\theoremstyle{plain} 
\newtheorem{theorem}{Theorem}[section]
\newtheorem{lemma}[theorem]{Lemma}
\newtheorem{proposition}[theorem]{Proposition}
\newtheorem{corollary}[theorem]{Corollary}
\newtheorem*{theostar}{Theorem}
\newtheorem*{corostar}{Corollary}
\theoremstyle{definition} 
\newtheorem{example}[theorem]{Example}
\newtheorem{examples}[theorem]{Examples}
\newtheorem{definition}[theorem]{Definition}
\newtheorem{remark}[theorem]{Remark}
\newcommand{\Ker}{\mbox{\rm Ker\,}}
\newcommand{\End}[1]{\operatorname{\rm End}_{#1}}
\newcommand{\Hom}[1]{\operatorname{{\rm Hom}}_{#1}}
\newcommand{\Ext}[2]{\operatorname{\rm Ext}^{#1}_{#2}}
\newcommand{\rad}{\mbox{ \,\rm rad\,}}
\newcommand{\dimv}{\underline{\dim}\,}
\newcommand{\add}{\mbox{{\rm add \!}}}
\newcommand{\MOD}{\mbox{{\rm mod \!}}}
\newcommand{\fgmod}{\mbox{{\rm f.g.mod \!}}}
\newcommand{\ind}[1]{\operatorname{\rm ind}_{#1}}
\newcommand{\Gr}[1]{\mbox{{\rm Gr}}_{#1}}
\newcommand{\demo}[1]{\textsc{Proof.} #1 \hfill $\Box$ \bigskip}
\newcommand{\cA}{\mathcal{A}}
\newcommand{\cC}{\mathcal{C}}
\newcommand{\cE}{\mathcal{E}}
\newcommand{\cR}{\mathcal{R}}
\newcommand{\cU}{\mathcal{U}}
\newcommand{\cY}{\mathcal{Y}}
\newcommand{\bC}{\mathbb{C}}
\newcommand{\bN}{\mathbb{N}}
\newcommand{\bP}{\mathbb{P}}
\newcommand{\bQ}{\mathbb{Q}}
\newcommand{\bZ}{\mathbb{Z}}
\newcommand{\be}{\mathbf{e}}
\begin{document}

\title[A refined multiplication formula for cluster characters]{A refined multiplication formula for cluster characters}

\author{Bernhard Keller}
\address{B.~K.~: Universit\'e Paris de Paris\\
    UFR de Math\'ematiques\\
    CNRS\\
   Institut de Math\'ematiques de Jussieu--Paris Rive Gauche, IMJ-PRG \\   
    B\^{a}timent Sophie Germain\\
    75205 Paris Cedex 13\\
    France
}
\email{bernhard.keller@imj-prg.fr}
\urladdr{https://webusers.imj-prg.fr/~bernhard.keller/}

\author{Pierre-Guy Plamondon}
\address{P.-G.P.~: Laboratoire de Math\'ematiques de Versailles, UVSQ, CNRS, Universit\'e Paris-Saclay, Institut Universitaire de France (IUF)}
\email{pierre-guy.plamondon@uvsq.fr}
\urladdr{https://www.imo.universite-paris-saclay.fr/~plamondon/}

\author{Fan Qin}
\address{F.Q.: The School of Mathematical Sciences, Shanghai Jiao Tong University, 800 Dongchuan RD, Shanghai, 200240 China.}
\email{qin.fan.math@gmail.com}
\urladdr{https://sites.google.com/site/qinfanmath/}


\begin{abstract}
We obtain a multiplication formula for cluster characters on (stably) $2$-Calabi--Yau (Frobenius or) triangulated categories.  This formula generalizes those known for arbitrary pairs of objects and for Auslander--Reiten triangles.  As an application, we show that for cluster algebras of acyclic types, specialization of a cluster variable to~$1$ sends all cluster variables to elements of a cluster algebra of smaller rank.  We also obtain applications to the reduction of friezes of acyclic type.
\end{abstract}

\maketitle

\tableofcontents
\section{Introduction}

The additive categorification of cluster algebras has been an important tool in their study almost from their inception (see for instance the survey papers \cite{Kel09,Reiten10,Amiot11,Plamondon18}).  Such a categorification is given by a category~$\cC$ (usually triangulated or exact) and a \emph{cluster character} sending objects of~$\cC$ to Laurent polynomials in several variables so that suitable objects of~$\cC$ are sent to cluster variables in a cluster algebra.  The key property that a cluster character satisfies is a \emph{multiplication formula} which recovers the exchange relations in a cluster algebra.  Such formulas at various levels of generality have been obtained in \cite{CC06,CK08,CK06,GLS07,Palu08,FK09,DWZ09,Palu09,Plamondon09,XiaoXu2010,Xu2010,DG12,Rupel15,GLS18,CEFR21} and more.

The main result of this paper is a multiplication formula generalizing most previously known ones in the following context.  Let~$\cC$ be a small~$\Hom{}$-finite Krull--Schmidt~$2$-Calabi--Yau triangulated category over~$\bC$, together with a basic cluster tilting object~$T$ (definitions are recalled in Section~\ref{subs::recollections}).  Let
\[
 CC_T : Obj(\cC) \to \bZ[x_1^{\pm 1}, \ldots, x_n^{\pm 1}]
\]
be the corresponding cluster character (Definition~\ref{def::characters}).  For any objects~$L$ and~$M$ of~$\cC$, let~
\begin{displaymath}
	\beta_{L,M}:\Hom{\cC}(L, \Sigma M) \times \Hom{\cC}(M, \Sigma L) \longrightarrow k
\end{displaymath}
be the non-degenerate bifunctorial bilinear form conferring to~$\cC$ its~$2$-Calabi--Yau structure.  For an object~$Y$, let~$\Hom{\cC}(L,\Sigma M)_{\langle Y \rangle}$ be the set of those morphisms~$\varepsilon:L\to \Sigma M$ such that, if we have a triangle
\[
 M \xrightarrow{} Y' \xrightarrow{} L \xrightarrow{\varepsilon} \Sigma M,
\]
the objects~$Y$ and~$Y'$ have the same index (see Definition~\ref{defi::index}) and for each dimension vector~$\be$, the submodule Grassmannians~$\Gr{\be}(\Hom{\cC}(T,Y))$ and~$\Gr{\be}(\Hom{\cC}(T,Y'))$ have the same Euler characteristic.  It is easy to check that this set is invariant under multiplication by a non-zero scalar.  For a subset~$X$ of~$\Hom{\cC}(L,\Sigma M)$, let~$X_{\langle Y \rangle}$ be the intersection of~$X$ with~$\Hom{\cC}(L,\Sigma M)_{\langle Y \rangle}$. Let~$\cY_{L,M}$ be a set of representatives of equivalence classes for the equivalence relation defined by~$\Hom{\cC}(L,\Sigma M)_{\langle Y \rangle} = \Hom{\cC}(L,\Sigma M)_{\langle Y' \rangle}$.

Our main result is the following refined multiplication formula.

\begin{theostar}[\ref{theo::main}]
 Let~$\cC$ be a small~$\Hom{}$-finite Krull--Schmidt~$2$-Calabi--Yau triangulated category over~$\bC$ with constructible cones (see Section~\ref{subs::constructible}) together with a basic cluster tilting object~$T$.  Let~$L$ and~$M$ be objects of~$\cC$ such that~$\Hom{\cC}(L,\Sigma M)$ is non-zero.  Finally, let~$V$ be a non-zero vector subspace of~$\Hom{\cC}(L,\Sigma M)$. Then
 \[
  \chi(\bP V) CC_{T}(L)CC_{T}(M) = \sum_{Y\in \cY_{L,M}} \chi(\bP V_{\langle Y \rangle})CC_{T}(Y) + \sum_{Y\in \cY_{M,L}} \chi(\cR_{\langle Y\rangle})CC_{T}(Y),
 \]
 where~$\cR = \bP\Hom{\cC}(M,\Sigma L) \setminus \bP \Ker \beta_{L,M}(V,?)$.
\end{theostar}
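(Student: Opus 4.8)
The plan is to reduce the refined formula to the ordinary multiplication formula of Palu (\cite{Palu08}, in the form refined by the authors, e.g.\ \cite{Plamondon09}) by decomposing the relevant projective spaces according to the equivalence relations defining $\cY_{L,M}$ and $\cY_{M,L}$, and then matching up the pieces termwise. First I would recall the known (unrefined) multiplication formula, which in this context reads
\[
 \chi(\bP\Hom{\cC}(L,\Sigma M))\, CC_T(L) CC_T(M) = \sum_{[\varepsilon]} CC_T(Y_\varepsilon) \;+\; \sum_{[\eta]} CC_T(Y_\eta),
\]
where the first sum runs over $\bP\Hom{\cC}(L,\Sigma M)$ with $Y_\varepsilon$ the middle term of the triangle $M\to Y_\varepsilon\to L\xrightarrow{\varepsilon}\Sigma M$, and the second over $\bP\Hom{\cC}(M,\Sigma L)$ with $Y_\eta$ the middle term of $L\to Y_\eta\to M\xrightarrow{\eta}\Sigma L$. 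The key input making the right-hand sides \emph{finite} and well-defined is the constructibility of cones (Section~\ref{subs::constructible}): the assignments $\varepsilon\mapsto \mathrm{index}(Y_\varepsilon)$ and $\varepsilon\mapsto \chi(\Gr{\be}(\Hom{\cC}(T,Y_\varepsilon)))$ are constructible functions on $\bP\Hom{\cC}(L,\Sigma M)$, hence $CC_T(Y_\varepsilon)$ depends on $\varepsilon$ only through a constructible stratification, and $CC_T(Y_\varepsilon)$ is constant on each stratum. This is exactly what the sets $\Hom{\cC}(L,\Sigma M)_{\langle Y\rangle}$ encode, so the unrefined formula can be rewritten as $\sum_{Y\in\cY_{L,M}}\chi(\bP\Hom{\cC}(L,\Sigma M)_{\langle Y\rangle})CC_T(Y)$ plus the analogous sum over $\cY_{M,L}$.

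Next I would restrict to the subspace $V$. The point is that $\bP V$ is a constructible (indeed closed) subset of $\bP\Hom{\cC}(L,\Sigma M)$, so intersecting the strata above with $\bP V$ and taking Euler characteristics (which is additive and multiplicative in the relevant sense for constructible sets) yields
\[
 \chi(\bP V)\, CC_T(L) CC_T(M) = \sum_{Y\in\cY_{L,M}} \chi(\bP V_{\langle Y\rangle}) CC_T(Y) \;+\; (\text{boundary term}),
\]
where the boundary term accounts for the failure of the second sum to localize to $V$: in the unrefined proof the second family is indexed by \emph{all} of $\bP\Hom{\cC}(M,\Sigma L)$, dually paired against $\Hom{\cC}(L,\Sigma M)$, and the restriction to $V$ replaces $\bP\Hom{\cC}(M,\Sigma L)$ by the locus of $\eta$ that pair nontrivially with $V$ under $\beta_{L,M}$, i.e.\ by $\bP\Hom{\cC}(M,\Sigma L)\setminus \bP\Ker\beta_{L,M}(V,?)$, which is precisely $\cR$. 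Stratifying $\cR$ by the equivalence relation on $\cY_{M,L}$ gives the second sum $\sum_{Y\in\cY_{M,L}}\chi(\cR_{\langle Y\rangle})CC_T(Y)$. The bookkeeping that makes this work is the standard ``fibration over $\bP V$ with fibres $\bP(\text{something})$'' argument from the proof of the unrefined formula, combined with the multiplicativity $\chi(E)=\chi(B)\chi(F)$ for a Zariski-locally-trivial (or even just constructible) fibration $E\to B$ with fibre $F$.

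The main obstacle I expect is the careful handling of the geometry underlying the second sum and the appearance of $\cR$. In the unrefined multiplication formula one uses a precise description of the variety parametrizing the relevant extensions and the two maps from it — to $\bP\Hom{\cC}(L,\Sigma M)$ and to $\bP\Hom{\cC}(M,\Sigma L)$ — and computes $\chi$ of the total space in two ways; the subtlety when passing to $V$ is that the total space is cut out by the bilinear pairing $\beta_{L,M}$, so one must verify that the fibres of the map to $\bP V$ are exactly the projectivizations of the spaces $\Hom{\cC}(M,\Sigma L)/\Ker\beta_{L,M}(\varepsilon,?)$ for $\varepsilon\in V$, and that the Euler characteristic computation is compatible with the constructible stratification by index and Grassmannian Euler characteristics on \emph{both} factors simultaneously. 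Non-degeneracy and bifunctoriality of $\beta_{L,M}$ are used to identify $\Ker\beta_{L,M}(V,?)$ correctly and to ensure the relevant dimension counts are constant along strata. Once the total-space picture is set up and constructibility of cones is invoked to legitimize all the Euler characteristic manipulations, the formula follows by equating the two computations and collecting terms according to $\cY_{L,M}$ and $\cY_{M,L}$.
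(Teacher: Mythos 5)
Your proposal follows essentially the same route as the paper: one reruns Palu's incidence-variety argument with $\bP\Hom{\cC}(L,\Sigma M)$ replaced by $\bP V$ from the outset, splits the target of $([\varepsilon],E)\mapsto([\varepsilon],Fp(E),Fi^{-1}(E))$ into its image (affine fibres, yielding the first sum) and its complement (handled by the dual correspondence, whose projection to the $\eta$-side surjects exactly onto $\cR$), and uses constructibility of cones together with Palu's index lemma to collect terms over $\cY_{L,M}$ and $\cY_{M,L}$. The only caveat is that the theorem cannot literally be obtained from the unrefined formula by ``intersecting strata with $\bP V$'' as your second paragraph suggests, since that formula is a single numerical identity; but your third paragraph makes clear that you intend to redo the two-way Euler-characteristic computation of the incidence variety with $V$ built in, which is precisely what the paper does.
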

If~$V$ is the full space, then this formula specializes to the one proved in~\cite[Theorem 1.1]{Palu09}.
Our main result also has a counterpart for exact categories.

\begin{theostar}[\ref{theo::main-exact}]
  Let $\cE$ be an $\Ext{}{}$-finite 2-Calabi--Yau Frobenius category with a cluster tilting object $T$.  Assume that the triangulated category $\underline{\cE}$ has constructible cones. Let $L$ and $M$ be two objects of $\cE$ such that~$\Ext{1}{\cE}(L,M)$ is non-zero, and let $V$ be a non-zero vector subspace of $\Ext{1}{\cE}(L,M)$.  Then
\begin{displaymath}
  \chi(\bP V)CC_{T}(L)CC_{T}(M) = \sum_{Y\in \cY_{L,M}} \chi(\bP V_{\langle Y \rangle})CC_{T}(Y) + \sum_{Y\in \cY_{M,L}} \chi(\cR_{\langle Y \rangle})CC_{T}(Y).
\end{displaymath}
\end{theostar}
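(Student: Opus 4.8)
The plan is to deduce Theorem~\ref{theo::main-exact} from Theorem~\ref{theo::main} by passing to the stable category. Write $\underline{\cE}$ for the stable category of $\cE$ and, for an object $X$ of $\cE$, write $\bar X$ for its image in $\underline{\cE}$. Since $\cE$ is $\Ext{}{}$-finite and $2$-Calabi--Yau Frobenius, $\underline{\cE}$ is a $\Hom{}$-finite Krull--Schmidt $2$-Calabi--Yau triangulated category over $\bC$, with suspension $\Sigma$ the inverse syzygy, and by assumption it has constructible cones. The image $\bar T$ of the cluster tilting object $T$ is a cluster tilting object of $\underline{\cE}$, and it is basic once $T$ is; since passing to the basic version of $T$ does not change either side of the claimed identity, we may assume $T$ basic. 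Hence Theorem~\ref{theo::main} applies to $\underline{\cE}$, $\bar T$, $\bar L$, $\bar M$ and to the subspace $\bar V\subseteq\Hom{\underline{\cE}}(\bar L,\Sigma\bar M)$ corresponding to $V$ under the bifunctorial isomorphism $\Ext{1}{\cE}(L,M)\cong\Hom{\underline{\cE}}(\bar L,\Sigma\bar M)$; this isomorphism intertwines the two incarnations of the form $\beta$, so it also carries $\cR$ and the sets $\cR_{\langle\,\cdot\,\rangle}$ for $\cE$ to the corresponding objects for $\underline{\cE}$.

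It remains to translate the formula of Theorem~\ref{theo::main} for $\underline{\cE}$ into the asserted formula for $\cE$. Two comparisons are needed, both recalled in Section~\ref{subs::recollections}. First, a conflation $0\to M\to Y'\to L\to 0$ of $\cE$ with class $\varepsilon\in\Ext{1}{\cE}(L,M)$ induces a triangle $\bar M\to\bar Y'\to\bar L\xrightarrow{\bar\varepsilon}\Sigma\bar M$ of $\underline{\cE}$, every triangle on $\bar L,\Sigma\bar M$ arises this way, and $Y'$ is determined in $\cE$ by $\varepsilon$; so the middle terms occurring on the two sides are literally the same objects, viewed in $\cE$ or in $\underline{\cE}$. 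Second, the cluster character: $CC_T(X)$ equals the image of $CC_{\bar T}(\bar X)$ under the inclusion $\bZ[x_1^{\pm1},\dots,x_m^{\pm1}]\hookrightarrow\bZ[x_1^{\pm1},\dots,x_n^{\pm1}]$ ($m$ being the number of non-projective indecomposable summands of $T$), multiplied by a monomial $\bx^{c(X)}$ in the remaining ``coefficient'' variables, whose exponent vector $c(X)$ is computed by applying to $X$ functors that are \emph{exact} on $\cE$ (because the corresponding summands of $T$ are projective--injective). Consequently $c(Y')=c(L)+c(M)$ for the middle term of any conflation with end terms $L,M$ --- and symmetrically on the $(M,L)$ side --- so the coefficient monomial $\bx^{c(L)+c(M)}$ and the inclusion of rings factor out of \emph{both} sides of the identity, reducing it precisely to the formula of Theorem~\ref{theo::main} for $\underline{\cE}$. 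The same comparison shows that $\ind{T}(Y')$ and $\ind{T}(Y'')$ agree if and only if $\ind{\bar T}(\bar Y')$ and $\ind{\bar T}(\bar Y'')$ do (the coefficient parts being automatically equal), and, via the comparison of module categories over $\End{\cE}(T)$ and over $\End{\underline{\cE}}(\bar T)$, that the families of Euler characteristics $\bigl(\chi\,\Gr{\be}(\Hom{\cE}(T,-))\bigr)_{\be}$ and $\bigl(\chi\,\Gr{\be}(\Hom{\underline{\cE}}(\bar T,-))\bigr)_{\be}$ determine one another; hence the equivalence relations cutting out $\cY_{L,M}$, $\cY_{M,L}$ and the subsets $(\,\cdot\,)_{\langle Y\rangle}$ match on the two sides, and the two sums coincide term by term.

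The main obstacle is exactly this bookkeeping: keeping track of the projective--injective summands of $T$ (the source of the coefficient variables) and of the discrepancy between $\Hom{\cE}(T,-)$ and $\Hom{\underline{\cE}}(\bar T,-)$ as module functors, so that grouping middle terms by index and by Grassmannian Euler characteristics really is the same operation in $\cE$ and in $\underline{\cE}$. Should the module-category comparison be inconvenient to quote directly, an alternative is to avoid the reduction altogether and re-run the proof of Theorem~\ref{theo::main} inside $\cE$ with conflations in place of triangles: that argument uses only formal consequences of the $2$-Calabi--Yau structure (the six-term exact sequences obtained by applying $\Hom{\cE}(T,-)$ to a conflation, and the non-degeneracy and bifunctoriality of $\beta$), the defining properties of $CC_T$, constructibility of the relevant cones in $\underline{\cE}$, and the multiplicativity and additivity of Euler characteristics --- all of which hold unchanged in the Frobenius setting.
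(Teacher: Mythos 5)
Your primary route --- deducing the Frobenius case from Theorem~\ref{theo::main} applied to $\underline{\cE}$ --- breaks down at the claimed factorization of the cluster character. In the Frobenius setting the character is
\begin{displaymath}
  CC_{T}(X) \;=\; \prod_{i=1}^n x_i^{\langle FX, S_i \rangle_\tau} \sum_e \chi\big( \Gr{e}( \Ext{1}{\cE}(T,X) ) \big) \prod_{i=1}^n x_i^{\langle e, S_i \rangle_3},
\end{displaymath}
and the inner product runs over \emph{all} $n$ indecomposable summands of $T$, including the projective--injective ones. For such an index $i$ the exponent $\langle e, S_i\rangle_3$ is in general nonzero and depends on $e$: these exponents are exactly the rows of the extended exchange matrix attached to frozen vertices, and if they all vanished the categorified cluster algebras with coefficients (e.g.\ $\bC[G_{k,n}]$ from Example~\ref{exam::Frobenius}) would have trivial coefficients. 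Hence the frozen variables do \emph{not} factor out of the sum as a single monomial $\bx^{c(X)}$, so $CC_T(X)$ is not a monomial times the image of $CC_{\bar T}(\bar X)$ under a ring inclusion, and an identity of Laurent polynomials in the $m$ ``stable'' variables does not lift to the asserted identity in all $n$ variables. The subsidiary claim that $c(Y')=c(L)+c(M)$ for a conflation is also unjustified: $F=\Hom{\cE}(T,-)$ is only left exact on conflations, with cokernel starting at $\Ext{1}{\cE}(T,M)$, and $\langle -,S_i\rangle_\tau$ is a truncated Euler form, not additive on the resulting long exact sequence.

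Your fallback --- re-running the proof of Theorem~\ref{theo::main} inside $\cE$ with conflations in place of triangles --- is in fact the paper's actual proof, following \cite[Theorem 4.1]{Palu09} and \cite{FK09}. But the one genuinely new ingredient it requires, which your sketch does not isolate, is the Frobenius analogue of \cite[Lemma 5.1]{Palu08}, namely \cite[Lemma 3.4]{FK09}: for a conflation $M\to Y\to L$ and a submodule configuration with dimension vectors $(e,f,g)$, an identity of monomials of the form $\prod_i x_i^{\langle FL,S_i\rangle_\tau+\langle FM,S_i\rangle_\tau+\langle e+f,S_i\rangle_3}=\prod_i x_i^{\langle FY,S_i\rangle_\tau+\langle g,S_i\rangle_3}$ holding in all $n$ variables. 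That lemma is precisely what replaces the (false) additivity of the leading monomials and controls the frozen variables term by term; with it, the decomposition into the sets $W^{V,Y}_{L,M}(e,f,g)$ and the fiber computations carry over verbatim.
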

This generalizes a result of~\cite{FK09}.  We expect that the refined multiplication formula generalizes to the setting of suitable extriangulated categories such as the Higgs category of~\cite{Wu-Higgs}, in which the classical multiplication formula can be proved~\cite{KellerWu}.

We apply our main results to the specialization of cluster variables to~$1$.  Let~$Q$ be finite quiver without loops or~$2$-cycles and let~$Q'$ be the quiver obtained from~$Q$ by removing a vertex~$i$.  Let~$\sigma$ be the specialization of~$x_i$ at~$1$.  In the case where~$Q$ is mutation-equivalent to an acyclic quiver, it was proved in~\cite{ADS} that the image of the cluster algebra~$\cA_Q$ by~$\sigma$ is contained in~$\cA_{Q'}\otimes_{\bZ}\bQ$.  Using our refined multiplication formula, we can improve on this result.

\begin{corostar}[\ref{coro::specializationAcyclic}]
 Assume that~$Q$ is mutation-equivalent to an acyclic quiver.  Then the image of the cluster algebra~$\cA_Q$ by~$\sigma$ is~$\cA_{Q'}$.  
\end{corostar}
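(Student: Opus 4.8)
The plan is to pass to the additive categorification of $\cA_{Q}$ and to use the refined multiplication formula~\ref{theo::main} to track the specialization at the level of cluster characters. Since $Q$ is mutation-equivalent to an acyclic quiver, fix a $\Hom{}$-finite Krull--Schmidt $2$-Calabi--Yau triangulated category $\cC$ over $\bC$ with constructible cones, together with a basic cluster tilting object $T=T_{i}\oplus\bigoplus_{j\neq i}T_{j}$ whose endomorphism quiver is $Q$; then $\cA_{Q}$ is the $\bZ$-subalgebra of $\bZ[x_{1}^{\pm1},\ldots,x_{n}^{\pm1}]$ generated by the cluster characters $CC_{T}(M)$ of the indecomposable rigid objects $M$ reachable from $T$, with $CC_{T}(T_{j})=x_{j}$. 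The Iyama--Yoshino reduction of $\cC$ at the summand $T_{i}$ is again a category of this kind (with constructible cones); it carries a cluster tilting object $\overline{T/T_{i}}$ whose endomorphism algebra has quiver precisely the full subquiver $Q'$, so it categorifies $\cA_{Q'}$, and $\cA_{Q'}$ is generated by the corresponding cluster characters $CC_{T'}(N)$. It then suffices to prove the two inclusions $\cA_{Q'}\subseteq\sigma(\cA_{Q})$ and $\sigma(\cA_{Q})\subseteq\cA_{Q'}$.

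The inclusion $\cA_{Q'}\subseteq\sigma(\cA_{Q})$ is elementary and needs no categorification. Every cluster of $\cA_{Q'}$ is reached from the initial one by mutations performed at vertices $\neq i$; carrying out the same sequence in $\cA_{Q}$ is legitimate, and since the cluster variable at the vertex $i$ is never mutated it stays equal to $x_{i}$. Hence $\sigma$ sends the resulting seed of $\cA_{Q}$ to the corresponding seed of $\cA_{Q'}$: the variables at the vertices $\neq i$ go to those of $\cA_{Q'}$ (mutation at a vertex $\neq i$ commutes with deletion of the $i$-th row and column of the exchange matrix, and $\sigma(x_{i})=1$ makes the two exchange monomials agree), while the one at the vertex $i$ goes to $1$. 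Thus every cluster variable of $\cA_{Q'}$ lies in $\sigma(\cA_{Q})$, and these generate $\cA_{Q'}$.

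For the reverse inclusion it suffices to show $\sigma(CC_{T}(M))\in\cA_{Q'}$ for every object $M$ of $\cC$, and I would prove this by induction on $d(M):=\dim_{\bC}\Hom{\cC}(T_{i},\Sigma M)$. If $d(M)>0$, choose a line $V=\langle\varepsilon\rangle$ in $\Hom{\cC}(T_{i},\Sigma M)$ and apply Theorem~\ref{theo::main} with $L=T_{i}$ and this $V$: since $\chi(\bP V)=1$ and $CC_{T}(T_{i})=x_{i}$, it becomes
\[
x_{i}\,CC_{T}(M)\;=\;CC_{T}(Y_{0})\;+\;\sum_{Y\in\cY_{M,T_{i}}}\chi\bigl(\cR_{\langle Y\rangle}\bigr)\,CC_{T}(Y),
\]
where $M\to Y_{0}\to T_{i}\xrightarrow{\varepsilon}\Sigma M$ is a triangle and $\cR=\bP\Hom{\cC}(M,\Sigma T_{i})\setminus\bP\Ker\beta_{T_{i},M}(\varepsilon,?)$. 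Applying $\Hom{\cC}(T_{i},-)$ to the triangles at hand and using that $T_{i}$ is rigid (so $\Hom{\cC}(T_{i},\Sigma T_{i})=0$) and that a class of $\cR$ pairs non-trivially with $\varepsilon$ under $\beta$ (so the relevant connecting map is nonzero), one checks that $d(Y_{0})<d(M)$ and that $d(Y)<d(M)$ for every $Y$ occurring in the sum. Applying $\sigma$ and using $\sigma(x_{i})=1$ together with the integrality of the Euler characteristics, the induction hypothesis gives $\sigma(CC_{T}(M))\in\cA_{Q'}$. The base case $d(M)=0$, i.e.\ $\Hom{\cC}(T_{i},\Sigma M)=0$ (equivalently $M\in T_{i}^{\perp_{1}}$, which by $2$-Calabi--Yau duality also means $\Hom{\cC}(M,\Sigma T_{i})=0$), is treated separately: $M$ descends to an object $\overline{M}$ of the reduced category, and one identifies $\sigma(CC_{T}(M))$ with an explicit integral combination of cluster characters there, by comparing the cluster-character formula on $\cC$ with the one on the reduction, whose only discrepancy is the $x_{i}$-dependence that $\sigma$ suppresses.

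The main obstacle is this base case, together with the control of integrality throughout. Already in rank one, $\sigma$ of a cluster variable need not be a single cluster character of $\cA_{Q'}$: for $Q\colon 1\to 2$ and $i=1$ one has $\sigma(\tfrac{1+x_{2}}{x_{1}})=1+x_{2}=CC_{T'}(0)+CC_{T'}(\overline{T_{2}})$ and $\sigma(\tfrac{1+x_{1}}{x_{2}})=\tfrac{2}{x_{2}}=CC_{T'}(\Sigma\overline{T_{2}})$, the coefficient $2$ marking where iterating ordinary exchange relations (as in~\cite{ADS}) only yields membership in $\cA_{Q'}\otimes_{\bZ}\bQ$; and the comparison in the base case genuinely spreads $CC_{T}(M)$ into several terms whenever $\Hom{\cC}(T_{i},M)\neq0$. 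What forces the sharper statement is that the structure constants in Theorem~\ref{theo::main} are honest Euler characteristics, hence integers, so that the recursion above and the base-case comparison express $\sigma\circ CC_{T}$ through \emph{integral} combinations of cluster characters on the reduction --- elements of $\cA_{Q'}$ --- rather than merely placing $\sigma(\cA_{Q})$ in the larger upper cluster algebra of $Q'$, which is all one could extract from the reversed form of the elementary argument when $Q'$ itself fails to be mutation-equivalent to an acyclic quiver.
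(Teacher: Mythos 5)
Your strategy coincides with the paper's: the elementary mutation argument gives $\cA_{Q'}\subseteq\sigma(\cA_Q)$, and the reverse inclusion is attacked by induction on $\dim\Hom{\cC}(T_i,\Sigma M)$, applying Theorem~\ref{theo::main} with $L=T_i$ and a one-dimensional $V$, the base case being handled via Iyama--Yoshino reduction (this is exactly the proof of Theorem~\ref{theo::cc-specialization} combined with Corollary~\ref{coro::specialization-cluster-equal-upper}). But there is a genuine gap at the point your last paragraph only gestures at. Your induction, even granting the base-case comparison, shows at best that $\sigma(CC_T(M))$ is an integral combination of cluster characters $CC_{T'}(N)$ of \emph{arbitrary} objects $N$ of the reduced category $\cC'$, i.e.\ that it lies in the Caldero--Chapoton algebra $\cA_{\cC'}$ --- not that it lies in $\cA_{Q'}$. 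The objects $Y_0$ and $Y$ produced by the multiplication formula, and the object $\overline{M}$ in the base case, are in general neither rigid nor reachable, so their characters are not cluster monomials, and your repeated assertion ``$\in\cA_{Q'}$'' is never justified; integrality of the Euler characteristics does not supply it. The missing ingredient is precisely the inclusion $\cA_{\cC'}\subseteq\cA_{Q'}$, which the paper gets from two external inputs: cluster characters of all objects lie in the upper cluster algebra $\cU_{Q'}$ \cite{Plamondon10}, and $\cA_{Q'}=\cU_{Q'}$ because $Q'$ is again of acyclic mutation type \cite{BFZ05}. Without this bridge your argument only proves $\sigma(\cA_Q)\subseteq\cA_{\cC'}$ (indeed $\sigma(\cA_{\cC})=\cA_{\cC'}$, which is Theorem~\ref{theo::cc-specialization}), and it is exactly here, not in the recursion, that the acyclicity hypothesis is used.

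Two smaller points. First, your description of the base case is at odds with what is actually needed and true: when $\Hom{\cC}(T_i,\Sigma M)=0$ the paper uses the single identity $\sigma(CC_T(M))=CC_{T'}(\pi M)$; your claim that the comparison ``genuinely spreads'' into several terms whenever $\Hom{\cC}(T_i,M)\neq0$ is not supported by your own example, since the variable $\tfrac{1+x_2}{x_1}$ is the character of an object with $\Hom{\cC}(T_1,\Sigma M)\neq 0$, hence is treated by the inductive step, not the base case (for instance $M=S_1$ has $\Hom{\cC}(T_1,M)\neq 0$ yet specializes to the single character $2/x_2$). Second, you should justify that the function $d$ is constant on the classes in $\cY_{M,T_i}$ (equal Euler characteristics of all submodule Grassmannians force equal dimension vectors of $FY$, since the top Grassmannian is a point), and that the reduced category again has constructible cones; both are minor but needed for the induction to make sense.
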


More generally, we have the following results.

\begin{corostar}[\ref{coro::specialization-cluster-equal-upper}]
 Assume that the quiver~$Q$ admits a non-degenerate Jacobi-finite potential.  If the upper cluster algebra~$\cU_{Q'}$ is equal to the cluster algebra~$\cA_{Q'}$, then~$\sigma(\cA_Q) = \cA_{Q'}$.
\end{corostar}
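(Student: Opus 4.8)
The plan is to establish the two inclusions $\cA_{Q'}\subseteq\sigma(\cA_Q)$ and $\sigma(\cA_Q)\subseteq\cA_{Q'}$ separately, using the hypothesis $\cU_{Q'}=\cA_{Q'}$ only for the second one, and only in the mild form $\cU_{Q'}=\bigcap_{\mathbf{z}'}\bZ[\mathbf{z}'^{\pm 1}]$, the intersection being over all clusters $\mathbf{z}'$ of $\cA_{Q'}$.

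The starting observation is purely combinatorial: deleting the vertex $i$ commutes with quiver mutation at any vertex $j\neq i$. Indeed, for a skew-symmetric exchange matrix $B$ and $j\neq i$, the entries of $\mu_j(B)$ indexed by vertices $\neq i$ depend only on the entries of $B$ indexed by vertices $\neq i$, because the mutation rule at $j$ involves only the entries $B_{kl}$, $B_{kj}$, $B_{jl}$ with $k,l\neq i$. Since the vertices of $Q'$ are exactly the vertices of $Q$ other than $i$, any cluster $\mathbf{z}'$ of $\cA_{Q'}$ is reached from the initial seed by some sequence of mutations at vertices $\neq i$; applying the same sequence to the initial seed of $\cA_Q$ and arguing by induction on its length — using the exchange relations together with the fact that the $x_i$-entry is never mutated, hence sent to $1$ by $\sigma$ — one checks that $\sigma$ carries the resulting cluster of $\cA_Q$ to $\mathbf{z}'$ in the entries $\neq i$ (and to $1$ in the entry $i$). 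In particular every cluster variable of $\cA_{Q'}$ equals $\sigma(z)$ for some cluster variable $z$ of $\cA_Q$, whence $\cA_{Q'}\subseteq\sigma(\cA_Q)$.

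For the reverse inclusion it suffices, by the hypothesis, to show that $\sigma(f)\in\bZ[\mathbf{z}'^{\pm 1}]$ for every $f\in\cA_Q$ and every cluster $\mathbf{z}'$ of $\cA_{Q'}$. Fix $\mathbf{z}'$ and lift it as above to a cluster $\mathbf{z}=(z_1,\ldots,z_n)$ of $\cA_Q$ with $z_i=x_i$, so that $\sigma(z_i)=1$ and $\sigma(z_j)=z'_j$ for $j\neq i$. By the Laurent phenomenon for $\cA_Q$ one writes $f=F(z_1,\ldots,z_n)$ for an integral Laurent polynomial $F$. Here one must treat $\sigma$ as a partially defined map on the ambient field and observe that each $z_j$ and each $z_j^{-1}$ is regular at $x_i=1$ with non-zero value there (namely $z'_j$ for $j\neq i$, and $1$ for $j=i$); granting this, $\sigma$ commutes with the substitution and $\sigma(f)=F(z'_1,\ldots,z'_{i-1},1,z'_{i+1},\ldots,z'_n)\in\bZ[\mathbf{z}'^{\pm 1}]$. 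Letting $\mathbf{z}'$ vary gives $\sigma(\cA_Q)\subseteq\cU_{Q'}=\cA_{Q'}$, and combining the two inclusions proves $\sigma(\cA_Q)=\cA_{Q'}$.

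The genuine difficulty is concentrated in the inclusion $\sigma(\cA_Q)\subseteq\cA_{Q'}$: without assuming $\cU_{Q'}=\cA_{Q'}$ the argument above only places the image inside the upper cluster algebra, and to bring it back inside $\cA_{Q'}$ one must instead analyse $\sigma(CC_T(M))$ for each rigid object $M$ of the cluster category of $(Q,W)$ — whose existence relies on the Jacobi-finiteness of the potential — which is exactly where the refined multiplication formula of Theorem~\ref{theo::main} is used, as in Corollary~\ref{coro::specializationAcyclic}. Under the present hypothesis that difficulty is bypassed, and I expect the only delicate routine steps to be the inductive compatibility of $\sigma$ with mutation at vertices $\neq i$ and the bookkeeping needed to treat $\sigma$ as a partial ring map.
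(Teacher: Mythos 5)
Your proof is correct, but it takes a genuinely different route from the paper's. The paper deduces the corollary from Theorem~\ref{theo::cc-specialization} via the Caldero--Chapoton algebra: the sandwich $\cA_{Q'}\subseteq\cA_{\cC'}\subseteq\cU_{Q'}$ together with the hypothesis forces $\cA_{Q'}=\cA_{\cC'}$, and then $\cA_{Q'}\subseteq\sigma(\cA_Q)\subseteq\sigma(\cA_{\cC})=\cA_{\cC'}=\cA_{Q'}$, where the middle equality is Theorem~\ref{theo::cc-specialization} and hence ultimately the refined multiplication formula. You instead prove the hard inclusion $\sigma(\cA_Q)\subseteq\cU_{Q'}$ directly from the Laurent phenomenon, by lifting each cluster of $\cA_{Q'}$ to a cluster of $\cA_Q$ containing $x_i$ and specializing; in effect you show $\sigma(\cA_Q)\subseteq\sigma(\cU_Q)\subseteq\cU_{Q'}$, and the inclusion $\sigma(\cU_Q)\subseteq\cU_{Q'}$ is exactly the elementary observation the paper itself invokes in the proof of Corollary~\ref{coro::specializationUpper} (``every cluster for $Q'$ is the image of some cluster for $Q$ under $\sigma$''). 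Your argument is more elementary: it bypasses the categorification entirely and never uses the hypothesis that $Q$ admits a non-degenerate Jacobi-finite potential, so it in fact establishes a slightly stronger statement. What the paper's route buys is the intermediate equality $\sigma(\cA_{\cC})=\cA_{\cC'}$, which holds without assuming $\cA_{Q'}=\cU_{Q'}$ and is what is really needed for Corollaries~\ref{coro::specializationAcyclic} and~\ref{coro::specializationUpper}; for the present corollary the categorical machinery is not strictly necessary, as your argument shows. The two routine steps you defer --- the inductive compatibility of $\sigma$ with mutation at vertices $j\neq i$ (which uses that the exchange matrix entries indexed by vertices $\neq i$ mutate among themselves, and that division by $\sigma(z_j)=z'_j\neq 0$ is legitimate in the ambient field) and the extension of $\sigma$ to the localization of $\bZ[x_1^{\pm1},\dots,x_n^{\pm1}]$ at the prime $(x_i-1)$, in which every lifted cluster variable is a unit, so that $\sigma$ commutes with the substitution $f=F(\mathbf{z})$ --- are both standard and go through as you indicate.
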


\begin{corostar}[\ref{coro::specializationUpper}]
 Assume that the quiver~$Q$ admits a non-degenerate Jacobi-finite potential.  If the upper cluster algebra~$\cU_{Q'}$ is spanned by the cluster characters of objects of the associated generalized cluster category~$\cC$, then~$\sigma(\cU_Q) = \cU_{Q'}$.
\end{corostar}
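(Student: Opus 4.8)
The plan is to prove the equality of the two upper cluster algebras by establishing the inclusions $\sigma(\cU_Q)\subseteq\cU_{Q'}$ and $\cU_{Q'}\subseteq\sigma(\cU_Q)$ separately. Fix a non-degenerate Jacobi-finite potential on $Q$, let $\cC$ be its generalized cluster category, with basic cluster tilting object $T=\bigoplus_j T_j$ normalized so that $CC_T(T_j)=x_j$, and let $\cC'={}^{\perp}(\Sigma T_i)/(T_i)$ be the Iyama--Yoshino reduction of $\cC$ at the rigid summand $T_i$, with reduction functor $\pi\colon{}^{\perp}(\Sigma T_i)\to\cC'$. Then $\cC'$ is again a $\Hom{}$-finite Krull--Schmidt $2$-Calabi--Yau category, $T':=\pi(T/T_i)$ is a basic cluster tilting object of $\cC'$, the exchange matrix of $(\cC',T')$ is obtained from that of $(\cC,T)$ by deleting the $i$-th row and column, and (up to $2$-cycles) $\cC'$ is the generalized cluster category attached to $Q'$; thus $\cC'$ categorifies $\cA_{Q'}$ and $\cU_{Q'}$, every $CC_{T'}(N)$ lies in $\cU_{Q'}$, and $\pi$ is essentially surjective. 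The spanning hypothesis is that $\cU_{Q'}$ is generated as a $\bZ$-module by the elements $CC_{T'}(N)$, $N$ ranging over the objects of $\cC'$.

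The technical heart, which I would establish first, is the intertwining identity
\[
 \sigma\bigl(CC_T(M)\bigr)=CC_{T'}(\pi M)\qquad\text{for every object }M\text{ of }{}^{\perp}(\Sigma T_i).
\]
My plan for this is an induction along mutation sequences in which the refined multiplication formula of Theorem~\ref{theo::main} is applied inside $\cC$ to appropriate exchange data; after specializing $x_i$ at $1$, the contributions $CC_T(Y)$ coming from triangles that genuinely involve the summand $T_i$ collapse, and what survives is precisely the multiplication formula that governs $CC_{T'}$ in $\cC'$. This is exactly where the refinement is needed: choosing the subspace $V$ appropriately and grouping the middle objects $Y$ by the equivalence relation defining $\cY_{L,M}$, so that $CC_T(Y)$ depends only on the class of $Y$, is what makes the specialized identity match term by term, and in particular controls the denominators --- the point on which \cite{ADS} obtained a statement only over $\bQ$. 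Applied to a rigid object $M$ reached from $T$ by mutations avoiding the index $i$ (such an $M$ is a summand of the corresponding mutation of $T$, hence lies in ${}^{\perp}(\Sigma T_i)$, and $\pi M$ is the corresponding reachable rigid object of $\cC'$ since mutation commutes with reduction), the identity yields the by-product that $\sigma$ sends each non-$i$ entry of the cluster of $\cA_Q$ reached by a mutation sequence $\nu$ not involving $i$ to the corresponding entry of the cluster of $\cA_{Q'}$ reached by $\nu$.

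Granting this, the inclusion $\sigma(\cU_Q)\subseteq\cU_{Q'}$ becomes the statement that clusters of $\cA_{Q'}$ lift to clusters of $\cA_Q$: a seed of $\cA_{Q'}$ is reached by some $\nu$, which automatically avoids $i$; performing $\nu$ on the initial seed of $\cA_Q$ gives a seed whose $i$-th cluster variable is still $x_i$, whose exchange matrix restricts to that of the $\nu$-seed of $\cA_{Q'}$ (mutation at $j\ne i$ commutes with deleting row and column $i$), and whose remaining cluster variables are carried by $\sigma$ onto the $\nu$-seed of $\cA_{Q'}$ by the by-product above. Writing $p\in\cU_Q$ as a Laurent polynomial in this seed and applying $\sigma$ (which sends $x_i$ to $1$) exhibits $\sigma(p)$ as a Laurent polynomial in the $\nu$-seed of $\cA_{Q'}$; as $\nu$ is arbitrary, $\sigma(p)\in\cU_{Q'}$. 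For the reverse inclusion, essential surjectivity of $\pi$ gives, for every object $N$ of $\cC'$, an object $M$ of ${}^{\perp}(\Sigma T_i)$ with $\pi M\cong N$, whence $CC_{T'}(N)=\sigma(CC_T(M))\in\sigma(\cU_Q)$ because $CC_T(M)\in\cU_Q$; by the spanning hypothesis the $CC_{T'}(N)$ generate $\cU_{Q'}$ over $\bZ$, and $\sigma(\cU_Q)$ is a subring of $\bZ[x_j^{\pm 1}:j\ne i]$, hence closed under $\bZ$-linear combinations, so $\cU_{Q'}\subseteq\sigma(\cU_Q)$. Combining the two inclusions gives $\sigma(\cU_Q)=\cU_{Q'}$.

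The step I expect to be hardest is the intertwining identity: one must ensure that every object of ${}^{\perp}(\Sigma T_i)$ is reachable in a way that lets the induction proceed, pin down exactly which middle terms of the refined formula survive specialization and check that they are precisely the ones that appear on the $\cC'$-side, and verify that the index and Euler-characteristic data encoded in $\cY_{L,M}$ are compatible with the reduction $\pi$. Once this is settled, the first inclusion is a routine lifting argument and the second is immediate from the spanning hypothesis.
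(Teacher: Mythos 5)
Your overall architecture coincides with the paper's: the inclusion $\sigma(\cU_Q)\subseteq\cU_{Q'}$ comes from the fact that every cluster of $\cA_{Q'}$ is reached by a mutation sequence avoiding $i$ and lifts to a cluster of $\cA_Q$ whose non-$i$ variables specialize correctly, and the reverse inclusion comes from the chain $\cU_{Q'}\subseteq\cA_{\cC'}\subseteq\sigma(\cA_{\cC})\subseteq\sigma(\cU_Q)$, using the spanning hypothesis, the universal Laurent property $\cA_{\cC}\subseteq\cU_Q$, and the essential surjectivity of the reduction functor $\pi$. Both inclusions are argued correctly, and you rightly observe that only the easy inclusion $\cA_{\cC'}\subseteq\sigma(\cA_{\cC})$ of Theorem~\ref{theo::cc-specialization} is needed here.

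The gap is in your plan for the intertwining identity $\sigma(CC_T(M))=CC_{T'}(\pi M)$, which you call the technical heart. An induction along mutation sequences cannot establish it for every object $M$ of ${}^{\perp}(\Sigma T_i)$: only the rigid reachable objects arise by mutating $T$, whereas your reverse inclusion needs the identity for arbitrary $M$, since $\pi$ must hit every object $N$ of $\cC'$, most of which are not rigid, let alone reachable. (You flag this reachability issue yourself; it is fatal to the method, not a detail to be checked.) Moreover the refined multiplication formula is not the tool for this identity and is not how the paper obtains it: for $M\in{}^{\perp}(\Sigma T_i)$ the identity is a direct compatibility of Iyama--Yoshino reduction with indices and submodule Grassmannians (the index of $\pi M$ with respect to $T'$ is the truncation of $\ind{T}M$, and the module $\Hom{\cC'}(T',\Sigma \pi M)$ is identified with $FM$), which the paper takes as known in the base case of the proof of Theorem~\ref{theo::cc-specialization}, following \cite{ADS}. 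The refined formula genuinely enters elsewhere, namely in the induction on $\dim\Hom{\cC}(T_i,\Sigma X)$ for objects $X$ \emph{outside} the perpendicular category, which yields the harder inclusion $\sigma(\cA_{\cC})\subseteq\cA_{\cC'}$ --- an inclusion this particular corollary does not require. Replacing your mutation-induction by the direct reduction compatibility (or a citation for it) repairs the argument; as written, the central step of your proposal would not go through.
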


Note that in the above results the variable that gets specialized to~$1$ is not frozen. 

Our formula also finds applications in the reduction of friezes.  A \emph{frieze} is ring morphism~$f:\cA_Q\to \bZ$ that sends all cluster variables to positive integers.  Friezes originated in work of Conway and Coxeter \cite{Coxeter71,ConwayCoxeter73}, but have been vastly generalized using cluster algebras, see for instance \cite{BaurMarsh09,ARS10,BFGST,MG19} and the survey paper \cite{MG15}.  Our result on friezes is the following.

\begin{corostar}[\ref{coro::reductionFriezes}]
 Let~$Q$ be an acyclic quiver without loops or~$2$-cycles, let~$Q'$ be the quiver obtained by removing the vertex~$i$ in~$Q$, and let~$\sigma:\cA_Q \to \cA_{Q'}$ be the specialization of~$x_i$ to~$1$.  Let~$f':\cA_{Q'}\to \bZ$ be a frieze.  Then there exists a unique frieze~$f:\cA_Q \to \bZ$ such that~$f'\circ \sigma = f$.  
\end{corostar}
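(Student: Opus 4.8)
The plan is to set $f:=f'\circ\sigma$ and to check that this ring morphism $\cA_Q\to\bZ$ is a frieze; uniqueness is then automatic, since the condition $f'\circ\sigma=f$ pins down $f$ completely. The one nontrivial ingredient is Corollary~\ref{coro::specializationAcyclic}: because $Q$ is acyclic we have $\sigma(\cA_Q)=\cA_{Q'}$, so $\sigma$ restricts to a ring morphism $\cA_Q\to\cA_{Q'}$ and the composite $f=f'\circ\sigma\colon\cA_Q\to\bZ$ is a well-defined ring morphism. Without this one would not even know that $\sigma$ maps $\cA_Q$ into $\cA_{Q'}$, so $f'$ could not be applied; I therefore regard invoking~\ref{coro::specializationAcyclic} (hence ultimately the refined multiplication formula) as the only substantial point, everything else being a formal argument at the level of cluster algebras.

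That formal argument is the following elementary propagation statement, which I would prove by induction along the exchange graph: \emph{if $g\colon\cA_Q\to\bZ$ is a ring morphism whose values on the cluster variables of some seed are positive integers, then $g$ is a frieze.} Indeed, if $(y_1,\dots,y_n)$ is a cluster with all $g(y_j)\in\bZ_{>0}$ and $(y_1,\dots,y_k^{\ast},\dots,y_n)$ is obtained from it by mutation at $k$, then applying $g$ to the exchange relation $y_ky_k^{\ast}=\prod_j y_j^{[b_{jk}]_+}+\prod_j y_j^{[-b_{jk}]_+}$ yields $g(y_k)\,g(y_k^{\ast})=\prod_j g(y_j)^{[b_{jk}]_+}+\prod_j g(y_j)^{[-b_{jk}]_+}\ge 2$; since $y_k^{\ast}\in\cA_Q$ forces $g(y_k^{\ast})\in\bZ$ and $g(y_k)>0$, we get $g(y_k^{\ast})\in\bZ_{>0}$, so the mutated cluster again has all $g$-values positive. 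Integrality of $g$ on every cluster variable is automatic, as $g$ is defined on all of $\cA_Q$; only positivity has to propagate, and it does because $\cA_Q$ carries trivial coefficients, so the exchange relations are sums of two monomials with nonnegative exponents not involving $y_k$. Since the exchange graph is connected, $g$ is positive on all cluster variables.

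It remains to evaluate $f=f'\circ\sigma$ on the initial cluster $(x_1,\dots,x_n)$ of $\cA_Q$: one has $f(x_i)=f'(\sigma(x_i))=f'(1)=1>0$, while for $j\ne i$ one has $\sigma(x_j)=x_j$, a cluster variable of $\cA_{Q'}$, so $f(x_j)=f'(x_j)>0$ as $f'$ is a frieze. Thus $f$ is positive on the initial cluster, hence a frieze by the propagation statement; and any frieze $f$ satisfying $f'\circ\sigma=f$ equals $f'\circ\sigma$, which gives uniqueness. I do not expect any genuine obstacle beyond being careful with the convention that $\cA_Q$ and $\cA_{Q'}$ are the cluster algebras with trivial coefficients, which is what makes the exchange relations have the positive form used above.
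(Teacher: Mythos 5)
Your proposal is correct, and it follows the paper's proof up to the last step: both arguments define $f:=f'\circ\sigma$, both rest on Corollary~\ref{coro::specializationAcyclic} to know that $\sigma$ maps $\cA_Q$ into $\cA_{Q'}$ so that $f$ is a well-defined ring morphism $\cA_Q\to\bZ$, and both observe that integrality is then automatic and only positivity on cluster variables needs checking. Where you diverge is in how you get positivity. The paper invokes the positivity theorem of Lee and Schiffler: every cluster variable of $\cA_Q$ is a Laurent polynomial with nonnegative coefficients in the initial cluster variables, and since $f$ sends those to positive integers ($f(x_i)=f'(1)=1$ and $f(x_j)=f'(x_j)>0$ for $j\neq i$), it sends every cluster variable to a positive integer. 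You instead prove an elementary propagation lemma: a ring morphism $g:\cA_Q\to\bZ$ that is positive on one cluster is positive on all of them, because applying $g$ to the coefficient-free exchange relation $y_ky_k^{\ast}=\prod_j y_j^{[b_{jk}]_+}+\prod_j y_j^{[-b_{jk}]_+}$ gives $g(y_k)g(y_k^{\ast})\geq 2$ with $g(y_k)>0$ and $g(y_k^{\ast})\in\bZ$, forcing $g(y_k^{\ast})\in\bZ_{>0}$; connectedness of the exchange graph (which holds by definition of the clusters as the mutation orbit of the initial seed) then finishes the induction. Your route has the advantage of being entirely elementary and independent of the deep positivity theorem, and it makes transparent that the only serious input is Corollary~\ref{coro::specializationAcyclic}; the paper's route is shorter once positivity is taken as known. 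Both are complete proofs, and your handling of uniqueness (the equation $f=f'\circ\sigma$ determines $f$ outright) is if anything cleaner than the paper's.
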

The non-trivial part of the above result is the existence.

Finally, in Section~\ref{subs::AR}, we give a new proof of a multiplication formula for Auslander--Reiten triangles first obtained in~\cite{DG12}, and in Section~\ref{subs::DX10}, we obtain a formula reminiscent of the one stated in~\cite{DX10}.

\section{Refined multiplication formula: triangulated case}

\subsection{Recollections on $2$-Calabi--Yau triangulated categories}
The setting in which the multiplication formula holds is that of $\Hom{}$-finite, Krull-Schmidt, triangulated, $2$-Calabi--Yau categories with a cluster tilting object and constructible cones.  The aim of this section is to recall the main definitions and properties of this setting.

\subsubsection{$2$-Calabi--Yau categories}\label{subs::recollections}
Let $\cC$ be a small $\Hom{}$-finite triangulated category over a field $k$, with suspension functor $\Sigma$.  
\begin{definition}
The category $\cC$ is \emph{$2$-Calabi--Yau} if, for any objects $L$ and $M$ of $\cC$, it is equipped with a bilinear form
\begin{displaymath}
	\beta_{L,M}:\Hom{\cC}(L, \Sigma M) \times \Hom{\cC}(M, \Sigma L) \longrightarrow k
\end{displaymath}
which is non-degenerate and bifunctorial.  Here, bifunctorial means that if $L$, $M$, $N$ and $P$ are objects of $\cC$, and if $\varepsilon\in\Hom{\cC}(M, \Sigma N)$, $\eta\in \Hom{\cC}(N,\Sigma L)$, $\delta\in \Hom{\cC}(P, \Sigma M)$, $f\in\Hom{\cC}(L,M)$ and $g\in\Hom{\cC}(N,P)$, then
\begin{eqnarray*}
	\beta_{L,N}(\varepsilon\circ f, \eta) &=& \beta_{M,N}(\varepsilon, \Sigma f\circ \eta) \quad \textrm{and} \\
	\beta_{M,P}(\Sigma g \circ \varepsilon, \delta) &=& \beta_{M,N}(\varepsilon, \delta\circ g).
\end{eqnarray*}
\end{definition}

Equivalently, $\cC$ is $2$-Calabi--Yau if it  is equipped with an isomorphism of bifunctors
\begin{displaymath}
	\Hom{\cC}(L, \Sigma M) \longrightarrow D\Hom{\cC}(M, \Sigma L),
\end{displaymath}
where $D=\Hom{k}(?,k)$ is the usual duality for vector spaces.

\subsubsection{Cluster-tilting objects and associated cluster characters}
Let $\cC$ be a $\Hom{}$-finite $2$-Calabi--Yau triangulated category. 
\begin{definition}[\cite{BMRRT06}]
 An object $T$ of $\cC$ is a \emph{cluster-tilting object} if the following hold:
\begin{enumerate}
	\item $T$ is \emph{rigid}, that is, the space $\Hom{\cC}(T, \Sigma T)$ vanishes;
	\item for any object $X$ of $\cC$, if $\Hom{\cC}(T, \Sigma X)$ vanishes, then $X$ lies in $\add T$ (that is, $X$ is a direct factor of a direct sum of copies of $T$).
\end{enumerate}
\end{definition}
We will usually assume that $T$ is \emph{basic}, and write $T=T_1\oplus \ldots \oplus T_n$, where the $T_i$'s are pairwise non-isomorphic indecomposable objects.

\begin{examples}
 \begin{enumerate}
  \item The cluster categories of \cite{BMRRT06} are triangulated $\Hom{}$-finite $2$-Calabi--Yau categories with a cluster-tilting object.
  \item The generalized cluster categories of \cite{Amiot08} also have these properties.
  \item The stable categories of all the Frobenius categories of Example \ref{exam::Frobenius} also have these properties.
 \end{enumerate}

\end{examples}

Cluster-tilting objects are essential in the categorification of cluster algebras via triangulated categories.  This is done via \emph{cluster characters}, whose definition we recall in Definition \ref{def::characters}.

\begin{proposition}[\cite{KR07}]\label{prop::KellerReiten}
	Let $T$ be a basic cluster-tilting object of $\cC$.
	\begin{enumerate}
		\item The functor $F=\Hom{\cC}(T, \Sigma ?)$ induces an equivalence of categories
		\begin{displaymath}
			\cC/(T) \xrightarrow{F} \MOD \End{\cC}(T),
		\end{displaymath}
		where $(T)$ is the ideal of all morphisms factoring through an object of $\add T$.

		\item Any object $X$ of $\cC$ sits in a triangle
		\begin{displaymath}
			T_1^X \rightarrow T_0^X \rightarrow X \rightarrow \Sigma T_1^X,
		\end{displaymath}
		where $T_1^X$ and $T_0^X$ lie in $\add T$.
	\end{enumerate}
\end{proposition}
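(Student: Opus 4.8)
The plan is to obtain part~(2) directly from the existence of $\add T$-approximations together with the rigidity of~$T$, and then to bootstrap part~(1) from it. Throughout, write $\Lambda=\End{\cC}(T)$; since $\cC$ is $\Hom{}$-finite and $T$ has finitely many pairwise non-isomorphic indecomposable summands, $\Lambda$ is a finite-dimensional $k$-algebra, and $\MOD\Lambda$ denotes the category of finite-dimensional (equivalently, finitely presented) $\Lambda$-modules.

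\emph{Part~(2).} By $\Hom{}$-finiteness, every object $X$ admits a right $\add T$-approximation $p\colon T^X_0\to X$; completing $p$ to a triangle gives
\begin{displaymath}
 T^X_1 \to T^X_0 \xrightarrow{p} X \to \Sigma T^X_1 .
\end{displaymath}
Applying $\Hom{\cC}(T,-)$ and rotating once produces an exact sequence
\begin{displaymath}
 \Hom{\cC}(T,T^X_0) \to \Hom{\cC}(T,X) \to \Hom{\cC}(T,\Sigma T^X_1) \to \Hom{\cC}(T,\Sigma T^X_0).
\end{displaymath}
The first arrow is surjective because $p$ is an $\add T$-approximation, while the last term vanishes because $T$ is rigid and $T^X_0\in\add T$; hence $\Hom{\cC}(T,\Sigma T^X_1)=0$, and the second defining property of a cluster-tilting object forces $T^X_1\in\add T$.

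\emph{Part~(1).} The functor $F=\Hom{\cC}(T,\Sigma-)$ annihilates every morphism factoring through $\add T$, since after applying $\Sigma$ it factors through $\add\Sigma T$ and $\Hom{\cC}(T,\Sigma T)=0$; hence $F$ descends to $\bar F\colon\cC/(T)\to\MOD\Lambda$. On $\add(\Sigma^{-1}T)$ the functor $F$ agrees, up to the shift, with the Yoneda functor $\Hom{\cC}(T,-)|_{\add T}$, so it restricts to an equivalence $\add(\Sigma^{-1}T)\xrightarrow{\sim}\proj\Lambda$ with $F(\Sigma^{-1}T)=\Lambda$. Now:
\emph{Density}: given $N\in\MOD\Lambda$ with a presentation $P_1\xrightarrow{g}P_0\to N\to 0$ by finitely generated projectives, write $P_i=F(\Sigma^{-1}Q_i)$ with $Q_i\in\add T$, lift $g$ to $\tilde g\colon\Sigma^{-1}Q_1\to\Sigma^{-1}Q_0$, and complete $\tilde g$ to a triangle $\Sigma^{-1}Q_1\xrightarrow{\tilde g}\Sigma^{-1}Q_0\to X\to Q_1$; applying $\Hom{\cC}(T,-)$, rotating, and using $\Hom{\cC}(T,\Sigma Q_1)=0$ identifies $FX$ with $\Coker g=N$.
\emph{Faithfulness}: if $f\colon X\to Y$ satisfies $Ff=0$, use the triangle $T^Y_1\to T^Y_0\xrightarrow{v}Y\xrightarrow{w}\Sigma T^Y_1$ of $Y$ from part~(2); for every $h\colon T^Y_1\to\Sigma X$, bifunctoriality of $\beta$ gives $\beta_{X,T^Y_1}(w\circ f,h)=\beta_{Y,T^Y_1}(w,\Sigma f\circ h)$, and $\Sigma f\circ h=0$ since $T^Y_1$ is a direct summand of some $T^{\oplus m}$ and post-composition with $\Sigma f$ on $\Hom{\cC}(T^{\oplus m},\Sigma X)$ is $(Ff)^{\oplus m}=0$; non-degeneracy of $\beta$ then gives $w\circ f=0$, so $f$ factors through $v$, i.e.\ $f\in(T)$.
\emph{Fullness}: for $\phi\colon FX\to FY$, use $\add T$-approximations to build two-term complexes $\Sigma^{-1}Q_1^X\to\Sigma^{-1}Q_0^X\to X$ and $\Sigma^{-1}Q_1^Y\to\Sigma^{-1}Q_0^Y\to Y$ whose images under $F$ are projective presentations of $FX$ and $FY$, lift $\phi$ to a morphism of these two-term complexes via the equivalence $\add(\Sigma^{-1}T)\simeq\proj\Lambda$, and extend it to a morphism $f\colon X\to Y$; then $Ff=\phi$.

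The main obstacle is the fullness step. Density and faithfulness are clean, and the $2$-Calabi--Yau structure enters only in faithfulness, to turn the vanishing of $Ff$ into that of $w\circ f$. Fullness, however, requires passing back and forth between triangles in $\cC$ and projective presentations over $\Lambda$ while keeping track that $F$ is fully faithful only on $\add(\Sigma^{-1}T)$: extending the lifted morphism of presentations to a morphism $f\colon X\to Y$ is where the octahedral axiom and a careful diagram chase (and the cluster-tilting property, to identify certain cones as lying in $\add T$) are needed. An alternative route for the whole of part~(1) is to first show that the additive quotient $\cC/(T)$ is an abelian category in which $\Sigma^{-1}T$ is a projective generator, and then invoke the Gabriel--Mitchell theorem: it yields the equivalence $\cC/(T)\simeq\MOD\Lambda$ directly, realized by $\Hom{\cC/(T)}(\Sigma^{-1}T,-)=\bar F$.
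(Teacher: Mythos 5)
The paper offers no proof of this proposition --- it is imported verbatim from Keller--Reiten \cite{KR07} --- so the only question is whether your reconstruction is sound, and it is: your argument for part~(2) is exactly the standard one (approximation triangle, long exact sequence, rigidity, then the second cluster-tilting axiom), and your treatment of part~(1) via density, faithfulness and fullness is a correct rendering of the Keller--Reiten proof. Two remarks on efficiency. First, your faithfulness step does not actually need the $2$-Calabi--Yau form $\beta$: if $Ff=0$ then $f\circ p=0$ for the map $p\colon \Sigma^{-1}Q_0^X\to X$ coming from the triangle $\Sigma^{-1}Q_1^X\to\Sigma^{-1}Q_0^X\xrightarrow{p}X\xrightarrow{q}Q_1^X$ (since $p$ is a component of a morphism from $\add\Sigma^{-1}T$), so $f$ factors through $q$ and hence through $Q_1^X\in\add T$; this is the original argument and shows the equivalence holds without Serre duality. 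Second, your fullness step is less delicate than you fear: once the lifted square in $\add(\Sigma^{-1}T)$ is known to commute in $\cC$ (because $F$ is faithful there), axiom (TR3) alone completes it to a morphism of triangles $f\colon X\to Y$, and $Ff=\phi$ because both induce the same map between the cokernels of the two projective presentations --- no octahedral axiom or further diagram chase is required. The one point worth making explicit is that the presentations must come from approximation triangles of $\Sigma X$ and $\Sigma Y$ (equivalently, triangles $\Sigma^{-1}Q_1\to\Sigma^{-1}Q_0\to X\to Q_1$ with $Q_i\in\add T$), since $F=\Hom{\cC}(T,\Sigma?)$; you have this right in the density step, and the same triangles serve for faithfulness and fullness.
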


\begin{definition}[\cite{DK08}]\label{defi::index}
	Let $T$ be a basic cluster-tilting object of $\cC$.  The \emph{index} of an object $X$ of $\cC$ \emph{with respect to $T$} is the element of the Grothendieck group $K_0(\add T)$ defined by
	\begin{displaymath}
		\ind{T}X = [T_0^X] - [T_1^X],
	\end{displaymath}
where $T_0^X$ and $T_1^X$ are as in Proposition \ref{prop::KellerReiten}(2).
\end{definition}
Note that, while the triangle in Proposition \ref{prop::KellerReiten}(2) is not unique, the index of $X$ does not depend on the one we choose \cite[Lemma 2.1]{Palu08}.  Moreover, it was shown in \cite{Palu08} (in the proof of Lemma 1.3) that for any object $X$ of $\cC$, the value of $\ind{T}\Sigma X + \ind{T}X$ only depends on the dimension vector $e$ of $FX$.  We will denote this value by $\iota(e)$.  Note that~$\iota$ extends to a linear map defined on all of~$\bZ^n$.

Let us now assume that the field $k$ is the field $\bC$ of complex numbers.

\begin{definition}[\cite{CC06}\cite{Palu08}]\label{def::characters}
	Let $T$ be a basic cluster-tilting object of $\cC$.  The \emph{cluster character associated with $T$} is the map 
	\begin{displaymath}
		CC_{T}:Obj(\cC) \longrightarrow \bQ(x_1, \ldots, x_n)
	\end{displaymath}
	defined by 
	\begin{displaymath}
		CC_{T}(M) = x^{\ind{T}M}\sum_{e\in \bN^n} \chi \Big( \Gr{e}\big(FM   \big)  \Big)x^{-\iota(e)},
	\end{displaymath}
	where
	\begin{itemize}
		\item $n$ is the number of indecomposable direct factors of $T$ in a decomposition $T = \bigoplus_{i=1}^n T_i$;
		\item $x^a = x_1^{a_1} \cdots x_n^{a_n}$, for any $a = \sum_{i=1}^n a_i[T_i] \in K_0(\add T)$;
		\item $\chi$ is the Euler characteristic for topological spaces;
		\item $FM=\Hom{\cC}(T, \Sigma M)$ is considered as a right module over $\End{\cC}(T)$;
		\item for any module $R$, $\Gr{e}(R)$ is the submodule Grassmannian \cite[Section 2.3]{CC06}, a projective variety whose points parametrize the submodules of $R$ of dimension vector $e$;
		\item $\iota(e)$ is as defined below Definition~\ref{defi::index}.
	\end{itemize}
\end{definition}

\subsubsection{Constructible cones}\label{subs::constructible}

The coefficients in the multiplication formula are Euler characteristics of subsets of certain algebraic varieties.  For the formula to be well-defined, we must ensure that the Euler characteristics of these subsets are well-defined integers.  In \cite{Palu09}, this is done by proving that the subsets in question are constructible.  In order to do so, we need to assume that the category $\cC$ \emph{has constructible cones}.  Although we will not recall the definition of a category with consctructible cones (and simply refer to \cite[Section 1.3]{Palu09}), we will list the properties of such categories that we will need.

Let $\cC$ be a $\Hom{}$-finite triangulated category with a basic cluster-tilting object $T$.  Fix two objects $L$ and $M$.  For any object $Y$ of $\cC$, let $\Hom{\cC}(L, \Sigma M)_{\langle Y \rangle}$ be the subset of $\Hom{\cC}(L, \Sigma M)$ of all morphisms $\varepsilon$ such that if
\begin{displaymath}
  M \rightarrow Y' \rightarrow L \stackrel{\varepsilon}{\rightarrow} \Sigma M
\end{displaymath}
is a triangle, then 
\begin{itemize}
  \item $\ind{T} Y' = \ind{T} Y$, and
  \item for all dimension vectors $e$, we have  $\chi(\Gr{e}(FY))=\chi(\Gr{e}(FY'))$.
\end{itemize}
For any subset $V$ of $\Hom{\cC}(L, \Sigma M)$, let $V_{\langle Y \rangle}$ be the intersection of $V$ with $\Hom{\cC}(L, \Sigma M)_{\langle Y \rangle}$.

Note that the condition
\begin{displaymath}
  \Hom{\cC}(L, \Sigma M)_{\langle Y \rangle} = \Hom{\cC}(L, \Sigma M)_{\langle Y' \rangle}
\end{displaymath}
induces an equivalence relation on the set of objects of $\cC$.  Let $\cY_{L,M}$ be a set of representatives for this equivalence relation.

\begin{proposition}[Proposition 2.8 of \cite{Palu09}]
 If $\cC$ has constructible cones, then
 \begin{displaymath}
   \Hom{\cC}(L, \Sigma M) = \coprod_{Y\in \cY_{L,M}}\Hom{\cC}(L, \Sigma M)_{\langle Y \rangle}
 \end{displaymath}
is a partition of $\Hom{\cC}(L, \Sigma M)$ into a finite number of constructible subsets.
\end{proposition}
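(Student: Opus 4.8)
The plan is to separate the statement into a formal part — that the displayed decomposition is a set-theoretic partition — and a geometric part, namely its finiteness and the constructibility of its pieces, which is where the hypothesis of constructible cones enters. \emph{For the partition,} I would first observe that the subset $\Hom{\cC}(L,\Sigma M)_{\langle Y\rangle}$ depends on $Y$ only through the pair of invariants $\bigl(\ind{T}Y,\ \bigl(\chi(\Gr{e}(FY))\bigr)_{e}\bigr)$, which is immediate from the definition. Given $\varepsilon\in\Hom{\cC}(L,\Sigma M)$, fix a triangle $M\to Y_\varepsilon\to L\xrightarrow{\varepsilon}\Sigma M$, so that $Y_\varepsilon$ is the desuspension of a cone of $\varepsilon$, well defined up to (non-unique) isomorphism; since the index and the Grassmannian Euler characteristics are isomorphism invariants, the condition ``$\varepsilon\in\Hom{\cC}(L,\Sigma M)_{\langle Y\rangle}$'' is equivalent to ``$Y$ and $Y_\varepsilon$ have equal index and equal Grassmannian Euler characteristics''. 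In particular $\varepsilon\in\Hom{\cC}(L,\Sigma M)_{\langle Y_\varepsilon\rangle}$, so these subsets cover $\Hom{\cC}(L,\Sigma M)$; and if $\varepsilon$ lay in $\Hom{\cC}(L,\Sigma M)_{\langle Y\rangle}\cap\Hom{\cC}(L,\Sigma M)_{\langle Y'\rangle}$ with $Y,Y'\in\cY_{L,M}$, then $Y$ and $Y'$ would share the invariants of $Y_\varepsilon$, hence $\Hom{\cC}(L,\Sigma M)_{\langle Y\rangle}=\Hom{\cC}(L,\Sigma M)_{\langle Y'\rangle}$, hence $Y\sim Y'$ and therefore $Y=Y'$, both being representatives. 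Moreover all objects with empty associated subset form a single $\sim$-class. This already yields a disjoint union in which at most one term is empty.

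\emph{Reduction to a single map.} Applying the homological functor $\Hom{\cC}(T,-)$ to the triangle above produces an exact sequence of $\End{\cC}(T)$-modules
\[
\Hom{\cC}(T,L)\longrightarrow FM\longrightarrow FY_\varepsilon\longrightarrow FL\longrightarrow \Hom{\cC}(T,\Sigma^2 M),
\]
hence a short exact sequence $0\to C_\varepsilon\to FY_\varepsilon\to S_\varepsilon\to 0$ in which $C_\varepsilon$ is a quotient module of the fixed module $FM$ and $S_\varepsilon$ a submodule of the fixed module $FL$. Therefore $\dimv FY_\varepsilon\le\dimv FM+\dimv FL$ componentwise, so $\chi(\Gr{e}(FY_\varepsilon))$ vanishes for all but the finitely many $e$ with $e\le\dimv FM+\dimv FL$. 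Consequently the data of $Y_\varepsilon$ relevant to the construction reduce to the single map
\[
c:\Hom{\cC}(L,\Sigma M)\longrightarrow K_0(\add T)\times\bZ^{N},\qquad \varepsilon\longmapsto\Bigl(\ind{T}Y_\varepsilon,\ \bigl(\chi(\Gr{e}(FY_\varepsilon))\bigr)_{e\le\dimv FM+\dimv FL}\Bigr),
\]
for a suitable finite $N$, and by the first paragraph the non-empty subsets among the $\Hom{\cC}(L,\Sigma M)_{\langle Y\rangle}$, $Y\in\cY_{L,M}$, are exactly the (non-empty) fibres of $c$. So it suffices to show that $c$ has finite image and constructible fibres.

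\emph{Constructibility.} The space $\Hom{\cC}(L,\Sigma M)$ is a finite-dimensional $\bC$-vector space, hence a variety of finite type, and the hypothesis that $\cC$ has constructible cones is precisely the assertion that each of the finitely many component functions of $c$ — namely $\varepsilon\mapsto\ind{T}Y_\varepsilon$ and $\varepsilon\mapsto\chi(\Gr{e}(FY_\varepsilon))$ — is constructible on it (see \cite[Section~1.3]{Palu09}). A constructible function on such a variety takes finitely many values, each on a constructible subset, and a common refinement of finitely many stratifications into constructible subsets is again such a stratification; applying this to the components of $c$ shows that $c$ has finite image and constructible fibres, which together with the previous paragraph proves the proposition.

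\emph{Main obstacle.} The only genuinely non-formal ingredient is the constructibility of the assignment $\varepsilon\mapsto Y_\varepsilon$, i.e.\ that the index and the Grassmannian Euler characteristics of the third term of a triangle vary constructibly with the connecting morphism; this is exactly the content of the hypothesis ``$\cC$ has constructible cones'', whose verification for cluster categories and generalized cluster categories is the technical heart of \cite{Palu09}, and which I would simply invoke. The partition argument, the bound $\dimv FY_\varepsilon\le\dimv FM+\dimv FL$, and the passage from constructible functions to a finite constructible stratification are all routine.
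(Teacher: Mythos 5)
This statement is not proved in the paper at all: it is imported verbatim as Proposition~2.8 of \cite{Palu09}, and the paper deliberately treats ``constructible cones'' as a black box, referring to \cite[Section~1.3]{Palu09} even for the definition. Your argument is a sound reconstruction of the proof along the same lines as Palu's. The partition part is correct (well-definedness of the invariants of $Y_\varepsilon$ up to the non-unique isomorphism of cones, the covering $\varepsilon\in\Hom{\cC}(L,\Sigma M)_{\langle Y_\varepsilon\rangle}$, disjointness via representatives, and the single class of objects with empty associated subset). The reduction to finitely many dimension vectors is also correct, and you even use the right long exact sequence for the paper's convention $F=\Hom{\cC}(T,\Sigma ?)$, so that $FY_\varepsilon$ is an extension of a submodule of $FL$ by a quotient of $FM$ and $\chi(\Gr{e}(FY_\varepsilon))$ vanishes outside the box $e\le \dimv FM+\dimv FL$; this is exactly what makes the comparison of the full tuple of Euler characteristics equivalent to comparison of a finite tuple, and identifies the nonempty sets $\Hom{\cC}(L,\Sigma M)_{\langle Y\rangle}$ with the fibres of your map $c$.

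The one point you should adjust is the claim that having constructible cones is ``precisely'' the assertion that $\varepsilon\mapsto\ind{T}Y_\varepsilon$ and $\varepsilon\mapsto\chi(\Gr{e}(FY_\varepsilon))$ are constructible functions. That is not the definition in \cite{Palu09}: there the hypothesis is formulated in terms of the cones themselves (roughly, a finite partition of $\Hom{\cC}(L,\Sigma M)$ into constructible pieces on each of which the cone can be taken constant up to isomorphism, with suitable constructibility of the accompanying data). The constructibility of your two numerical functions is a \emph{consequence} of that definition, because the index and the submodule-Grassmannian Euler characteristics are isomorphism invariants of the cone; deriving the finite constructible partition from the actual definition is the content of Palu's Proposition~2.8. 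If you take your reformulation as the definition, the proposition becomes nearly tautological and you have silently changed the hypothesis. Stated instead as ``the hypothesis implies that these invariant functions are constructible, by \cite[Section~1.3]{Palu09}'', your proof is complete and matches the intended argument.
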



\begin{corollary}
If $\cC$ has constructible cones, and if $V$ is a constructible subset of $\Hom{\cC}(L, \Sigma M)$, then
\begin{displaymath}
 V = \coprod_{Y\in \cY_{L,M}}V_{\langle Y \rangle}
\end{displaymath}
is a decomposition of $V$ into a finite number of pairwise disjoint constructible subsets.
\end{corollary}

\begin{example}
 All the triangulated categories mentioned in this paper have constructible cones, thanks to these two facts proved in \cite[Sections 2.4-2.5]{Palu09}: stable categories of $\Hom{}$-finite Frobenius categories and the generalized cluster categories of \cite{Amiot08} have constructible cones.
\end{example}

\subsection{The refined multiplication formula}

This section is devoted to the proof of the refined multiplication formula.  The proof follows the lines of \cite{Palu09} and relies heavily on results obtained there.

For any vector space $E$ and for any subset $U$ which is stable by scalar multiplication, we denote by $\bP U$ the subset of the projective space $\bP E$ consisting of elements $[u]$ with $u\in U$, where~$[u]$ denotes the class of~$u$ in~$\bP E$.

\begin{theorem}\label{theo::main}
Let $\cC$ be a $\Hom{}$-finite Krull--Schmidt $2$-Calabi--Yau triangulated category over $\bC$ with constructible cones and admitting a basic cluster-tilting object $T$.  Let $L$ and $M$ be two objects of $\cC$, and let $V$ be a non-zero vector subspace of $\Hom{\cC}(L, \Sigma M)$.  Then the following equality holds:
\begin{displaymath}
	\chi(\bP V) CC_{T}(L)CC_{T}(M) = \sum_{Y\in \cY_{L,M}} \chi(\bP V_{\langle Y \rangle})CC_{T}(Y) + \sum_{Y\in \cY_{M,L}} \chi(\cR_{\langle Y\rangle})CC_{T}(Y),
\end{displaymath} 
where $\cR=\bP\Hom{\cC}(M,\Sigma L) \setminus \bP\Ker \beta_{L,M}(V,?)$. 
\end{theorem}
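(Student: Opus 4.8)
The plan is to follow the strategy of Palu's proof of the classical multiplication formula~\cite{Palu09}, but to carry along the extra vector subspace~$V$ throughout and keep track of the finer stratification it induces. First I would recall the two ``dual'' triangles attached to a morphism: for~$\varepsilon\in\Hom{\cC}(L,\Sigma M)$ the triangle~$M\to Y_\varepsilon'\to L\xrightarrow{\varepsilon}\Sigma M$, and for~$\eta\in\Hom{\cC}(M,\Sigma L)$ the triangle~$L\to Y_\eta''\to M\xrightarrow{\eta}\Sigma L$. The key input from~\cite{Palu09} is the computation of the product~$CC_T(L)CC_T(M)$ as a sum over dimension vectors of Euler characteristics of certain fiber varieties over~$\Hom{\cC}(L,\Sigma M)$ (and, dually, over~$\Hom{\cC}(M,\Sigma L)$), together with the comparison of submodule Grassmannians of~$FY_\varepsilon'$ with those of~$FL\oplus FM$. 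I would set up the incidence varieties exactly as there but restricted to~$\bP V$ on the first side, and to~$\cR=\bP\Hom{\cC}(M,\Sigma L)\setminus\bP\Ker\beta_{L,M}(V,?)$ on the second side; the appearance of~$\cR$ is forced because the non-degeneracy of~$\beta_{L,M}$ identifies the ``trivial'' directions one must excise: a morphism~$\eta$ contributes via the~$V$-side precisely when~$\beta_{L,M}(V,\eta)\neq 0$, i.e. when~$[\eta]\notin\bP\Ker\beta_{L,M}(V,?)$.

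Next I would stratify both projective spaces using the partition of Proposition 2.8 of~\cite{Palu09}: write~$\bP V=\coprod_{Y\in\cY_{L,M}}\bP V_{\langle Y\rangle}$ and~$\cR=\coprod_{Y\in\cY_{M,L}}\cR_{\langle Y\rangle}$, which are finite disjoint unions of constructible sets by the Corollary quoted above, so all the Euler characteristics in sight are well-defined integers. On each stratum~$\bP V_{\langle Y\rangle}$ the index of~$Y_\varepsilon'$ and all the Euler characteristics~$\chi(\Gr{e}(FY_\varepsilon'))$ are constant and equal to those of the representative~$Y$; this is exactly the content of the definition of~$\Hom{\cC}(L,\Sigma M)_{\langle Y\rangle}$, and it lets me replace the fiberwise data by~$CC_T(Y)$ times the Euler characteristic of the stratum. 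The same reasoning applies on the~$\cR_{\langle Y\rangle}$-side. Summing over~$e\in\bN^n$ and over the strata, and using the additivity of~$\chi$ on constructible partitions, should assemble the two sums on the right-hand side, each weighted by the correct Euler characteristic~$\chi(\bP V_{\langle Y\rangle})$ respectively~$\chi(\cR_{\langle Y\rangle})$.

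The main obstacle is the bookkeeping at the level of the incidence varieties: one must check that introducing the linear subspace~$V$ on one side and the complement~$\cR$ on the other is exactly what makes the total Euler-characteristic count balance, i.e. that the ``long exact sequence / dimension count'' identity relating~$\Gr{e}(FY_\varepsilon')$, $\Gr{e}(FL)$, $\Gr{e}(FM)$ and the relevant~$\mathrm{Hom}$-spaces from~\cite{Palu09} still produces~$\chi(\bP V)CC_T(L)CC_T(M)$ on the left and not some other multiple. Concretely, the factor~$\chi(\bP V)$ on the left arises because, after projectivizing, the fiber of the relevant map over a point of~$\Gr{e}(FL)\times\Gr{e'}(FM)$ that previously had Euler characteristic~$\chi(\bP\Hom)$ now has Euler characteristic~$\chi(\bP V)$ plus a correction living over~$\cR$; verifying that this correction is precisely the~$\cR$-sum, via the non-degenerate pairing~$\beta_{L,M}$, is the technical heart. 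I would handle it by adapting Palu's dévissage verbatim, replacing every occurrence of the full~$\mathrm{Hom}$-space by~$V$ and tracking the kernel of~$\beta_{L,M}(V,?)$; once~$V=\Hom{\cC}(L,\Sigma M)$ one recovers~\cite[Theorem 1.1]{Palu09}, which is the consistency check that the constants are right. The Krull--Schmidt hypothesis and constructibility of cones are used, as in~\cite{Palu09}, to guarantee finiteness of the strata and constructibility of all subsets involved, hence well-definedness of every~$\chi$.
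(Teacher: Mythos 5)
Your proposal takes essentially the same route as the paper: both adapt Palu's d\'evissage, restricting the incidence varieties to $\bP V$ on the $\Hom{\cC}(L,\Sigma M)$-side, identifying the complement of the image of the fibration with the locus $\cR$ via the non-degenerate pairing (using Palu's surjectivity and affine-fiber results), and invoking constructibility of cones together with the $\langle Y\rangle$-stratification to assemble the two sums. The paper's proof is precisely this adaptation of \cite{Palu09}, so your plan matches it.
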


\begin{remark}
  If $V$ is the whole space $\Hom{\cC}(L, \Sigma M)$, then the formula recovers that of Y.~Palu \cite[Theorem 1.1]{Palu09}.
\end{remark}

We assume for the rest of this section that $\cC$ has constructible cones.  

The first step into proving the formula is by replacing $CC_{T}(L)$ and $CC_{T}(M)$ by their definitions in the left-hand side of the formula.  Doing this, we get
\begin{IEEEeqnarray*}{rCl}
	\IEEEeqnarraymulticol{3}{l}{\chi (\bP V)CC_{T}(L)CC_{T}(M)}\\ 
	 \ &=& \chi (\bP V)\Big( x^{\ind{T}L}\sum_{e}\chi\big( \Gr{e}(FL) \big) x^{-\iota(e)} \Big) \Big(x^{\ind{T}M} \sum_{f}\chi\big( \Gr{f}(FM) \big)x^{-\iota(f)}   \Big) \\
				&=& x^{\ind{T}(L\oplus M)}\sum_{e,f}\chi\big(\bP V \times  \Gr{e}(FL) \times \Gr{f}(FM) \big) x^{-\iota(e+f)}.
\end{IEEEeqnarray*}
We will refine this sum by replacing $\bP V \times \Gr{e}(FL)\times\Gr{f}(FM)$ by another constructible set with the same Euler characteristic.  Let us construct this set.

Define $W_{L,M}^{V}$ to be the subset of $\bP V \times \coprod_{d,g} \prod_{i=1}^n \Gr{g_i}(\bC^{d_i})$ consisting of pairs $([\varepsilon], E)$ where $E$ is a subrepresentation of $FY$, where $Y$ is the middle term of a triangle $\xymatrix{M \ar[r]^i & Y\ar[r]^p & L\ar[r]^\varepsilon & \Sigma M}$.

Furthermore, define
\begin{IEEEeqnarray*}{rCl}
	W_{L,M}^{V}(e,f,g) &=& \{ ([\varepsilon], E)\in W_{L,M}^{V} \ | \ \dimv E = g, \dimv Fp(E) = e, \dimv F i^{-1}(E) = f \} ; \\
	W_{L,M}^{V}(e,f) &=& \{ ([\varepsilon], E)\in W_{L,M}^{V} \ | \ \dimv Fp(E) = e, \dimv F i^{-1}(E) = f \} ; \\
	W_{L,M}^{V,Y}(e,f,g) &=& \{([\varepsilon], E)\in W_{L,M}^{V} \ | \ \varepsilon \in \bP V_{\langle Y \rangle}, \dimv E = g, \dimv Fp(E) = e, \dimv F i^{-1}(E) = f \};\\
	W_{L,M}^{V,Y}(e,f) &=& \{([\varepsilon], E)\in W_{L,M}^{V} \ | \ \varepsilon \in \bP V_{\langle Y \rangle}, \dimv Fp(E) = e, \dimv F i^{-1}(E) = f \}.
\end{IEEEeqnarray*}
Then $W_{L,M}^{V}$ and all the sets defined above are  finite disjoint unions of subsets of the form $W_{L,M}^{V,Y}(e,f,g)$.  Moreover, since we assumed that $\cC$ has constructible cones, then the results of Y.~Palu give us the following.

\begin{lemma}[Lemma 3.1 of \cite{Palu09}]\label{lemm::constructible}
	The sets $W_{L,M}^{V,Y}(e,f,g)$, $W_{L,M}^{V,Y}(e,f)$, $W_{L,M}^{V}(e,f)$, $W_{L,M}^{V}(e,f,g)$ and $W_{L,M}^{V}$ are constructible.
\end{lemma}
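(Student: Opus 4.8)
The plan is to deduce constructibility of all the sets $W_{L,M}^{V,Y}(e,f,g)$ etc.\ by reducing to the constructibility statement already established in \cite[Lemma 3.1]{Palu09} for the analogous sets attached to the \emph{full} space $\Hom{\cC}(L,\Sigma M)$, and then intersecting with $\bP V$ and with the pieces coming from the partition of Proposition 2.8 of \cite{Palu09}. Concretely, the starting point is the incidence variety over all of $\Hom{\cC}(L,\Sigma M)$: the set of pairs $(\varepsilon, E)$ with $E$ a subrepresentation of $FY$ for $Y$ the cone of $\varepsilon$, together with the locally closed conditions fixing the dimension vectors $\dimv E=g$, $\dimv Fp(E)=e$, $\dimv Fi^{-1}(E)=f$, and the stratum condition $\ind{T}Y$ fixed and all $\chi(\Gr{\bd}(FY))$ fixed (i.e.\ $Y\in\cY_{L,M}$). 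All of these are constructible in $\Hom{\cC}(L,\Sigma M)\times\coprod_{\bd,\bg}\prod_i\Gr{g_i}(\bC^{d_i})$ by Palu's results, since $\cC$ has constructible cones.

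The first step is to observe that passing from $\Hom{\cC}(L,\Sigma M)$ to $\bP V$ is harmless: the subset $V\setminus\{0\}$ is a (Zariski locally closed, hence) constructible cone-stable subset of $\Hom{\cC}(L,\Sigma M)\setminus\{0\}$, and the projection $\Hom{\cC}(L,\Sigma M)\setminus\{0\}\to\bP\Hom{\cC}(L,\Sigma M)$ restricts to a morphism of varieties $V\setminus\{0\}\to\bP V$ whose image is exactly $\bP V$; moreover all the conditions defining the $W$'s are invariant under rescaling $\varepsilon$ (the cone $Y$, the maps $i,p$ up to scalar, and hence the module $FY$ and the submodules $E$ are unchanged), so each $W$-set is the image under this morphism, in the first coordinate, of its preimage in the affine incidence variety. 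Since images of constructible sets under morphisms of varieties are constructible (Chevalley), and preimages of constructible sets are constructible, it suffices to check that the corresponding affine-cone versions $\widetilde W$ over $V\setminus\{0\}$ are constructible.

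The second step handles the affine-cone versions. The set $\widetilde W_{L,M}^{V}$ is the intersection of the affine incidence variety $\widetilde W_{L,M}^{\Hom{}}$ (constructible by \cite[Lemma 3.1]{Palu09}) with $(V\setminus\{0\})\times\coprod_{\bd,\bg}\prod_i\Gr{g_i}(\bC^{d_i})$, hence constructible. Then $\widetilde W_{L,M}^{V}(e,f,g)$ and $\widetilde W_{L,M}^{V}(e,f)$ are obtained by further intersecting with the loci where $\dimv E$, $\dimv Fp(E)$, $\dimv Fi^{-1}(E)$ take prescribed values; these loci are constructible in the ambient incidence variety — again this is exactly the content of \cite[Lemma 3.1]{Palu09}, as the relevant morphisms $E\mapsto Fp(E)$, $E\mapsto Fi^{-1}(E)$ vary algebraically over the base once one uses that $\cC$ has constructible cones to make $Y$, $i$ and $p$ depend constructibly on $\varepsilon$. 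Finally $\widetilde W_{L,M}^{V,Y}(e,f,g)$ and $\widetilde W_{L,M}^{V,Y}(e,f)$ are the intersections of the above with the preimage of $V_{\langle Y\rangle}\setminus\{0\}$ under the first projection; by Proposition 2.8 of \cite{Palu09}, $\Hom{\cC}(L,\Sigma M)_{\langle Y\rangle}$ is constructible, so $V_{\langle Y\rangle}\setminus\{0\}=(V\setminus\{0\})\cap\Hom{\cC}(L,\Sigma M)_{\langle Y\rangle}$ is constructible, and hence so is its preimage and the intersection. Pushing forward to $\bP V$ as in the first step gives the lemma.

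I expect the only real obstacle to be bookkeeping rather than mathematics: one must be careful that the families of modules $FY$ and of subquotients $Fp(E)$, $Fi^{-1}(E)$ genuinely form algebraic families over the base $\bP V$ (respectively $V\setminus\{0\}$), so that "prescribed dimension vector" cuts out a constructible locus. This is precisely what "constructible cones" buys us and what \cite[Lemma 3.1]{Palu09} verifies in the $V=\Hom{\cC}(L,\Sigma M)$ case; since $V$ is a linear subspace, restriction of an algebraic family is again an algebraic family, so no new input is needed and the proof is essentially a citation plus the elementary remarks that constructible sets are stable under finite intersection, under preimage along morphisms, and under direct image along morphisms of varieties.
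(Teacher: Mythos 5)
Your argument is correct and is essentially the paper's proof: both reduce everything to the constructibility of Palu's sets from \cite[Lemma 3.1]{Palu09} (together with Proposition 2.8 for the $\langle Y\rangle$-strata) and then use only that constructible sets are stable under finite intersections and unions, the point being that $\bP V\times\coprod_{d,g}\prod_i\Gr{g_i}(\bC^{d_i})$ is itself constructible because $V$ is a linear subspace. The only difference is your detour through affine cones and Chevalley's theorem, which is harmless but unnecessary, since Palu's sets are already defined inside $\bP\Hom{\cC}(L,\Sigma M)\times\coprod_{d,g}\prod_i\Gr{g_i}(\bC^{d_i})$ and one can intersect with $\bP V\times\cdots$ directly, as the paper does.
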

\demo{In \cite[Lemma 3.1]{Palu09}, it is shown that certain sets $W_{LM}^{Y}(e,f,g)$ are constructible.  Our sets $W_{L,M}^{V,Y}(e,f,g)$ are the intersection of these $W_{LM}^{Y}(e,f,g)$ with $\bP V \times \coprod_{d,g} \prod_{i=1}^n \Gr{g_i}(\bC^{d_i})$; thus they are constructible.  Since all the other sets are finite unions of sets of the form $W_{L,M}^{V,Y}(e,f,g)$, they must also be constructible.
}

Now, consider the constructible map
\begin{eqnarray*}
	\Psi_{L,M}(e,f): W_{L,M}^{V}(e,f) &\longrightarrow& \bP V \times \Gr{e}(FL) \times \Gr{f}(FM) \\
	([\varepsilon], E) &\longmapsto& ([\varepsilon], Fp(E), Fi^{-1}(E)).
\end{eqnarray*}
Let $L^V_1(e,f)$ be the image of this map; let $L_2^V(e,f)$ be the complement of the image.  Then $\chi(\bP V\times \Gr{e}(FL)\times \Gr{f}(FM)) = \chi(L_1^V(e,f)) + \chi(L_2^V(e,f))$.

Therefore our equation becomes

\begin{eqnarray*}
	(\star) \quad \chi(\bP V)CC_{T}(L)CC_{T}(M)  &=& x^{\ind{T}(L\oplus M)}\sum_{e,f}\chi\big( L_1^V(e,f) \big) x^{-\iota(e+f)} \\
	 && \quad + x^{\ind{T}(L\oplus M)}\sum_{e,f}\chi\big( L_2^V(e,f) \big) x^{-\iota(e+f)}.
\end{eqnarray*}
We will now study the two terms of the right-hand side of ($\star$).

\subsubsection*{The first term of the RHS of ($\star$)}

\begin{lemma}\label{lemm::term1}
We have an equality 
	\begin{displaymath}
		x^{\ind{T}(L\oplus M)}\sum_{e,f}\chi\big( L_1^V(e,f) \big) x^{-\iota(e+f)} = \sum_{Y\in\cY_{L,M}}\chi(\bP V_{\langle Y \rangle}) CC_{T}(Y).
	\end{displaymath}
\end{lemma}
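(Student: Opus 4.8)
The plan is to compute the left-hand side by organizing the sum over $(e,f)$ according to the isomorphism class (up to the equivalence relation defining $\cY_{L,M}$) of the middle term $Y$ of the triangle $M \to Y \to L \xrightarrow{\varepsilon} \Sigma M$ attached to each $[\varepsilon] \in \bP V$. Concretely, I would first decompose $L_1^V(e,f)$ as the disjoint union $\coprod_{Y \in \cY_{L,M}} L_1^{V,Y}(e,f)$, where $L_1^{V,Y}(e,f)$ is the image of $W_{L,M}^{V,Y}(e,f)$ under $\Psi_{L,M}(e,f)$; by Lemma~\ref{lemm::constructible} these are constructible, so taking Euler characteristics is legitimate and additive. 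Thus
\[
x^{\ind{T}(L\oplus M)}\sum_{e,f}\chi\big(L_1^V(e,f)\big)x^{-\iota(e+f)} = \sum_{Y\in\cY_{L,M}} x^{\ind{T}(L\oplus M)}\sum_{e,f}\chi\big(L_1^{V,Y}(e,f)\big)x^{-\iota(e+f)},
\]
and it suffices to identify the $Y$-summand with $\chi(\bP V_{\langle Y\rangle})\,CC_T(Y)$.

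The second step is the heart of the matter: fixing $Y \in \cY_{L,M}$, I would analyze the fibers of $\Psi_{L,M}(e,f)$ restricted to $W_{L,M}^{V,Y}(e,f)$. Over a point $[\varepsilon] \in \bP V_{\langle Y\rangle}$ with associated triangle $M \xrightarrow{i} Y_\varepsilon \xrightarrow{p} L \xrightarrow{\varepsilon} \Sigma M$, applying the functor $F = \Hom{\cC}(T,\Sigma?)$ gives an exact sequence of $\End{\cC}(T)$-modules $FM \xrightarrow{Fi} FY_\varepsilon \xrightarrow{Fp} FL$ (this is the key homological input from Palu's work, exactly as in \cite{Palu09}), and the pairs $([\varepsilon],E)$ with $E \subseteq FY_\varepsilon$, $\dimv Fp(E)=e$, $\dimv F i^{-1}(E)=f$ are precisely the preimages of $([\varepsilon], Fp(E), Fi^{-1}(E))$. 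The point is that the assignment $E \mapsto (Fp(E), Fi^{-1}(E))$ realizes, after passing to images, a stratified-fibration structure whose fibers are affine spaces; since $Y_\varepsilon$ has the same index as $Y$ and the same submodule Grassmannian Euler characteristics as $Y$ (by definition of $\bP V_{\langle Y\rangle}$), one gets
\[
x^{\ind{T}(L\oplus M)}\sum_{e,f}\chi\big(L_1^{V,Y}(e,f)\big)x^{-\iota(e+f)} = \chi(\bP V_{\langle Y\rangle})\cdot x^{\ind{T}Y}\sum_{g}\chi\big(\Gr{g}(FY)\big)x^{-\iota(g)} = \chi(\bP V_{\langle Y\rangle})\,CC_T(Y).
\]
The two substitutions one needs here are (a) $\ind{T}(L\oplus M) - \iota(\text{(terms from } i,p\text{)}) = \ind{T}Y$, which follows from the additivity of the index on triangles modulo the $\iota$-correction (exactly Palu's index computation), and (b) the fiber-counting over the Grassmannian $\Gr{g}(FY)$ that reconstructs $\chi(\Gr{g}(FY_\varepsilon)) = \chi(\Gr{g}(FY))$ from the refined strata indexed by $(e,f)$ with $e+f$ matched to $g$ via the long exact sequence. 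I would cite the corresponding computation in \cite[proof of Theorem 1.1, first term]{Palu09} and indicate that the only modification is carrying the factor $\bP V$ (resp.\ $\bP V_{\langle Y\rangle}$) along, which contributes multiplicatively because $\Psi_{L,M}(e,f)$ is the identity on the first coordinate.

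The main obstacle — and the only genuinely new point compared with \cite{Palu09} — is checking that restricting everything from $\bP\Hom{\cC}(L,\Sigma M)$ to the subspace $\bP V$ does not break the constructibility and fibration arguments: one must verify that $W_{L,M}^{V,Y}(e,f)$ is still the total space of a nice stratified family over $\bP V_{\langle Y\rangle}$ with affine-space fibers, so that $\chi$ is multiplicative across the projection $W_{L,M}^{V,Y}(e,f) \to \bP V_{\langle Y\rangle}$. This is handled by Lemma~\ref{lemm::constructible} for constructibility, and for the multiplicativity of $\chi$ one invokes that $\chi$ is multiplicative for constructible maps all of whose fibers are affine spaces (or, more precisely, that $\chi$ of the total space equals the integral of $\chi$ of the fibers, which over strata where the fiber dimension is constant reduces to multiplication by $\chi$ of an affine space, i.e.\ by $1$). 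Since $\bP V$ is a closed (hence constructible) subset of $\bP\Hom{\cC}(L,\Sigma M)$ and $\bP V_{\langle Y\rangle} = \bP V \cap \bP\Hom{\cC}(L,\Sigma M)_{\langle Y\rangle}$ is constructible, all of Palu's arguments go through verbatim with $\bP V$ in place of the full projective space, and the lemma follows.
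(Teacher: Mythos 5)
Your proposal is correct and follows essentially the same route as the paper's proof: both replace $\chi(L_1^V(e,f))$ by the Euler characteristic of the strata $W_{L,M}^{V,Y}(e,f,g)$ using the affine-space fibers of $\Psi_{L,M}(e,f)$, invoke Palu's index identity $\ind{T}(L\oplus M)-\iota(e+f)=\ind{T}Y-\iota(g)$, and then project $\coprod_{e,f}W_{L,M}^{V,Y}(e,f,g)$ onto $\bP V_{\langle Y\rangle}$, whose fibers are Grassmannians $\Gr{g}(FY')$ of constant Euler characteristic $\chi(\Gr{g}(FY))$ by the very definition of the stratum. The only difference is the (immaterial) order in which you split off the $Y$-strata versus apply the affine-fiber argument.
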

\demo{ It is proved in \cite{CC06} (see also Section 3 of \cite{Palu09}) that the fibers of $\Psi_{L,M}(e,f)$ are affine spaces.  As a consequence, we have that $\chi(L^V_1(e,f)) = \chi(W_{L, M}^{V}(e,f))$. Thus
\begin{eqnarray*}
	x^{\ind{T}(L\oplus M)}\sum_{e,f}\chi\big( L_1^V(e,f) \big) x^{-\iota(e+f)} &=& x^{\ind{T}(L\oplus M)}\sum_{e,f}\chi\big( W_{L, M}^{V}(e,f) \big) x^{-\iota(e+f)} \\
&=& \sum_{e,f,g,\langle Y \rangle}\chi\big( W_{L, M}^{V,Y}(e,f,g) \big) x^{-\iota(e+f)+\ind{T}(L\oplus M)}.
\end{eqnarray*}
Now, by \cite[Lemma 5.1]{Palu08}, if $([\varepsilon], E)$ lies in $W_{L, M}^{V,Y}(e,f,g)$, then it implies that $\ind{T}(L\oplus M) - \iota(e+f) = \ind{T}(Y) - \iota(g)$.  

Moreover, for a fixed $g$, consider the map
\begin{displaymath}
	\coprod_{e,f} W_{L,M}^{V,Y}(e,f,g) \longrightarrow \bP V_{\langle Y \rangle}
\end{displaymath}
sending a pair $([\varepsilon], E)$ to $[\varepsilon]$.  This map is obviously surjective if the left-hand side is non-empty.  Moreover, the preimage of any $[\varepsilon']$ is isomorphic to $\{[\varepsilon']\} \times \Gr{g}(FY')$, where $Y'$ sits in a triangle $M\rightarrow Y'\rightarrow L \xrightarrow{\varepsilon'} \Sigma M$.  By definition of $\bP V_{\langle Y \rangle}$, the Euler characteristic of all the fibers is the same and is equal to $\chi(\Gr{g}(FY))$.  Thus
\begin{displaymath}
	\chi(\coprod_{e,f} W_{L,M}^{V,Y}(e,f,g)) = \chi(\Gr{g}(FY))\chi(\bP V_{\langle Y \rangle}).
\end{displaymath}

So the sum becomes
\begin{eqnarray*}
	... &=& \sum_{e,f,g,Y}\chi\big( W_{L, M}^{V,Y}(e,f,g) \big) x^{-\iota(g)+\ind{T}(Y)} \\
	&=& \sum_{g, Y}\chi\big( \coprod_{e,f} W_{L, M}^{V,Y}(e,f,g) \big) x^{-\iota(g)+\ind{T}(Y)} \\
	&=& \sum_{g, Y}\chi(\Gr{g}(FY))\chi(\bP V_{\langle Y \rangle}) x^{-\iota(g)+\ind{T}(Y)} \\
	&=& \sum_{Y\in\cY_{L,M}}\chi(\bP V_{\langle Y \rangle}) CC_{T}(Y).
\end{eqnarray*}
This finishes the proof of the lemma.  
}

\subsubsection*{The second term of the RHS of ($\star$)}

\begin{lemma}\label{lemm::term2}
	We have an equality
		\begin{displaymath}
			x^{\ind{T}(L\oplus M)}\sum_{e,f}\chi\big( L_2^V(e,f) \big) x^{-\iota(e+f)} = \sum_{Y\in\cY_{M,L}}\chi(\cR_{\langle Y \rangle}) CC_{T}(Y).
		\end{displaymath}
\end{lemma}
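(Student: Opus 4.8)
The plan is to mimic the structure of the proof of Lemma~\ref{lemm::term1}, but for the "complement" set $L_2^V(e,f)$. The first task is to identify geometrically which triples $([\varepsilon], U_L, U_M)\in \bP V\times \Gr{e}(FL)\times \Gr{f}(FM)$ fail to lie in the image of $\Psi_{L,M}(e,f)$. Following \cite{CC06} and \cite{Palu09}, a triple lies in the image precisely when the pair $(U_L, U_M)$ is ``compatible'' with $\varepsilon$, i.e.\ lifts to a subrepresentation $E\subseteq FY$ with $Fp(E)=U_L$ and $Fi^{-1}(E)=U_M$; the obstruction to such a lift is measured by a connecting map, and $L_2^V(e,f)$ is cut out by the non-vanishing of this obstruction. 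The key input from \cite{Palu09} (used already implicitly there for the full-space case) is that this obstruction can be rephrased, via the $2$-Calabi--Yau duality $\beta_{L,M}$, in terms of a morphism $M\to\Sigma L$ not killed by the relevant submodule data; this is exactly where the set $\cR=\bP\Hom{\cC}(M,\Sigma L)\setminus\bP\Ker\beta_{L,M}(V,?)$ enters.

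Concretely, I would introduce the analogue $W_{M,L}^{?}$ of the $W$-sets, but now fibered over $\bP\Hom{\cC}(M,\Sigma L)$ (with the open condition defining $\cR$ imposed), parametrizing pairs $([\eta], E)$ with $E$ a subrepresentation of $FY$ for $Y$ the middle of a triangle $L\to Y\to M\xrightarrow{\eta}\Sigma L$, together with the appropriate dimension-vector strata $W_{M,L}^{\cR, Y}(e,f,g)$ and their images under a map $\Phi_{M,L}(e,f)$ analogous to $\Psi$. The goal is to establish a constructible bijection (or at least an Euler-characteristic-preserving correspondence with affine-space fibers) between $L_2^V(e,f)$ and $\coprod_{e',f'}W_{M,L}^{\cR}(e',f')$ in a way compatible with the index bookkeeping. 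Granting this, the computation is then formally identical to Lemma~\ref{lemm::term1}: one uses \cite[Lemma 5.1]{Palu08} to convert $\ind{T}(L\oplus M)-\iota(e+f)$ into $\ind{T}(Y)-\iota(g)$, then for fixed $g$ fibers the stratum over $\cR_{\langle Y\rangle}$ with fibers $\Gr{g}(FY')$ of constant Euler characteristic $\chi(\Gr{g}(FY))$, and finally reassembles the sum into $\sum_{Y\in\cY_{M,L}}\chi(\cR_{\langle Y\rangle})CC_T(Y)$.

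The main obstacle I expect is the first step: correctly identifying $L_2^V(e,f)$ with the $W_{M,L}^{\cR}$-type set and checking that the fibers of the relevant comparison map are affine spaces, so that Euler characteristics match. The subtlety is that restricting from the full space $\Hom{\cC}(L,\Sigma M)$ to the subspace $V$ changes the obstruction space: a morphism $\eta\in\Hom{\cC}(M,\Sigma L)$ now ``obstructs'' only if $\beta_{L,M}(v,\eta)\neq 0$ for some $v\in V$, which is exactly the condition $[\eta]\in\cR$; one must verify that this is precisely the locus picked out by the complement of $\Ima\Psi_{L,M}(e,f)$, and that the stratification by $(e',f')$ on the $M,L$ side corresponds correctly. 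I would handle this by revisiting the explicit description of $\Ima\Psi$ in \cite[Section~3]{Palu09} and re-running it with $V$ in place of the full Hom-space, using bifunctoriality of $\beta$ to transport the obstruction; once this dictionary is in place, the rest is a bookkeeping argument that parallels Lemma~\ref{lemm::term1} verbatim.
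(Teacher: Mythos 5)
Your plan is essentially the paper's proof: the paper likewise introduces the strata $W^{\cR,Y}_{M,L}(f,e,g)$ of pairs $([\eta],E)$ with $[\eta]\in\cR$, transfers $\chi\big(L_2^V(e,f)\big)$ to them, and then runs exactly the bookkeeping of Lemma~\ref{lemm::term1} (the index identity from \cite[Lemma 5.1]{Palu08}, then fibering over $\cR_{\langle Y\rangle}$ with fibers $\{[\eta']\}\times\Gr{g}(FY')$ of constant Euler characteristic). The comparison step you single out as the main obstacle is carried out there not by a bijection or a single comparison map but by the incidence set $C^{\cR,Y}_{L,M}(e,f,g)\subset L_2^V(e,f)\times W^{\cR,Y}_{M,L}(f,e,g)$ of pairs with $\beta_{L,M}(\varepsilon,\eta)\neq 0$ and matching submodule data, whose two projections are surjective with fibers that are extensions of affine spaces, respectively affine spaces, by \cite[Propositions 3.3 and 3.4]{Palu09}; the definition of $\cR$ is precisely what keeps the second projection surjective with affine fibers after restricting $\varepsilon$ to $\bP V$, and the matching of strata is with swapped dimension vectors (namely $W^{\cR,Y}_{M,L}(f,e,g)$ against $L_2^V(e,f)$, not a union over all $(e',f')$), which is what makes the weights $x^{\ind{T}(L\oplus M)-\iota(e+f)}$ come out right.
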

\demo{  Recall that \[\cR = \{ [\eta]\in \bP \Hom{\cC}(M, \Sigma L) \ | \ \exists \varepsilon\in V \textrm{ with } \beta_{L,M}(\varepsilon,\eta) \neq 0 \}.\] 

Define $W_{M,L}^{\bP\Hom{\cC}(L, \Sigma M), Y}(f,e,g)$ as before Lemma \ref{lemm::constructible} and let $W_{M,L}^{\cR, Y}(f,e,g)$ be the constructible subset of all pairs $([\eta], E)$ with $\eta \in \cR$.  For fixed $e$, $f$ and $g$, let $C_{L,M}^{\cR, Y}(e,f,g)$ be the subset of $L_2^{V}(e,f)\times W_{M,L}^{\cR, Y}(f,e,g)$ consisting of pairs $\big( ([\varepsilon], R, S), ([\eta], E) \big)$ such that $\beta_{L,M}(\varepsilon,\eta) \neq 0$, $Fi^{-1}(E) = S$ and $Fp(E) = R$.  Finally, let $C_{L,M}^{\cR}(e,f) = \coprod_{g, Y\in\cY_{M,L}}C_{L,M}^{\cR, Y}(e,f,g)$.

Consider the two projections
\begin{eqnarray*}
	C_{L,M}^{\cR}(e,f) &\xrightarrow{p_1}& L_2^{V}(e,f) \\
	C_{L,M}^{\cR, Y}(e,f,g) &\xrightarrow{p_2}& W_{M,L}^{\cR, Y}(f,e,g).
\end{eqnarray*}

By \cite[Proposition 3.3]{Palu09}, $p_1$ and $p_2$ are surjective.  Moreover, by \cite[Proposition 3.4]{Palu09}, the fibers of $p_1$ are extensions of affine spaces, and those of $p_2$ are affine spaces.

Therefore $\chi(C_{L,M}^{\cR}(e,f)) = \chi( L_2^{V}(e,f))$ and $\chi(C_{L,M}^{\cR, Y}(e,f,g)) = \chi( W_{M,L}^{\cR, Y}(f,e,g))$.  Thus the left-hand side in the statement is equal to
\begin{eqnarray*}
	... &=& x^{\ind{T}(L\oplus M)}\sum_{e,f}\chi(L_2^V(e,f))x^{-\iota(e+f)} \\
	&=& \sum_{e,f}\chi(C^\cR_{L,M}(e,f))x^{\ind{T}(L\oplus M)-\iota(e+f)} \\
	&=& \sum_{e,f,g,Y}\chi(C^{\cR,Y}_{L,M}(e,f,g))x^{\ind{T}(L\oplus M)-\iota(e+f)} \\
	&=& \sum_{e,f,g,Y}\chi(W^{\cR,Y}_{M,L}(f,e,g))x^{\ind{T}(L\oplus M)-\iota(e+f)}.
\end{eqnarray*}
Again, by \cite[Lemma 5.1]{Palu08}, we have that if $([\varepsilon], E)$ lies in $W^{\cR, Y}_{M,L}(f,e,g)$, then $\ind{T}(L\oplus M) - \iota(e+f) = \ind{T}(Y) - \iota(g)$.  Moreover, the map
\begin{displaymath}
  \coprod_{e,f}W^{\cR, Y}_{M,L}(f,e,g) \longrightarrow \cR_{\langle Y \rangle}
\end{displaymath}
sending $([\varepsilon],E)$ to $[\varepsilon]$ is surjective (if the left-hand side is non-empty), and its fibers have the form $\{[\varepsilon']\} \times \Gr{g}(FY')$, where $Y'$ sits in a triangle $L\rightarrow Y' \rightarrow M\xrightarrow{\varepsilon'} \Sigma L$.  Thus
\begin{displaymath}
  \chi(\coprod_{e,f}W^{\cR, Y}_{M,L}(f,e,g)) = \chi(\cR_{\langle Y \rangle})\chi(\Gr{g}(FY)).
\end{displaymath}
Therefore the above sequence of equalities continues:
\begin{eqnarray*}
 \ldots &=& \sum_{e,f,g,Y}\chi(W^{\cR,Y}_{M,L}(f,e,g))x^{\ind{T}Y-\iota(g)} \\
	&=& \sum_{g,Y}\chi(\coprod_{e,f}W^{\cR,Y}_{M,L}(f,e,g))x^{\ind{T}Y-\iota(g)} \\
	&=& \sum_{g,Y}\chi(\cR_{\langle Y \rangle})\chi(Gr{g}(FY))x^{\ind{T}Y-\iota(g)} \\
	&=& \sum_{Y\in \cY_{M,L}} \chi(\cR_{\langle Y \rangle})CC_{T}(Y).
\end{eqnarray*}
This finishes the proof.

}

Theorem \ref{theo::main} then follows directly from Lemma \ref{lemm::term1} and Lemma \ref{lemm::term2}.

\section{Refined multiplication formula: Frobenius case} 
We will now follow the ideas of \cite{FK09} (see also \cite[Section 4]{Palu09}).  The main difference will be that our Frobenius categories can be $\Hom{}$-infinite; we will only assume that they are $\Ext{}{}$-finite.  We will also assume that they are Krull--Schmidt, and that their stable categories have constructible cones.  Since the proofs are very similar to the ones in the triangulated case, in this section we only provide a detailed outline for the Frobenius case.
\subsection{Recollections on Frobenius categories}
\subsubsection{2-Calabi--Yau Frobenius categories}
A \emph{Frobenius category} is an exact category $\cE$ in the sense of Quillen with enough projectives and enough injectives, in which projectives and injectives coincide. It is \emph{$\Ext{}{}$-finite} if for any objects $X$ and $Y$ of $\cE$, the space $\Ext{1}{\cE}(X,Y)$ is finite-dimensional.

It was proved in \cite[Theorem 9.4]{Happel87} that if $\cE$ is a Frobenius category, then its stable category $\underline{\cE}$ is triangulated ($\underline{\cE}$ is the quotient of $\cE$ by the ideal of all morphisms factoring through a projective-injective object). Note that if $\cE$ is $\Ext{}{}$-finite, then $\underline{\cE}$ is $\Hom{}$-finite.

\begin{definition}[Section 2.7 of \cite{FK09}]
  An $\Ext{}{}$-finite Frobenius category is \emph{2-Calabi--Yau} if its stable category is 2-Calabi--Yau as a triangulated category.
\end{definition}

\subsubsection{Cluster-tilting objects}
Let $\cE$ be an $\Ext{}{}$-finite Krull--Schmidt 2-Calabi--Yau Frobenius category.

\begin{definition}[Section 2.7 of \cite{FK09}]
An object $T$ of $\cE$ is a \emph{cluster-tilting object} if
\begin{enumerate}
 \item $T$ is rigid, that is, the space $\Ext{1}{\cE}(T,T)$ vanishes;
 \item for any object $X$ of $\cE$, if $\Ext{1}{\cE}(T,X)$ vanishes, then $X\in \add T$; and
 \item each object $X$ of $\cE$ admits a right $\add T$-approximation $T^X\to X$ and a left $\add T$-approximation $X\to T_X$ (in other words, the functors $\Hom{\cE}(X,?)|_{\add T}$ and $\Hom{\cE}(?,X)|_{(\add T)^{op}}$ are finitely generated).
\end{enumerate}
\end{definition}
Note that if $T$ is a cluster-tilting object, then every indecomposable projective-injective object is isomorphic to a direct summand of $T$.

\begin{examples}\label{exam::Frobenius}
 \begin{enumerate}
  \item The module category of a preprojective algebra of Dynkin type is a $\Hom{}$-finite stably $2$-Calabi--Yau Frobenius category with a cluster tilting object.  It was used in \cite{GLS06} in the categorification of cluster algebras.  Its stable category has constructible cones by \cite[Section 2.4]{Palu09}.
  
  \item More generally, subcategories $\cC_w$ of modules over preprojective algebras were were used in \cite{BIRSc,GLS10} to category cluster algebras.  These categories have the same properties as those of the previous example.
  
  \item   Let $0<k<n$ be integers, and put $\hat R=\bC[[x,y]]/(x^k-y^{n-k})$.  The group $G=\langle \zeta\rangle$ of $n$-th roots of unity acts on $\hat R$ by $\zeta.x=\zeta x$ and $\zeta.y=\zeta^{-1}y$. Then the category $\cE= CM_G(\hat R)$ of $G$-equivariant Cohen--Macaulay $\hat R$-modules is a (not necessarily $\Hom{}$-finite) $\Ext{}{}$-finite Frobenius category with a cluster tilting object.  Its stable category has constructible cones, since it is equivalent to categories from the previous example. This was used in \cite{JensenKingSu} to give a categorification of the cluster algebra structure of the homogeneous coordinate ring $\bC[G_{k,n}]$ of the Grassmannian $G_{k,n}$.
 \end{enumerate}
\end{examples}

Let $T$ be a basic cluster-tilting object of $\cE$.  Write $T=T_1\oplus\ldots\oplus T_n$, where each $T_i$ is indecomposable, and let $C=\End{\cE}(T)$.  Since $T$ is cluster-tilting, we have a functor
\begin{displaymath}
 F=\Hom{\cE}(T,?): \cE \longrightarrow \fgmod C,
\end{displaymath}
where $\fgmod C$ is the category of finitely generated right $C$-modules.
For any finitely generated $C$-modules $L$ and $N$ such that $N$ is finite-dimensional, define
\begin{displaymath}
  \langle L,N \rangle_\tau = \dim \Hom{C}(L,N)-\dim \Ext{1}{C}(L,N),
\end{displaymath}
\begin{displaymath}
  \langle L,N \rangle_3 = \sum_{i=0}^3 (-1)^i \dim \Ext{i}{C}(L,N).
\end{displaymath}
Note that these expressions are well-defined integers, since $\Ext{i}{C}(L,N)$ is finite-dimensional because $\cE$ is $\Ext{}{}$-finite, and since $\Hom{C}(L,N)$ is finite-dimensional because $L$ is finitely generated and $N$ is finite-dimensional.

Finally, let $\underline{C} = \End{\underline{\cE}}(T)$.  As in \cite[Section 4]{KR07} and \cite[Section 3]{FK09}, we view $\underline{C}$-modules as $C$-modules with no composition factors isomorphic to the simple modules corresponding to the projective-injective direct summands of $T$.

\begin{proposition}[Proposition 3.2 of \cite{FK09}]
  If $L$ and $N$ are finite-dimensional $\underline{C}$-modules of the same dimension vector, then for any finite-dimensional $C$-module $Y$, we have that
\begin{displaymath}
  \langle L,Y \rangle_3 = \langle N,Y \rangle_3.
\end{displaymath}
\end{proposition}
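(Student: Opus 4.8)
\emph{Overall strategy.} The plan is to follow the approach of Fu--Keller \cite{FK09}, reducing the statement to a projective-dimension bound over $C$ and then to the additivity of the ordinary Euler form. The point will be that, for a finite-dimensional $\underline C$-module $L$, the truncated form $\langle L,-\rangle_3$ is in fact the full Euler form of $L$, so that it is automatically additive in the first argument, hence factors through the Grothendieck group, hence depends only on $\dimv L$.

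\emph{Reduction.} Recall that $\underline C$ is the quotient of $C$ by the ideal generated by the idempotents attached to the projective-injective summands of $T$; hence the finite-dimensional $\underline C$-modules are precisely the finite-dimensional $C$-modules with no composition factor at those ``frozen'' vertices. It therefore suffices to show that, for a fixed finite-dimensional $C$-module $Y$, the assignment $L\mapsto\langle L,Y\rangle_3$ is additive on short exact sequences $0\to L'\to L\to L''\to 0$ of finite-dimensional $\underline C$-modules: an additive function of objects factors through $K_0(\fgmod\underline C)$, the dimension vector also factors through $K_0(\fgmod\underline C)$ and is injective on it, so $\langle L,Y\rangle_3$ then depends only on $\dimv L$. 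I would obtain this additivity from the following claim: \emph{every finite-dimensional $\underline C$-module has projective dimension at most $3$ as a $C$-module}. Granting the claim, $\langle -,Y\rangle_3$ agrees on $\underline C$-modules with the Euler form $\sum_{i\ge 0}(-1)^i\dim_k\Ext{i}{C}(-,Y)$ (the truncation at $3$ being harmless), and the Euler form is manifestly additive in its first argument on short exact sequences of modules of finite projective dimension.

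\emph{The projective-dimension bound.} This is where the $\Ext{}{}$-finite $2$-Calabi--Yau Frobenius structure and the cluster-tilting object $T$ really enter, and I expect it to be the only genuine difficulty. The cleanest route is to produce, from the $2$-Calabi--Yau structure of the stable category $\underline\cE$ and the rigidity $\Ext{1}{\cE}(T,T)=0$, a projective $C$-bimodule resolution $\cdots\to B_2\to B_1\to B_0\to C\to 0$ whose terms in homological degree $\ge 4$ lie in the additive closure of bimodules of the form $Ce_a\otimes_k e_bC$ with $b$ a frozen vertex; since a $\underline C$-module $L$ is annihilated by every such $e_b$, tensoring this resolution with $L$ yields a projective resolution of $L$ of length $\le 3$, whose terms $B_i\otimes_C L$ moreover have dimension vectors depending linearly on $\dimv L$. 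Concretely, one lifts $L$ to an object $X$ of $\cE$ via the Keller--Reiten equivalence (the Frobenius analogue of Proposition~\ref{prop::KellerReiten}(1)), iteratively resolves $X$ by objects of $\add T$ using the approximation sequences afforded by $T$ being cluster-tilting, and applies $F=\Hom{\cE}(T,?)$, using the rigidity of $T$ at each stage to get exactness; the cancellation that makes the resolution stop after three steps is exactly the Gorenstein/Calabi--Yau property of such endomorphism algebras established by Keller--Reiten \cite{KR07} and, in the concrete families of Example~\ref{exam::Frobenius}, by Geiss--Leclerc--Schr\"oer \cite{GLS06}.

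\emph{Conclusion.} With the bound in hand, the reduction gives $\langle L,Y\rangle_3=\langle N,Y\rangle_3$ whenever $\dimv L=\dimv N$; in fact $\langle L,Y\rangle_3$ then depends on $L$ and $Y$ only through a fixed bilinear pairing of their dimension vectors, read off from the shapes of the $B_i$. This is the Frobenius analogue of the fact used by Palu in the triangulated setting, and the whole argument parallels \cite{Palu08} and \cite{FK09}.
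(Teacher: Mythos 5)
The paper gives no proof of this proposition, simply citing \cite{FK09}, and your argument is essentially the one given there: reduce to showing that every finite-dimensional $\underline{C}$-module $L$ admits a projective resolution of length $3$ over $C$ (so that $\langle -,Y\rangle_3$ is the genuine Euler form, hence additive on short exact sequences, hence factors through the class in $K_0(\fgmod \underline{C})$, i.e.\ through the dimension vector), the resolution being obtained by applying $F=\Hom{\cE}(T,?)$ to the two $\add T$-approximation conflations $0\to T_1\to T_0\to M\to 0$ and $0\to M\to T^0\to T^1\to 0$ for a lift $M$ of $L$ under the Keller--Reiten equivalence and splicing the results. The only soft spot in your write-up is the justification of the length-$3$ bound: the speculative bimodule-resolution route is unnecessary, and the ``cancellation after three steps'' comes not from a Calabi--Yau or Gorenstein property but simply from condition (2) in the definition of a cluster-tilting object, which forces the kernel $T_1$ of the right $\add T$-approximation deflation $T_0\to M$ to lie again in $\add T$ (one checks $\Ext{1}{\cE}(T,T_1)=0$ from the long exact sequence), so that $FM$ has projective dimension at most $1$ and the spliced resolution of $L=\Ext{1}{\cE}(T,M)$ stops after four projective terms.
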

In view of this proposition, if $e$ is a dimension vector, we can write $\langle e,Y \rangle_3$ for the value of $\langle L,Y \rangle_3$ for any $\underline{C}$-module $L$ of dimension vector $e$.

\begin{definition}[\cite{FK09}]
  The \emph{cluster character associated with $T$} is the map
\begin{displaymath}
  CC_{T}:Obj(\cE) \longrightarrow \bQ(x_1, \ldots, x_n)
\end{displaymath}
defined by
\begin{displaymath}
  CC_{T}(M) = \prod_{i=1}^n x_i^{\langle FM, S_i \rangle_\tau} \sum_e \chi\big( \Gr{e}( \Ext{1}{\cE}(T,M) ) \big) \prod_{i=1}^n x_i^{\langle e, S_i \rangle_3}.
\end{displaymath}
\end{definition}

\subsection{The formula}

\begin{theorem}\label{theo::main-exact}
  Let $\cE$ be a $\Hom{}$-finite 2-Calabi--Yau Frobenius category with a cluster tilting object $T$.  Assume that the triangulated category $\underline{\cE}$ has constructible cones. Let $L$ and $M$ be two objects of $\cE$, and let $V$ be a vector subspace of $\Ext{1}{\cE}(L,M)$.  Then
\begin{displaymath}
  \chi(\bP V)CC_{T}(L)CC_{T}(M) = \sum_{Y\in \cY_{L,M}} \chi(\bP V_{\langle Y \rangle})CC_{T}(Y) + \sum_{Y\in \cY_{M,L}} \chi(\cR_{\langle Y \rangle})CC_{T}(Y).
\end{displaymath}
\end{theorem}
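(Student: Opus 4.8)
The plan is to reduce Theorem~\ref{theo::main-exact} to the triangulated case, Theorem~\ref{theo::main}, by carefully transporting the relevant data along the projection functor $\pi:\cE\to\underline{\cE}$. The point is that the Frobenius cluster character $CC_T$ on $\cE$ and the triangulated cluster character on $\underline{\cE}$ are not literally equal, but they differ only by monomial factors coming from the projective-injective summands of $T$, and these factors behave multiplicatively in a way compatible with short exact sequences. So the first step is to recall, following \cite[Section~3]{FK09}, the precise comparison between $CC_T^{\cE}$ and $CC_T^{\underline{\cE}}$: if $T=T_{pr}\oplus \underline{T}$ with $T_{pr}$ the sum of the indecomposable projective-injectives, then for any object $X$ of $\cE$ one has an identity expressing $CC_T^{\cE}(X)$ in terms of $CC_{\underline{T}}^{\underline{\cE}}(\pi X)$, the variables $x_i$ attached to $T_{pr}$, and the index/$\tau$-pairing data. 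One then checks that $\langle FM,S_i\rangle_\tau$ and $\langle e,S_i\rangle_3$ restrict, on the $\underline{C}$-part, to the exponents appearing in the triangulated $CC_T$, so the discrepancy is a genuine monomial.

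\medskip

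\textbf{Translating the exchange data.} The second step is the dictionary between the two sides of the formula. A short exact sequence $0\to M\to Y\to L\to 0$ in $\cE$ maps under $\pi$ to a triangle $\pi M\to \pi Y\to \pi L\to \Sigma\pi M$ in $\underline{\cE}$, and conversely every such triangle lifts (after adding projective summands) to a conflation; moreover $\Ext{1}{\cE}(L,M)\cong \Hom{\underline{\cE}}(\pi L,\Sigma\pi M)$ canonically. Under this isomorphism the subspace $V\subseteq\Ext{1}{\cE}(L,M)$ corresponds to a subspace of $\Hom{\underline{\cE}}(\pi L,\Sigma\pi M)$, the sets $\cY_{L,M}$, $\cY_{M,L}$ correspond to the analogous sets in $\underline{\cE}$ (here one must verify that the index condition and the Grassmannian-Euler-characteristic condition defining $\langle Y\rangle$ match up, using that $\Gr{e}(\Ext{1}{\cE}(T,Y))=\Gr{e}(\Ext{1}{\underline{\cE}}(T,\pi Y))$ and that the index in $\cE$ and in $\underline{\cE}$ differ only in the $T_{pr}$-coordinates, which are constant along a conflation), and $\cR$ is literally the same set since the $2$-Calabi--Yau form $\beta$ lives on $\underline{\cE}$. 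The key bookkeeping is that in any conflation the class $[Y]$ in $K_0$ satisfies $[Y]=[L]+[M]$, so the projective-injective contribution to the monomial prefactor of $CC_T^{\cE}(Y)$ is exactly the product of those of $CC_T^{\cE}(L)$ and $CC_T^{\cE}(M)$.

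\medskip

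\textbf{Conclusion.} Granting the monomial comparison and the dictionary, one applies Theorem~\ref{theo::main} to $\underline{\cE}$ (which is $\Hom{}$-finite, Krull--Schmidt, $2$-Calabi--Yau, with cluster tilting object $\pi T=\underline{T}$ and, by hypothesis, constructible cones) with the transported $L$, $M$, $V$. Multiplying both sides of the resulting identity by the common projective-injective monomial factor $\prod_{T_i\in T_{pr}} x_i^{?}$ — which, by the $K_0$-additivity just noted, is the same on the left-hand side and in every term of the right-hand side — yields exactly the stated formula in $\cE$. Alternatively, and this is essentially the route \cite{FK09} and \cite[Section~4]{Palu09} take, one can redo the proof of Theorem~\ref{theo::main} verbatim inside $\cE$, replacing triangles by conflations, $\Hom{\cC}(L,\Sigma M)$ by $\Ext{1}{\cE}(L,M)$, the index by the $\langle-,-\rangle_\tau$ data, $\iota$ by $\langle-,-\rangle_3$, and invoking the $\cE$-analogues of \cite[Lemmas~3.1,~5.1]{Palu08} and \cite[Propositions~3.3,~3.4]{Palu09} established in \cite{FK09}; the constructibility statements then rest on the assumption that $\underline{\cE}$ has constructible cones.

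\medskip

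\textbf{Main obstacle.} The delicate point is not the abstract reduction but the verification that the \emph{refined} partitioning sets $\Ext{1}{\cE}(L,M)_{\langle Y\rangle}$ — defined via equality of indices in $\cE$ and of all Grassmannian Euler characteristics of $\Ext{1}{\cE}(T,Y)$ — are constructible and partition $\Ext{1}{\cE}(L,M)$ into finitely many pieces, together with the corresponding statement for the sets $W$ and $C$ in the Frobenius setting. In the triangulated case this is \cite[Proposition~2.8]{Palu09} and Lemma~\ref{lemm::constructible}; here one needs the analogous constructibility, which is exactly where the hypothesis ``$\underline{\cE}$ has constructible cones'' is used, since the relevant cones in $\cE$ are computed in $\underline{\cE}$ up to projective summands. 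Once this constructibility is in hand, the Euler-characteristic fibration arguments (fibers that are affine spaces, respectively extensions of affine spaces) go through as in Lemmas~\ref{lemm::term1} and~\ref{lemm::term2} without change.
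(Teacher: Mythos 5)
Your second, ``alternative'' route is exactly the paper's proof: the paper handles Theorem~\ref{theo::main-exact} with the single remark that one repeats the argument of Theorem~\ref{theo::main} along the lines of \cite[Theorem 4.1]{Palu09}, replacing triangles by conflations, $\Hom{\cC}(L,\Sigma M)$ by $\Ext{1}{\cE}(L,M)$, and \cite[Lemma 5.1]{Palu08} by \cite[Lemma 3.4]{FK09}, with the constructibility statements resting on the hypothesis that $\underline{\cE}$ has constructible cones. Your description of that route, including where the constructible-cones assumption is used, matches the paper's intent, so on this count the proposal is fine.

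Your primary route --- applying Theorem~\ref{theo::main} in $\underline{\cE}$ and correcting by a monomial in the variables attached to the projective-injective summands --- is genuinely different, and as written it has a gap. The prefactor of $CC_T(Y)$ is $\prod_i x_i^{\langle FY,S_i\rangle_\tau}$ with $F=\Hom{\cE}(T,?)$, and $F$ is \emph{not} exact on conflations (the long exact sequence continues into $\Ext{1}{\cE}(T,M)$), so additivity of the frozen-vertex exponents along a conflation is not a formal consequence of $[Y]=[L]+[M]$ in $K_0(\cE)$; establishing it is essentially the content of the comparison lemmas of \cite{FK09} that the reduction was meant to bypass. Moreover, a triangle in $\underline{\cE}$ lifts to a conflation in $\cE$ only after modifying the middle term by projective-injective summands, so the objects $Y$ indexing the right-hand side in $\underline{\cE}$ determine their lifts in $\cE$ only up to such summands; matching the sets $\cY_{L,M}$, $\cY_{M,L}$ and the prefactors therefore requires choosing these lifts coherently, which is again the same bookkeeping. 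So the reduction can presumably be completed, but it buys nothing over the direct transcription that you state as the alternative and that the paper actually uses.
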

The proof follows the lines of that of \cite[Theorem 4.1]{Palu09}; it is similar to that of Theorem \ref{theo::main}, but uses \cite[Lemma 3.4]{FK09} instead of \cite[Lemma 5.1]{Palu08}.

\section{Applications}

\subsection{Specialization of cluster variables in cluster algebras}

Let~$\cC$ be a $\Hom{}$-finite 2-Calabi--Yau triangulated category with a basic cluster tilting object $T=\bigoplus_{i=1}^n T_i$.  Following~\cite{CLS05}, let the \emph{Caldero-Chapoton algebra}~$\cA_{\cC}$ be the subring of the ring~$\bZ[x_1^{\pm 1}, \ldots, x_n^{\pm 1}]$ generated by the set of all~$CC_{T}(X)$, as~$X$ spans all objects of~$\cC$. 

Motivated by the reduction of friezes (see Section~\ref{subs::friezes}) and the study of morphisms of rooted cluster algebras of~\cite{ADS}, we wish to study the algebra obtained from~$\cA_{\cC}$ by specializing~$x_n$ to~$1$.  To fix notation, let
\[
 \sigma:\bZ[x_1^{\pm 1}, \ldots, x_n^{\pm 1}] \to \bZ[x_1^{\pm 1}, \ldots, x_{n-1}^{\pm 1}]
\]
be the morphism sending each of~$x_1, \ldots, x_{n-1}$ to itself and sending~$x_n$ to~$1$.

The main result of this section is stated in terms of Calabi--Yau reduction: it was proved in~\cite{IY08} that the category
\[
 \cC' = \left( \Sigma^{-1}T_n \right)^{\perp} / \left(T_n\right)
\]
is a $\Hom{}$-finite 2-Calabi--Yau triangulated category with a basic cluster tilting object $T'$, where~$T'$ is the image of~$T$ under the projection functor.

\begin{theorem}\label{theo::cc-specialization}
 Let~$\cC$ be a $\Hom{}$-finite 2-Calabi--Yau triangulated category with constructible cones and a basic cluster tilting object $T=\bigoplus_{i=1}^n T_i$.  Then~$\sigma(\cA_{\cC}) = \cA_{\cC'}$.
\end{theorem}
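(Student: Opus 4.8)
The plan is to prove $\sigma(\cA_\cC) = \cA_{\cC'}$ by two inclusions, in both cases using the refined multiplication formula of Theorem~\ref{theo::main} to relate $CC_T$ on $\cC$ with $CC_{T'}$ on the Calabi--Yau reduction $\cC'$. The bridge between the two sides is the behaviour of the cluster character under the projection functor $\pi:(\Sigma^{-1}T_n)^\perp \to \cC'$: one first establishes a \emph{compatibility formula} saying that for an object $X$ in $(\Sigma^{-1}T_n)^\perp$, the Laurent polynomial $\sigma(CC_T(X))$ equals, up to a monomial in $x_n$ which must cancel, the polynomial $CC_{T'}(\pi X)$ (and that in fact $x_n$ does not occur, so $\sigma$ just erases $x_n$). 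This is the analogue for cluster characters of the known compatibility of indices under Calabi--Yau reduction (as in~\cite{IY08}); I would prove it by writing down the defining triangles $T_1^X\to T_0^X\to X$ in $\cC$, observing they descend to $\cC'$, computing how $\ind_T$ and the submodule Grassmannians of $FX=\Hom_\cC(T,\Sigma X)$ restrict, and using that $F'X = \Hom_{\cC'}(T',\Sigma_{\cC'}X)$ is obtained from $FX$ by killing the composition factor at the vertex $n$.

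For the inclusion $\cA_{\cC'}\subseteq \sigma(\cA_\cC)$, I would show every generator $CC_{T'}(X')$ of $\cA_{\cC'}$ is hit: lift $X'$ to an object $X$ of $(\Sigma^{-1}T_n)^\perp$ with $\pi X \cong X'$, and the compatibility formula gives $\sigma(CC_T(X)) = CC_{T'}(X')$. For the reverse inclusion $\sigma(\cA_\cC)\subseteq \cA_{\cC'}$, the issue is that a general object $X$ of $\cC$ is \emph{not} in $(\Sigma^{-1}T_n)^\perp$, so $CC_T(X)$ need not specialize to a generator of $\cA_{\cC'}$; here is where the refined formula does the real work. Given arbitrary $X$, I would build a triangle $T_n^{\oplus a}\to X \to X'' \to \Sigma T_n^{\oplus a}$ (or its shift) making $X''$ lie in the perpendicular category, realized by a subspace $V$ of $\Hom_\cC(\Sigma T_n^{\oplus a}, \Sigma X)=\Hom_\cC(T_n^{\oplus a}, X)$ — actually one wants $\Hom_\cC(L,\Sigma M)$ with $L, M$ suitable shifts so that the middle terms $Y$ appearing in Theorem~\ref{theo::main} include $X$ itself — and then apply the refined multiplication formula with this $V$. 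Since $CC_T(T_n) = x_n$, after applying $\sigma$ the factor $x_n$ on the left becomes $1$, and on the right every middle term $Y$ is an extension that is ``closer'' to the perpendicular category; by a dévissage/induction on how far $X$ is from $(\Sigma^{-1}T_n)^\perp$ (measured e.g. by $\dim\Hom_\cC(T_n,\Sigma^{-1}X)$ or the multiplicity of $T_n$ needed) one reduces $\sigma(CC_T(X))$ to a $\bZ$-linear combination of products of $\sigma(CC_T(Y))$ with $Y$ in the perpendicular category, hence to $\cA_{\cC'}$ by the first inclusion. The coefficients $\chi(\bP V_{\langle Y\rangle})$ and $\chi(\cR_{\langle Y\rangle})$ are integers by the constructibility results (Lemma~\ref{lemm::constructible}), so everything stays in $\cA_{\cC'}$ rather than $\cA_{\cC'}\otimes\bQ$ — this integrality is precisely the improvement over~\cite{ADS}.

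The main obstacle I anticipate is \emph{controlling the dévissage}: one must choose the subspace $V$ (and the objects $L,M$) so that \emph{all} middle terms $Y$ occurring on the right-hand side of the refined formula are genuinely simpler than $X$ in a well-founded sense, so that the induction terminates; a naive choice may reproduce $X$ among the $Y$'s with a nonzero coefficient, giving a circular relation. The refinement to a subspace $V$ is what gives the flexibility to avoid this — by shrinking $V$ one shrinks the set $\cY_{L,M}$ of middle terms — but making this precise requires a careful analysis of which extensions $M\to Y'\to L\to \Sigma M$ keep the $T_n$-multiplicity or the perpendicularity defect from going up, and matching this with the index/Euler-characteristic equivalence classes defining $\cY_{L,M}$. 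A secondary technical point is checking that $\cC'$ has constructible cones so that Theorem~\ref{theo::main} is even applicable there, but this should follow from the corresponding property of $\cC$ together with the description of $\cC'$ as a subquotient (cf.\ the examples in Section~\ref{subs::constructible}).
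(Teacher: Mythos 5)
Your overall strategy is the same as the paper's (reduce to the perpendicular category $(\Sigma^{-1}T_n)^\perp$, use the refined formula with a small subspace $V$ and the fact that $CC_T(T_n)=x_n$ specializes to $1$, then induct), and your first inclusion $\cA_{\cC'}\subseteq\sigma(\cA_\cC)$ via $\pi$ is exactly the paper's base case. But the step you yourself flag as the main obstacle --- choosing $V$ and a well-founded measure so that the dévissage terminates --- is precisely the content of the proof, and you leave it unresolved. Your sketch (pick a triangle $T_n^{\oplus a}\to X\to X''\to\Sigma T_n^{\oplus a}$ landing $X''$ in the perpendicular category in one step, then worry about which middle terms $Y$ reappear) is not justified and is not needed. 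The paper's resolution is simpler and you should compare it with your plan: induct on $d=\dim\Hom{\cC}(T_n,\Sigma X)$; if $d>0$, pick a single non-split $\xi\in\Hom{\cC}(T_n,\Sigma X)$ with triangle $X\to E\to T_n\xrightarrow{\xi}\Sigma X$ and take $V=\bC\xi$, so $\chi(\bP V)=1$ and the first sum in Theorem~\ref{theo::main} collapses to the single term $CC_T(E)$. The termination worry is then killed by an explicit dimension drop: applying $\Hom{\cC}(T_n,?)$ to the triangle and using rigidity of $T_n$ (so $\Hom{\cC}(T_n,\Sigma T_n)=0$) gives $\Hom{\cC}(T_n,\Sigma E)\cong\Hom{\cC}(T_n,\Sigma X)/\xi_*\Hom{\cC}(T_n,T_n)$, and $\xi_*(\mathrm{id}_{T_n})=\xi\neq 0$ forces $\dim\Hom{\cC}(T_n,\Sigma E)<d$; dually, every $Y$ in the second sum sits in a non-split triangle $T_n\to Y\to X\to\Sigma T_n$, the same argument gives $\dim\Hom{\cC}(Y,\Sigma T_n)<\dim\Hom{\cC}(X,\Sigma T_n)=d$, and the $2$-Calabi--Yau property converts this into $\dim\Hom{\cC}(T_n,\Sigma Y)<d$. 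So $X$ can never recur among the $Y$'s, and no ``perpendicularity defect in one step'' is required. Without this (or an equivalent) argument your induction is not established, so as written the proposal has a genuine gap at its central step.

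Two smaller points. First, your worry about constructible cones for $\cC'$ is moot: the refined formula is only ever applied inside $\cC$, never in $\cC'$. Second, the base-case compatibility $\sigma(CC_T(X))=CC_{T'}(\pi X)$ for $X\in(\Sigma^{-1}T_n)^\perp$ is indeed needed (the paper invokes it without detailed proof), and your plan to verify it via indices and the identification of $F'X$ with the quotient of $FX$ killing the vertex $n$ is reasonable; but note it is only used for objects already in the perpendicular category, not as part of the dévissage itself.
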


\begin{remark}
 The proof of \cite[Theorem 6.13]{ADS} shows that~$\sigma(\cA_{\cC}) \subseteq \cA_{\cC'}\otimes_{\bZ} \bQ$.  It uses the multiplication formula of Palu~\cite{Palu09}.  Our proof of Theorem~\ref{theo::cc-specialization} follows the same lines using our refined multiplication formula (Theorem~\ref{theo::main}) instead.  We nonetheless include the complete argument below.
\end{remark}

\begin{proof} 
 (of Theorem~\ref{theo::cc-specialization}).  It suffices to prove that~$\sigma\left(CC_T(X)\right) \in \cA_{\cC'}$ for all objects~$X$ of~$\cC$. The proof is by induction on the dimension of the space~$\Hom{\cC}(T_n, \Sigma X)$.  Assume first that $\dim \Hom{\cC}(T_n, \Sigma X) = 0$.  Then~$X\in (\Sigma^{-1}T_n)^{\perp}$. Denote by~$\pi$ the projection
 \[
  \pi: \left( \Sigma^{-1}T_n \right)^{\perp} \to \cC'.
 \]
 Then~$\sigma\left(CC_T(X)\right) = CC_{T'}(\pi X) \in \cA_{\cC'}$. Note that this implies that~$\cA_{\cC'} \subseteq \sigma(\cA_{\cC})$, since all objects of~$\cC'$ have the form~$\pi(X)$ with~$X$ an object of~$\cC$.
 
 \smallskip

 Assume now that~$\dim \Hom{\cC}(T_n, \Sigma X) = d > 0$.  Choose any non-split triangle
 \[
  X\xrightarrow{} E \xrightarrow{} T_n \xrightarrow{\xi} \Sigma X
 \]
 and let~$V$ be the span on~$\xi$ in~$\Hom{\cC}(T_n, \Sigma X)$.  Applying Theorem~\ref{theo::main}, we get that
 \[
  CC_T(X)CC_T(T_n) = CC_T(E) + \sum_{Y\in \cY_{X, T_n}} \chi\big( \cR_{\langle Y \rangle} \big) CC_T(Y).
 \]
 Since~$CC_T(T_n)=x_n$, applying the specialization~$\sigma$ to the left-hand side yields~$\sigma\left(CC_T(X)\right)$.  Since all~$\chi\big( \cR_{\langle Y \rangle} \big)$ are integers, it thus suffices to prove that~$CC_T(E)$ and all~$CC_T(Y)$ on the right-hand side are in~$\cA_{\cC'}$.  We do this by showing that the dimensions of~$\Hom{\cC}(T_n, \Sigma E)$ and~$\Hom{\cC}(T_n, \Sigma Y)$ are strictly smaller than~$d$ and by applying induction.
 
 To see this, first apply the functor~$\Hom{\cC}(T_n, ?)$ to the triangle defined by~$\xi$.  We obtain an exact sequence
 \[
  (T_n, T_n) \xrightarrow{\xi_*} (T_n, \Sigma X) \xrightarrow{f} (T_n, \Sigma E) \xrightarrow{} (T_n, \Sigma T_n),
 \]
 where we write~$(U,V)$ instead of~$\Hom{\cC}(U,V)$ to save space.  Since~$T_n$ is rigid,~$(T_n, \Sigma T_n)$ vanishes, so~$f$ is surjective; thus,
 \[
  (T_n, \Sigma E) \cong (T_n, \Sigma X)/\xi_*\big((T_n,T_n)\big).
 \]
 Lastly,~$\xi_*\big((T_n,T_n)\big)$ is non-zero, since it contains~$\xi_*(id_{T_n}) = \xi$.  Therefore,
 \[
  \dim (T_n, \Sigma E) < \dim (T_n, \Sigma X) = d,
 \]
 and by induction,~$\sigma\left(CC_T(E)\right) \in \cA_{\cC'}$.
 
 Now let~$Y \in \cY_{T_n, X}$.  By definition, there exists a non-split triangle
 \[
  T_n \xrightarrow{} Y \xrightarrow{} X \xrightarrow{\delta} \Sigma T_n.
 \]
 Applying the functor~$\Hom{\cC}(?, T)$ and repeating the above argument, we get that
 \[
  \dim (Y, \Sigma T_n) < \dim (X, \Sigma T_n) = d.
 \]
 By the~$2$-Calabi--Yau property,~$\dim (Y, \Sigma T_n) = \dim (T_n, \Sigma Y)$.  Thus, by induction, we also have that~$\sigma\left(CC_T(Y)\right) \in \cA_{\cC'}$.  This finishes the proof. 
\end{proof}

Theorem~\ref{theo::cc-specialization} has an interesting application to cluster algebras.

\begin{corollary}\label{coro::specialization-cluster-equal-upper}
 Let~$Q$ be a quiver without loops or~$2$-cycles,~$i$ be a vertex of~$Q$ and~$Q'$ be the quiver obtained from~$Q$ by removing the vertex~$i$.  Assume that there exists a non-degenerate potential~$W$ such that the generalized cluster category~$\cC_{Q,W}$ is~$\Hom{}$-finite.
 
 If the cluster algebra~$\cA_{Q'}$ is equal to its upper cluster algebra~$\cU_{Q'}$ (see \cite{BFZ05} for details),  then the specialization~$\sigma$ sending~$x_i$ to~$1$ satisfies~$\sigma(\cA_{Q}) = \cA_{Q'}$.
\end{corollary}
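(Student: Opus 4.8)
The plan is to derive Corollary~\ref{coro::specialization-cluster-equal-upper} from Theorem~\ref{theo::cc-specialization} by identifying the relevant cluster algebras with Caldero--Chapoton algebras of generalized cluster categories. Since $Q$ admits a non-degenerate potential $W$ with $\cC_{Q,W}$ $\Hom{}$-finite, the category $\cC = \cC_{Q,W}$ is a $\Hom{}$-finite $2$-Calabi--Yau triangulated category with constructible cones (by the examples recalled in Section~\ref{subs::constructible}) and a basic cluster tilting object $T = \bigoplus_{i=1}^n T_i$ whose endomorphism quiver is $Q$. First I would recall, following \cite{Plamondon09} and the surrounding literature, that the cluster character $CC_T$ sends the indecomposable rigid objects to the cluster variables of $\cA_Q$, so that $\cA_Q \subseteq \cA_{\cC} \subseteq \cU_Q$, with $\cA_{\cC}$ the Caldero--Chapoton algebra. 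The key input I would isolate is that under the running hypothesis one actually has $\cA_Q = \cA_{\cC}$: this is standard when $\cC$ is $\Hom{}$-finite and the cluster tilting object has acyclic-type combinatorics, but in general one invokes the fact that the cluster monomials lie in $\cA_{\cC}$ together with the reachability of all cluster tilting objects.

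Next I would set up the reduction on the categorical side. Let $T_i$ be the summand of $T$ corresponding to the vertex $i$ that is being removed; after renumbering we may assume $i = n$. By Iyama--Yoshino reduction \cite{IY08}, the subfactor category $\cC' = (\Sigma^{-1}T_n)^{\perp}/(T_n)$ is again a $\Hom{}$-finite $2$-Calabi--Yau triangulated category with a basic cluster tilting object $T' = \bigoplus_{i=1}^{n-1} \pi(T_i)$, and its endomorphism quiver is precisely $Q'$ (this compatibility of Calabi--Yau reduction with quiver mutation/deletion is the combinatorial heart and is recorded in the literature on Calabi--Yau reduction of cluster categories). Moreover $\cC'$ inherits constructible cones from $\cC$ since it is of the same type. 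Applying Theorem~\ref{theo::cc-specialization} to $\cC$ then gives directly
\[
 \sigma(\cA_{\cC}) = \cA_{\cC'},
\]
where $\sigma$ specializes $x_n$ to $1$.

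It remains to translate both sides back into cluster-algebra language. On the source, $\cA_{\cC} = \cA_Q$ as noted above, so $\sigma(\cA_Q) = \cA_{\cC'}$. On the target, I must identify $\cA_{\cC'}$ with $\cA_{Q'}$. One always has $\cA_{Q'} \subseteq \cA_{\cC'} \subseteq \cU_{Q'}$, the first inclusion because cluster monomials of $\cA_{Q'}$ are cluster characters of rigid objects of $\cC'$, and the second because every $CC_{T'}(X)$ lies in the upper cluster algebra (Laurent phenomenon plus positivity of indices, as in \cite{Plamondon09}). Here is where the hypothesis $\cA_{Q'} = \cU_{Q'}$ enters decisively: it squeezes $\cA_{\cC'}$ between two equal algebras, forcing $\cA_{\cC'} = \cA_{Q'}$. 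Combining, $\sigma(\cA_Q) = \cA_{\cC'} = \cA_{Q'}$, which is the claim.

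The main obstacle I anticipate is not Theorem~\ref{theo::cc-specialization} itself, which does the real work, but rather the two identification steps $\cA_{\cC} = \cA_Q$ and the $\Hom{}$-finiteness/quiver-compatibility of the Iyama--Yoshino reduction $\cC'$ with $\cC_{Q',W'}$ for an appropriate induced potential $W'$. One must check that $\cC'$ genuinely carries a cluster structure with exchange quiver $Q'$ and that its Caldero--Chapoton algebra coincides with its cluster algebra; strictly speaking $\cC'$ need not itself be a generalized cluster category $\cC_{Q',W'}$ (the induced potential could fail non-degeneracy or finiteness), so the cleaner route is to argue entirely with $\cA_{\cC'}$ and use only the sandwich $\cA_{Q'} \subseteq \cA_{\cC'} \subseteq \cU_{Q'}$, which holds for any $\Hom{}$-finite $2$-CY category with cluster tilting object whose endomorphism quiver is $Q'$ — and the reduction does provide such a category. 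I would take care to state this sandwich as a lemma (or cite it) before invoking the equality $\cA_{Q'} = \cU_{Q'}$.
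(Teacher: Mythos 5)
Your overall strategy (reduce to Theorem~\ref{theo::cc-specialization} and use the sandwich $\cA_{Q'}\subseteq\cA_{\cC'}\subseteq\cU_{Q'}$ together with the hypothesis $\cA_{Q'}=\cU_{Q'}$ to identify $\cA_{\cC'}$ with $\cA_{Q'}$) matches the paper on the target side. But there is a genuine gap on the source side: you assert $\cA_Q=\cA_{\cC}$ and call it standard, and your conclusion $\sigma(\cA_Q)=\cA_{Q'}$ rests entirely on this equality. In general only $\cA_Q\subseteq\cA_{\cC}\subseteq\cU_Q$ holds: the Caldero--Chapoton algebra is generated by the characters of \emph{all} objects of $\cC$, including non-rigid ones and rigid objects not reachable from $T$, and these characters need not lie in $\cA_Q$ (they lie in $\cU_Q$). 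Reachability of cluster tilting objects, even when it holds, would only give the inclusion $\cA_Q\subseteq\cA_{\cC}$, not the reverse; and the corollary's hypothesis $\cA_{Q'}=\cU_{Q'}$ concerns $Q'$ only, so it collapses the sandwich for $\cC'$ but not for $\cC$. Without $\cA_Q=\cA_{\cC}$ your argument yields only one inclusion, $\sigma(\cA_Q)\subseteq\sigma(\cA_{\cC})=\cA_{\cC'}=\cA_{Q'}$.

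The missing ingredient, which is how the paper closes the loop without ever needing $\cA_Q=\cA_{\cC}$, is the elementary reverse inclusion $\cA_{Q'}\subseteq\sigma(\cA_Q)$: every cluster (hence every cluster variable) of $\cA_{Q'}$ is the image under $\sigma$ of a cluster of $\cA_Q$, obtained by performing the same mutation sequence in $\cA_Q$ at vertices different from $i$ and then specializing $x_i$ to $1$ (this is the same observation used in the proof of Corollary~\ref{coro::specializationUpper}). Combining $\cA_{Q'}\subseteq\sigma(\cA_Q)\subseteq\sigma(\cA_{\cC})=\cA_{\cC'}$ with $\cA_{\cC'}=\cA_{Q'}$ (your sandwich plus the hypothesis) gives the equality. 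Your secondary concern about whether the Iyama--Yoshino reduction is literally a generalized cluster category with quiver $Q'$ is reasonable and your proposed workaround (argue with $\cA_{\cC'}$ for the reduced category, needing only that its cluster tilting object has quiver $Q'$) is consistent with what the paper does implicitly; but the proof cannot stand until the unjustified equality $\cA_Q=\cA_{\cC}$ is removed and replaced by the lifting argument above.
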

\begin{proof}
  Let~$\cC = \cC_{Q,W}$ and~$\cC' = \cC_{Q',W'}$, where~$W'$ is the potential obtained by removing the terms of~$W$ involving the vertex~$i$.  We know from~\cite{Palu08} that~$\cA_Q \subset \cA_{\cC}$, and from~\cite[Corollary 4.14]{Plamondon10} that~$\cA_{\cC} \subset \cU_Q$.  The same is true if we replace~$Q$ with~$Q'$; thus, by our assumption, $\cA_{Q'} = \cA_{\cC'} = \cU_{Q'}$.  Applying~$\sigma$, we get that~$\cA_{Q'} \subset \sigma(\cA_{Q}) \subset \sigma(\cA_{\cC})$, and this last set is~$\cA_{\cC'}$ by Theorem~\ref{theo::cc-specialization}.  This finishes the proof.
\end{proof}

\begin{corollary}\label{coro::specializationAcyclic}
 If~$Q$ is mutation-equivalent to an acyclic quiver, then we have that~$\sigma(\cA_Q) = \cA_{Q'}$.
\end{corollary}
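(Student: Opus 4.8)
The plan is to deduce this from Corollary~\ref{coro::specialization-cluster-equal-upper} by checking its hypotheses for quivers mutation-equivalent to acyclic ones. There are two things to verify: first, that $Q$ admits a non-degenerate potential $W$ whose generalized cluster category $\cC_{Q,W}$ is $\Hom{}$-finite; second, that $\cA_{Q'} = \cU_{Q'}$, where $Q'$ is obtained by deleting the vertex $i$.

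For the first point, I would invoke the known structure theory of generalized cluster categories in the acyclic case. If $Q$ is mutation-equivalent to an acyclic quiver $Q_0$, one may take $W=0$ on $Q_0$, whose Jacobian algebra is simply the path algebra $kQ_0$, which is finite-dimensional; the associated generalized cluster category $\cC_{Q_0,0}$ is the classical cluster category of $Q_0$ and is $\Hom{}$-finite. Since mutation of quivers with potential corresponds to mutation of the associated (generalized) cluster categories, and since $\Hom{}$-finiteness is a mutation-invariant property, $\cC_{Q,W}$ is $\Hom{}$-finite for the potential $W$ obtained from $0$ by the corresponding sequence of QP-mutations; moreover this $W$ is non-degenerate by construction. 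One subtlety here is that mutation of the quiver may introduce $2$-cycles at intermediate stages or that the premutation need not be reduced, but the reduced QP-mutation is well-defined and non-degeneracy is preserved along the sequence.

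For the second point, I would note that $Q'$ is again mutation-equivalent to an acyclic quiver: deleting a vertex from a quiver mutation-equivalent to acyclic need not literally give an acyclic quiver, but one must argue that $\cA_{Q'} = \cU_{Q'}$ regardless. The cleanest route is to use the theorem that for cluster algebras of acyclic type (equivalently, those admitting an acyclic seed), the cluster algebra coincides with the upper cluster algebra --- this is the acyclic case of the Berenstein--Fomin--Zelevinsky result, or equivalently follows from the fact that acyclic cluster algebras are finitely generated and equal to their upper bounds. So I would first reduce to showing $Q'$ is of acyclic type. Here is the main obstacle: it is \emph{not} true in general that removing a vertex from an acyclic quiver, after first mutating $Q$ into acyclic form, leaves a quiver of acyclic type --- one genuinely needs that $Q$ itself (not just something mutation-equivalent) can be taken acyclic in a way compatible with the deletion. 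I would handle this by the following observation: if $Q$ is mutation-equivalent to acyclic, pick a sequence of mutations $\mu$ at vertices \emph{other than} $i$ bringing the full quiver as close to acyclic as possible; more robustly, use that the property ``$\cA = \cU$'' for $\cA_{Q'}$ can be checked after any single mutation, and the subcategory/sub-seed structure. Realistically, the argument in the paper likely appeals to a known classification result (Barot--Geiss--Zelevinsky, or the characterization of acyclic type via the absence of certain forbidden minors) to conclude that the restriction $Q'$ of an acyclic-type quiver is again of acyclic type when $Q$ is acyclic, and then reduces the general mutation-equivalent case to the acyclic representative by noting that the deleted-vertex construction commutes appropriately with mutation at the remaining vertices.

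Once both hypotheses are in place, Corollary~\ref{coro::specialization-cluster-equal-upper} applies verbatim and gives $\sigma(\cA_Q) = \cA_{Q'}$, completing the proof. I expect the writing to be short, with essentially all the content being the verification that $\cA_{Q'}=\cU_{Q'}$; the $\Hom{}$-finiteness is standard for acyclic type.
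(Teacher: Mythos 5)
Your overall route is the one the paper takes: Corollary~\ref{coro::specializationAcyclic} is meant to follow at once from Corollary~\ref{coro::specialization-cluster-equal-upper}, and your verification of its first hypothesis is fine --- a quiver mutation-equivalent to an acyclic one carries a non-degenerate Jacobi-finite potential (mutate the zero potential on the acyclic representative; non-degeneracy and Jacobi-finiteness are preserved under QP-mutation), so the generalized cluster category is $\Hom{}$-finite.

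The genuine gap is the second hypothesis, $\cA_{Q'}=\cU_{Q'}$. You correctly identify it as the crux, but none of your suggested justifications establishes it. Commutation of vertex deletion with mutations at vertices other than $i$ is true but insufficient: the mutation sequence taking $Q$ to an acyclic quiver may involve mutations \emph{at} $i$, and deletion does not commute with those, so you cannot transport the problem to the acyclic representative; ``as close to acyclic as possible'' is not an argument; and the observation that $\cA=\cU$ can be tested at any seed only says the property is seed-independent, which was never in doubt. The appeal to Barot--Geiss--Zelevinsky is misdirected (their criterion concerns finite type via quasi-Cartan companions), and there is no established ``forbidden minor'' characterization of mutation-acyclicity to lean on. (Also, your parenthetical that removing a vertex from an acyclic quiver need not give acyclic type is wrong as stated --- a full subquiver of an acyclic quiver is acyclic; the only real issue is the mutation-acyclic, non-acyclic case.) What is actually needed, and what the paper implicitly uses, is that $\cA_{Q'}$ equals its upper cluster algebra, for instance because $Q'$ is again mutation-equivalent to an acyclic quiver, so that the Berenstein--Fomin--Zelevinsky result for acyclic seeds applies. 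That closure statement (full subquivers of mutation-acyclic quivers are mutation-acyclic) is where the content lies; it can be obtained categorically --- the Calabi--Yau reduction of the acyclic cluster category at the rigid indecomposable $T_i$ is again an acyclic cluster category, in which every cluster-tilting object is reachable, so the quiver $Q'$ of the reduced cluster-tilting object is mutation-acyclic --- or cited from the literature, but your proposal does neither, so as written the proof is incomplete.
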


\begin{corollary}\label{coro::specializationUpper}
 Assume that the quiver~$Q$ admits a non-degenerate Jacobi-finite potential, and let~$\cC$ be its generalized cluster category.  If~the upper cluster algebra $\cU_{Q'}$ is spanned the cluster characters of some objects in $\cC$, then we have that~$\sigma(\cU_{Q}) = \cU_{Q'}$.
\end{corollary}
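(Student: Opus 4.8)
The plan is to mimic the proof of Theorem~\ref{theo::cc-specialization} (and of Corollary~\ref{coro::specialization-cluster-equal-upper}), but now tracking upper cluster algebras rather than cluster algebras. Set $\cC = \cC_{Q,W}$ and $\cC' = \cC_{Q',W'}$, where $W'$ is obtained from $W$ by deleting all terms through the vertex $i$; by \cite{IY08} this $\cC'$ is indeed the Calabi--Yau reduction $(\Sigma^{-1}T_i)^{\perp}/(T_i)$, and it is the generalized cluster category of $(Q',W')$. The two inclusions I would invoke are: $\cA_{\cC} \subseteq \cU_Q$ (from \cite[Corollary 4.14]{Plamondon10}), applied to both $Q$ and $Q'$; and the new hypothesis, which says precisely that the Caldero--Chapoton algebra $\cA_{\cC'}$ equals $\cU_{Q'}$ (since $\cA_{\cC'}$ is by definition the span of the $CC_{T'}(X)$).

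The first step is to show $\sigma(\cU_Q) \supseteq \cU_{Q'}$. As in the proof of Theorem~\ref{theo::cc-specialization}, every object of $\cC'$ is $\pi(X)$ for some object $X$ of $(\Sigma^{-1}T_i)^{\perp}$, and for such $X$ one has $\sigma(CC_T(X)) = CC_{T'}(\pi X)$; hence $\cA_{\cC'} \subseteq \sigma(\cA_{\cC})$. Since $\cA_{\cC} \subseteq \cU_Q$ and, by hypothesis, $\cU_{Q'} = \cA_{\cC'}$, this gives $\cU_{Q'} = \cA_{\cC'} \subseteq \sigma(\cA_{\cC}) \subseteq \sigma(\cU_Q)$.

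The reverse inclusion $\sigma(\cU_Q) \subseteq \cU_{Q'}$ is the substantial direction. The key point is that by Theorem~\ref{theo::cc-specialization}, $\sigma(\cA_{\cC}) = \cA_{\cC'} = \cU_{Q'}$, so it suffices to show $\sigma(\cU_Q) \subseteq \sigma(\cA_{\cC})$, or more precisely that every element of $\cU_Q$ specializes into $\cU_{Q'}$. Here I would use that $\cU_Q = \bigcap_{t} \bZ[x_1^{\pm 1}(t), \ldots, x_n^{\pm 1}(t)]$, the intersection over all seeds $t$ of the Laurent rings, and relate mutation of seeds in $Q$ containing $i$ to mutation of seeds in $Q'$: deleting the vertex $i$ (kept unmutated along a path, or, via the mutation invariance and the fact that $i$ is not frozen, after commuting mutations) realises a ``deletion'' map on seeds compatible with $\sigma$. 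Concretely: an element $u \in \cU_Q$ is a Laurent polynomial in the variables of each seed; choosing seeds of $\cC$ of the form $T^{(t)}$ whose cluster-tilting object contains $T_i$, reduction $\pi$ sends such a seed to a seed of $\cC'$, and $\sigma$ of a Laurent expansion of $u$ in the $\cC$-seed is the Laurent expansion of $\sigma(u)$ in the corresponding $\cC'$-seed — so $\sigma(u)$ is Laurent in every seed of $Q'$ obtained this way, and these exhaust all seeds of $Q'$ since mutations not involving $i$ in $Q$ correspond to mutations in $Q'$ and every seed of $Q'$ is reachable.

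The main obstacle I expect is exactly this last bookkeeping: making rigorous that $\sigma$ intertwines Laurent expansions in a seed $\Sigma_t$ of $\cA_Q$ with Laurent expansions in the ``reduced'' seed $\Sigma_{t}'$ of $\cA_{Q'}$, and that as $t$ ranges over a cofinal family of seeds of $Q$ (those reachable by mutations avoiding $i$, then finally never touching $i$) the reduced seeds $\Sigma_t'$ range over all seeds of $Q'$. This is the cluster-combinatorial heart; it parallels \cite[Section~6]{ADS} but must be upgraded to the level of the full Laurent phenomenon / upper cluster algebra. Once this compatibility is in place, the chain $\sigma(\cU_Q) \subseteq \cU_{Q'}$ and the reverse inclusion above combine to give $\sigma(\cU_Q) = \cU_{Q'}$.
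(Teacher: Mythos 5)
Your proposal follows essentially the same route as the paper: the inclusion $\cU_{Q'} = \cA_{\cC'} = \sigma(\cA_{\cC}) \subseteq \sigma(\cU_Q)$ via Theorem~\ref{theo::cc-specialization}, the Laurent property of cluster characters and the spanning hypothesis, and the reverse inclusion $\sigma(\cU_Q) \subseteq \cU_{Q'}$ from the fact that every cluster (seed) of $Q'$ is the image under $\sigma$ of a cluster of $Q$ obtained by the same mutation sequence avoiding the vertex $i$, so that $\sigma$ of a Laurent expansion in a seed of $Q$ is a Laurent expansion in the corresponding seed of $Q'$. The ``bookkeeping'' you flag as the main obstacle is exactly the step the paper dispatches in one line (citing \cite{BFZ05}), and it is indeed routine since mutation at $j \neq i$ commutes with deleting the vertex $i$ and specializing $x_i$ to $1$; the detour through seeds of $\cC$ and the reduction functor $\pi$, and the intermediate remark that it suffices to show $\sigma(\cU_Q) \subseteq \sigma(\cA_{\cC})$, are unnecessary but harmless.
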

\begin{proof}
Notice that we have $\cA_{\cC'}\subset \cU_{Q'}$ and $\cA_{\cC}\subset \cU_{Q}$ by the universal Laurent property of cluster characters. Since $\cU_{Q'}$ is spanned by some cluster characters, we have $\cU_{Q'}\subset \cA_{\cC'}$. Consequently, Theorem \ref{theo::cc-specialization} implies $\cU_{Q'}=\cA_{\cC'}=\sigma (\cA_{\cC})\subset \sigma (\cU_{Q})$.

Notice that every cluster for $Q'$ (see \cite{BFZ05}) is the image of some cluster for $Q$ under $\sigma$. So we have $ \sigma (\cU_{Q})\subset \cU_{Q'}$. The desired claim follows.
\end{proof}
Many upper cluster algebras are known to possess a generic basis, see \cite{GLS-generic,plamondon2013generic,qin2019bases}. They satisfy the assumption in Corollary \ref{coro::specializationUpper}.

\subsection{Reduction of friezes}\label{subs::friezes}

Let~$Q$ be a quiver without loops or~$2$-cycles.  A \emph{frieze} is a morphism of rings~$f:\cA_Q \to \bZ$ sending every cluster variable of~$\cA_Q$ to a positive integer.  This definition generalizes the originial one of Conway and Coxeter~\cite{ConwayCoxeter}, and has been an area of active interest in recent years.  

In~\cite[Section 5]{BFGST}, an operation of reduction on friezes is considered.  The purpose of this section is to show that this reduction operation can be ``reversed'' by adding a~$1$ in a frieze.  

\begin{corollary}\label{coro::reductionFriezes}
 Let~$Q$ be an acyclic quiver without loops or~$2$-cycles, and let~$Q'$ be the quiver obtained by removing the vertex~$i$ in~$Q$, and let~$\sigma:\cA_Q \to \cA_{Q'}$ be the specialization of~$x_i$ to~$1$ (this is well-defined thanks to Corollary~\ref{coro::specializationAcyclic}).  Let~$f':\cA_{Q'}\to \bZ$ be a frieze.  Then there exists a unique frieze~$f:\cA_Q \to \bZ$ such that~$f'\circ \sigma = f$.  
\end{corollary}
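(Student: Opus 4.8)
The plan is to define $f := f' \circ \sigma$ and prove that this map is a frieze, i.e.\ that it sends every cluster variable of $\cA_Q$ to a \emph{positive} integer; uniqueness is then immediate since a ring morphism $\cA_Q \to \bZ$ is determined by its values on the initial cluster variables $x_1, \dots, x_n$, and any $f$ with $f = f' \circ \sigma$ has these values prescribed. So the entire content lies in the positivity of $f$ on all cluster variables. First I would recall that by Corollary \ref{coro::specializationAcyclic}, $\sigma(\cA_Q) = \cA_{Q'}$, so $f = f' \circ \sigma$ is a well-defined ring morphism $\cA_Q \to \bZ$, and that since $Q$ is acyclic, $\cA_Q$ is categorified by the cluster category $\cC = \cC_Q$, with every cluster variable of the form $CC_T(X)$ for an indecomposable rigid object $X$.

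The key step is to show that for \emph{every} object $X$ of $\cC$ one has $f(CC_T(X)) = f'(\sigma(CC_T(X))) > 0$ — in fact $\geq 1$. I would argue by induction on $d = \dim \Hom{\cC}(T_n, \Sigma X)$, mirroring the proof of Theorem \ref{theo::cc-specialization}, where $T_n$ is the summand of $T$ corresponding to the vertex $i$. In the base case $d = 0$, one has $\sigma(CC_T(X)) = CC_{T'}(\pi X)$ where $\pi$ is the projection to the Calabi--Yau reduction $\cC'$; since $\cC'$ is (equivalent to) the cluster category of the acyclic quiver $Q'$ and $CC_{T'}(\pi X)$ is a non-negative integer combination of cluster monomials of $\cA_{Q'}$ with at least one cluster monomial appearing (indeed $CC_{T'}$ of an object is a sum of cluster monomials with coefficients $\chi(\Gr{e}(\cdot)) \geq 0$, and the $e = 0$ term contributes a genuine cluster monomial $x^{\ind{T'}\pi X}$), applying the frieze $f'$ gives a sum of positive integers, hence a positive integer. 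For the inductive step $d > 0$, I would run exactly the argument in the proof of Theorem \ref{theo::cc-specialization}: pick a non-split triangle $X \to E \to T_n \xrightarrow{\xi} \Sigma X$, let $V = \langle \xi \rangle$, and apply Theorem \ref{theo::main} to get
\[
 CC_T(X)\, x_n = CC_T(E) + \sum_{Y \in \cY_{X, T_n}} \chi(\cR_{\langle Y \rangle})\, CC_T(Y).
\]
Applying $\sigma$ kills the $x_n$ on the left, and the argument there shows $\dim \Hom{\cC}(T_n, \Sigma E) < d$ and $\dim \Hom{\cC}(T_n, \Sigma Y) < d$ for all relevant $Y$, so by induction $f'(\sigma(CC_T(E)))$ and $f'(\sigma(CC_T(Y)))$ are all positive integers (or at least non-negative), while $f'(\sigma(CC_T(X)))$ is a genuine integer by Corollary \ref{coro::specializationAcyclic}.

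The delicate point — and the main obstacle — is extracting \emph{strict} positivity for $f(CC_T(X))$ from this identity, since the coefficients $\chi(\cR_{\langle Y \rangle})$ are a priori only integers (possibly negative or zero), so the right-hand side is not manifestly a sum of positive terms. To handle this I would isolate the contribution of $E$ itself: in the triangle $X \to E \to T_n \to \Sigma X$, the object $T_n$ is a summand of $T$, and one checks (as in Palu's original multiplication formula, of which the $Y = E$ term is the distinguished one corresponding to the identity component) that $CC_T(E)$ appears with coefficient $1$; then $f'(\sigma(CC_T(E))) \geq 1$ by induction, so it suffices to show the remaining sum $\sum_Y \chi(\cR_{\langle Y\rangle}) f'(\sigma(CC_T(Y)))$ is $\geq 0$. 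For this I would invoke positivity of Euler characteristics: each $\cR_{\langle Y \rangle}$ is a constructible subset of a projective space stratifying $\bP \Hom{\cC}(M, \Sigma L)$, and more importantly the full identity of Theorem \ref{theo::main} with $V$ the whole space recovers Palu's formula, where the analogous coefficients are known (by the geometric interpretation via Grassmannians of submodules and the fact that these varieties have cells, or by the cluster-algebraic positivity results for acyclic quivers) to be non-negative. Concretely, I would either (a) appeal to the positivity theorem for cluster algebras of acyclic type — every cluster variable of $\cA_Q$ is a non-negative integer combination of cluster monomials in the initial cluster, hence $f$ applied to it is a non-negative integer, and strict positivity follows because the leading $x^{\ind{T}X}$ monomial survives with coefficient $1$ — which sidesteps Theorem \ref{theo::main} entirely for positivity and uses it only for well-definedness via Corollary \ref{coro::specializationAcyclic}; or (b) carry the induction through using that all $\chi(\Gr{e}(\cdot))$ and all $\chi(\cR_{\langle Y\rangle})$ arising are non-negative because the relevant varieties admit affine pavings in the acyclic case. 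Option (a) is cleaner and I would write the proof that way: the positivity of $f'$ together with the Laurent-positivity of cluster variables of $\cA_{Q'}$ in its initial cluster forces $f'$ to be $\geq 1$ on every cluster monomial, and since $\sigma(CC_T(X)) = \sigma(\text{a cluster variable of }\cA_Q)$ is a cluster variable of $\cA_{Q'}$ (by Corollary \ref{coro::specializationAcyclic}, as $\sigma$ sends clusters to clusters), we conclude $f(CC_T(X)) = f'(\sigma(CC_T(X))) \geq 1 > 0$. This shows $f$ is a frieze, and uniqueness is as above.
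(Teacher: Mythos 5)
Your final plan is essentially the paper's proof: set $f=f'\circ\sigma$, get well-definedness from Corollary~\ref{coro::specializationAcyclic}, note that uniqueness is forced by the values on the initial cluster, and obtain positivity from the positivity theorem rather than from Theorem~\ref{theo::main}. The long inductive detour through Theorem~\ref{theo::main} is not needed, and you rightly abandon it, since the integers $\chi(\cR_{\langle Y\rangle})$ have no a priori sign and the induction cannot be closed that way.

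However, the sentence in which you finally write up option (a) contains a genuine error: you claim that $\sigma(CC_T(X))$, i.e.\ the image under $\sigma$ of a cluster variable of $\cA_Q$, is a cluster variable of $\cA_{Q'}$, ``as $\sigma$ sends clusters to clusters'', citing Corollary~\ref{coro::specializationAcyclic}. That corollary only gives the equality of algebras $\sigma(\cA_Q)=\cA_{Q'}$; it does not say that cluster variables map to cluster variables, and in fact they do not. Already for $Q$ of type $A_2$ with $i$ the second vertex (so $Q'$ of type $A_1$, whose cluster variables are $x_1$ and $2/x_1$), the cluster variable $(1+x_1)/x_2$ specializes to $1+x_1$ and $(1+x_1+x_2)/(x_1x_2)$ specializes to $(2+x_1)/x_1$, neither of which is a cluster variable of $\cA_{Q'}$. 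So positivity cannot be obtained by applying $f'$ to cluster variables of $\cA_{Q'}$. The correct route is the one you state a sentence earlier, which is exactly the paper's argument: by the positivity theorem of Lee--Schiffler, every cluster variable $u$ of $\cA_Q$ is a Laurent polynomial $P(x_1,\dots,x_n)\,x^{-d}$ with $P$ having nonnegative integer coefficients; since $f(x_j)=f'(x_j)>0$ for $j\neq i$ and $f(x_i)=f'(1)=1$, clearing denominators gives $f(u)\prod_j f(x_j)^{d_j}=P\bigl(f(x_1),\dots,f(x_n)\bigr)>0$, hence $f(u)>0$. With that correction (and with the induction via Theorem~\ref{theo::main} deleted, its only role being hidden inside Corollary~\ref{coro::specializationAcyclic}), your proof coincides with the paper's.
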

\begin{proof}
 If~$f$ exists, then it is unique, since it is determined by its action on the initial cluster variables of~$\cA_Q$.  Let us prove that such an~$f$ exists for any frieze~$f'$.  By Corollary~\ref{coro::specializationAcyclic},~$f=f'\circ\sigma$ is a well-defined morphism of rings from~$\cA_Q$ to~$\bZ$.  We only need to check that all cluster variables of~$\cA_Q$ are sent to positive values by~$f$; this follows from the positivity theorem~\cite{LeeSchiffler}: any cluster variable of~$\cA_Q$ is a Laurent polynomial with nonnegative coefficients in the initial cluster variables, and these are sent to positive values by~$f$. 
\end{proof}

\begin{remark}
 Corollary~\ref{coro::reductionFriezes} can be deduced for friezes where~$Q$ is of type $A_n$ from the results of~\cite{ConwayCoxeter}, and was shown to be true in types~$A_n,D_n$ and~$E_6$ in \cite[Section 5]{BFGST}, where it was also observed to be true for all known friezes of types~$E_7$ and~$E_8$ by a direct check.  The total number of possible friezes in these types is still unknown and was conjectured in~\cite{FP16}.  
\end{remark}

\subsection{A formula for Auslander--Reiten triangles}\label{subs::AR}

In this section, we will show how Theorem \ref{theo::main} allows for a new proof of the following formula of S.~Dominguez and C.~Geiss when $\cC$ has constructible cones.

\begin{theorem}[Theorem 1 of \cite{DG12}]
  Let $\cC$ be a $\Hom{}$-finite 2-Calabi--Yau category with constructible cones and a cluster tilting object $T$.  Let $Z$ be an indecomposable object of $\cC$, and assume that it sits in an Auslander--Reiten triangle
\begin{displaymath}
  \Sigma Z \xrightarrow{\alpha} Y \xrightarrow{\beta} Z \xrightarrow{\varepsilon} \Sigma^2 Z.
\end{displaymath}
Then
\begin{displaymath}
  CC_{T}(Z)CC_{T}(\Sigma Z) = CC_{T}(Y) + 1.
\end{displaymath}
\end{theorem}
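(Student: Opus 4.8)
The idea is to apply the refined multiplication formula (Theorem~\ref{theo::main}) with $L = Z$, $M = \Sigma Z$, and $V$ the one-dimensional subspace of $\Hom{\cC}(Z, \Sigma^2 Z)$ spanned by the connecting morphism $\varepsilon$ of the Auslander--Reiten triangle. Since $Z$ is indecomposable and the triangle $\Sigma Z \to Y \to Z \xrightarrow{\varepsilon} \Sigma^2 Z$ is an Auslander--Reiten triangle, $\varepsilon$ is non-zero, so $V \neq 0$ and $\bP V$ is a single point, giving $\chi(\bP V) = 1$. The left-hand side of the formula is then exactly $CC_T(Z)CC_T(\Sigma Z)$, which is the quantity we want.

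\smallskip

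For the right-hand side, I would analyze the two sums separately. The first sum $\sum_{Y' \in \cY_{Z, \Sigma Z}} \chi(\bP V_{\langle Y' \rangle}) CC_T(Y')$ collapses because $\bP V$ is a single point $[\varepsilon]$: the only class $\langle Y' \rangle$ with non-empty $\bP V_{\langle Y' \rangle}$ is the one attached to the middle term of the triangle with connecting map $\varepsilon$, namely $Y$ itself, and for that class $\chi(\bP V_{\langle Y \rangle}) = 1$. So the first sum contributes exactly $CC_T(Y)$. For the second sum, I need to understand $\cR = \bP \Hom{\cC}(\Sigma Z, \Sigma^2 Z) \setminus \bP \Ker \beta_{Z, \Sigma Z}(V, ?)$. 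Here $\beta_{Z,\Sigma Z}(V, ?) = \beta_{Z, \Sigma Z}(\varepsilon, ?) : \Hom{\cC}(\Sigma Z, \Sigma^2 Z) \to k$ is a single linear functional; by non-degeneracy of $\beta$ it is non-zero (since $\varepsilon \neq 0$), so its kernel is a hyperplane and $\cR$ is the complement of a projective hyperplane in $\bP\Hom{\cC}(\Sigma Z, \Sigma^2 Z)$, i.e.\ an affine space $\mathbb{A}^{m-1}$ where $m = \dim \Hom{\cC}(\Sigma Z, \Sigma^2 Z) = \dim\End{\cC}(Z)$ (using $2$-Calabi--Yau). I'd then need to identify $\cR_{\langle Y' \rangle}$ for each class: the claim will be that the only class contributing is $Y' = 0$, with $CC_T(0) = 1$ and $\chi(\cR_{\langle 0 \rangle}) = 1$.

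\smallskip

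The key input making the second sum collapse to $1$ is the defining property of an Auslander--Reiten triangle: a morphism $\eta \in \Hom{\cC}(\Sigma Z, \Sigma^2 Z) = \Hom{\cC}(\Sigma Z, \Sigma \cdot \Sigma Z)$, viewed as a connecting map $\Sigma Z \to Y'' \to \Sigma Z \xrightarrow{\eta} \Sigma^2 Z$, has $Y'' \cong 0$ (the split triangle) precisely when $\eta$ is... no --- rather, the relevant triangle here is $\Sigma Z \to Y'' \to \Sigma Z$ and one wants to detect when the associated extension is split. The crucial observation is that $\beta_{Z,\Sigma Z}(\varepsilon, \eta) \neq 0$ should force the middle term to be $0$: by the $2$-Calabi--Yau and AR-duality, $\beta(\varepsilon, \eta) \neq 0$ means $\eta$ is a scalar multiple modulo the kernel of a ``trace-like'' form dual to $\varepsilon$, and since $\varepsilon$ generates the socle of $\Hom{\cC}(Z, \Sigma^2 Z)$ as an $\End{\cC}(Z)$-module (the AR property), the only $\eta$ pairing non-trivially with $\varepsilon$... this needs care. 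Concretely, I expect the argument to run: for $[\eta] \in \cR$ the triangle $\Sigma Z \to Y'' \to \Sigma Z \xrightarrow{\eta} \Sigma^2 Z$ has $\eta \neq 0$ so it is non-split, but more is true --- one shows $Y'' = 0$ is \emph{impossible} unless $\eta = 0$, hence all $[\eta] \in \cR$ give... Wait, that is backwards. Let me restate: $Y'' = 0$ forces $\eta$ to be an isomorphism $\Sigma Z \xrightarrow{\sim} \Sigma^2 Z$... but that only happens in degenerate cases. So in fact \emph{no} $[\eta]$ gives $Y'' = 0$, and the contributing classes have $CC_T(Y'') \neq 1$ in general. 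This means the naive guess is wrong and the $+1$ must come from the \emph{first} sum via $CC_T(Y)$ already absorbing things, or from a cancellation. I would resolve this by comparing against the known specialization $CC_T(\Sigma Z) = CC_T(Z)$-type identities and the fact that $\chi(\cR_{\langle Y''\rangle})$ sums, over all classes, to $\chi(\cR) = 1$ (an affine space), while $CC_T(Y'') = 1$ for every $Y''$ that actually occurs --- this last point, that every middle term $Y''$ of a triangle $\Sigma Z \to Y'' \to \Sigma Z$ with $[\eta] \in \cR$ has cluster character equal to $1$, is what I expect to be the real content, and it should follow from the AR property together with an index/Grassmannian computation showing such $Y''$ is always a projective (zero object in $\underline{\cC}$) or has $CC_T = 1$. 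Assembling: second sum $= \sum_{Y''} \chi(\cR_{\langle Y'' \rangle}) \cdot 1 = \chi(\cR) = 1$.

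\smallskip

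\textbf{Main obstacle.} The hard part will be pinning down the second sum: showing that for every object $Y''$ occurring as the middle term of a triangle $\Sigma Z \to Y'' \to \Sigma Z \xrightarrow{\eta} \Sigma^2 Z$ with $\eta \notin \Ker \beta_{Z,\Sigma Z}(\varepsilon, ?)$, one has $CC_T(Y'') = 1$, so that the weighted sum telescopes to $\chi(\cR) = \chi(\mathbb{A}^{m-1}) = 1$. This requires extracting from the Auslander--Reiten property exactly which extensions of $\Sigma Z$ by $\Sigma Z$ pair non-trivially with $\varepsilon$ under $\beta$, and checking their middle terms have trivial cluster character (equivalently, are projective-injective in the relevant sense, i.e.\ zero in the stable-type quotient, or have index $0$ and point Grassmannians). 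Everything else --- that $\chi(\bP V) = 1$, that the first sum is $CC_T(Y)$, that $\cR$ is an affine space --- is routine once the formula is invoked.
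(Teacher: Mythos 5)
Your setup, the treatment of the left-hand side, and the collapse of the first sum to $CC_{T}(Y)$ all match the paper's argument. The gap is in the second sum, and it starts with a misidentification of the space in which $\cR$ lives: with $L=Z$, $M=\Sigma Z$, the form is $\beta_{Z,\Sigma Z}\colon \Hom{\cC}(Z,\Sigma^2 Z)\times\Hom{\cC}(\Sigma Z,\Sigma Z)\to k$, so $\cR\subset\bP\Hom{\cC}(\Sigma Z,\Sigma Z)=\bP\End{\cC}(\Sigma Z)$, not $\bP\Hom{\cC}(\Sigma Z,\Sigma^2 Z)$ as you wrote (your claim $\dim\Hom{\cC}(\Sigma Z,\Sigma^2 Z)=\dim\End{\cC}(Z)$ is a symptom of the same confusion). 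Correspondingly, the triangles indexed by the second sum are $Z\to Y'\to\Sigma Z\xrightarrow{\ \eta\ }\Sigma Z$ with $\eta$ an \emph{endomorphism} of $\Sigma Z$, not self-extensions $\Sigma Z\to Y''\to\Sigma Z\to\Sigma^2 Z$. Because of this, you talked yourself out of exactly the right statement: for $\eta\in\End{\cC}(\Sigma Z)$ invertible the cone is $Y'=0$, and the whole point is that $\cR$ consists precisely of the classes of isomorphisms. Your replacement strategy --- that every middle term occurring in $\cR$ has $CC_{T}=1$ so the sum telescopes to $\chi(\cR)$ --- is unproven, is false in the setup you describe (e.g.\ $\eta=0$ gives middle term with nontrivial character), and with the corrected space is essentially equivalent to the statement still to be established, which you explicitly defer as the ``main obstacle.''

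The missing argument, which is the actual content of the paper's proof, runs as follows. Since $Z$ is indecomposable, $\End{\cC}(\Sigma Z)$ is local, so $\eta$ is an isomorphism iff $\eta\notin\rad\End{\cC}(\Sigma Z)$. If $\eta$ lies in the radical it is not a retraction, so the defining property of the Auslander--Reiten triangle gives $f\colon Z\to Y$ with $\Sigma^{-1}\eta=\beta f$, and bifunctoriality yields $\beta_{Z,\Sigma Z}(\varepsilon,\eta)=\beta_{Y,\Sigma Z}(\varepsilon\beta,\Sigma f)=0$ because $\varepsilon\beta=0$. Conversely, if $\eta$ is an isomorphism then $\eta$ together with the radical spans $\End{\cC}(\Sigma Z)$, so $\beta_{Z,\Sigma Z}(\varepsilon,\eta)=0$ would force $\beta_{Z,\Sigma Z}(\varepsilon,-)$ to vanish identically, contradicting non-degeneracy. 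Hence $\cR=\bP\End{\cC}(\Sigma Z)\setminus\bP\rad\End{\cC}(\Sigma Z)=\cR_{\langle 0\rangle}$, which is an affine space (the radical is a hyperplane), so the second sum equals $\chi(\cR_{\langle 0\rangle})\,CC_{T}(0)=1$. Until you supply this identification of $\cR$ with the isomorphisms --- using the AR lifting property and non-degeneracy of $\beta$ --- the proof is incomplete.
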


We will give a proof of this theorem under the additionnal assumption that \emph{$\cC$ has constructible cones}.

Let $V$ be the one-dimensional subspace of $\Hom{\cC}(Z, \Sigma^2 Z)$ generated by $\varepsilon$.  Applying Theorem \ref{theo::main}, we get
\begin{displaymath}
  CC_{T}(Z)CC_{T}(\Sigma Z) = CC_{T}(Y) + \sum_{E\in \cY_{\Sigma Z, Z}}\chi(\cR_{\langle E \rangle})CC_{T}(E).
\end{displaymath}
Here $\cR = \{[\eta]\in\bP \Hom{\cC}(\Sigma Z, \Sigma Z) \ | \ \beta_{Z, \Sigma Z}(\varepsilon, \eta) \neq 0 \}$.  Let us show that  $[\eta]$ lies in $\cR$ if and only if $\eta$ is an isomorphism.

Since $Z$ is indecomposable, $\Hom{\cC}(\Sigma Z, \Sigma Z)$ is a local ring.  Hence $\eta$ is an isomorphism if and only if it does not lie in the radical of $\Hom{\cC}(\Sigma Z, \Sigma Z)$.  Thus we need to show that $\eta$ lies in the radical of $\Hom{\cC}(\Sigma Z, \Sigma Z)$ if and only if $\beta_{Z, \Sigma Z}(\varepsilon, \eta)=0$.

Assume that $\eta$ is in the radical (assume $\eta \neq 0$; the case $\eta=0$ is trivial).  Then it is not an isomorphism, and since $Z$ is indecomposable, it is not a retraction.   Then, by definition of an Auslander--Reiten triangle, we must have that there exists a morphism $f:Z\rightarrow Y$ such that $\Sigma^{-1}\eta = \beta f$.  But then
\begin{eqnarray*}
  \beta_{Z, \Sigma Z}(\varepsilon, \eta) & = & \beta_{Z, \Sigma Z}(\varepsilon, \Sigma \beta \Sigma f) \\
                                         & = & \beta_{Y, \Sigma Z}(\varepsilon\beta, \Sigma f) \\
                                         & = & \beta_{Y, \Sigma Z}(0, \Sigma f) \\
                                         & = & 0.
\end{eqnarray*} 

Assume next that $\eta$ is an isomorphism.  Then $\eta$ and~$\rad \Hom{\cC}(\Sigma Z, \Sigma Z)$ generate $\Hom{\cC}(\Sigma Z, \Sigma Z)$ as a vector space. If $\beta_{Z, \Sigma Z}(\varepsilon, \eta)$ were to vanish, it would thus vanish for any $\eta'$ in $\Hom{\cC}(\Sigma Z, \Sigma Z)$, contradicting the fact that $\beta_{Z, \Sigma Z}$ is non-degenerate.  Thus $\beta_{Z, \Sigma Z}(\varepsilon, \eta)\neq 0$.

This proves that $\cR$ is the set of $[\eta]$, with $\eta$ an isomorphism.  But then $\cR = \cR_{\langle 0 \rangle}$ (since the middle term of a triangle associated with an isomorphism is 0).  Moreover, $\cR = \bP \Hom{\cC}(\Sigma Z, \Sigma Z) \setminus \bP\! \rad\Hom{\cC}(\Sigma Z, \Sigma Z)$ is an affine space, since $\rad\Hom{\cC}(\Sigma Z, \Sigma Z)$ is a hyperplane in $\Hom{\cC}(\Sigma Z, \Sigma Z)$.  Thus $\chi(\cR)=1$.

Therefore
\begin{eqnarray*}
  CC_{T}(Z)CC_{T}(\Sigma Z) &=& CC_{T}(Y) + \sum_{E\in \cY_{Z, \Sigma Z}}\chi(\cR_{\langle E \rangle})CC_{T}(E) \\
                    &=& CC_{T}(Y) + \chi(\cR_{\langle 0 \rangle})CC_{T}(0) \\
                    &=& CC_{T}(Y) + 1.
\end{eqnarray*}
This finishes the proof.

\subsection{Another restricted formula}\label{subs::DX10}
Theorem \ref{theo::main} allows us to obtain (always assuming contructibility of cones) the following formula, reminiscent of the one stated in \cite{DX10}.  For two objects $L$ and $M$ of $\cC$, let $(T)(L,M)$ be the space of morphisms from $L$ to $M$ factoring through an object of $\add T$.

\begin{proposition}\label{prop::DX10}
 Under the hypotheses of Theorem \ref{theo::main}, we have that
 \begin{IEEEeqnarray*}{rCl}
  \chi\big(\bP(T)(L,\Sigma M)\big)CC_{T}(L)CC_{T}(M)  &=& \sum_{Y\in \cY_{L,M}}\chi\big(\bP(T)(L,\Sigma M)_{\langle Y \rangle}\big) CC_{T}(Y) \\
     && \hspace{-3cm} +\: \sum_{Y\in \cY_{M,L}}\chi\big(\bP\Hom{\cC}(M,\Sigma L)_{\langle Y\rangle} \setminus \bP(T)(M,\Sigma L)_{\langle Y\rangle} \big) CC_{T}(Y).
 \end{IEEEeqnarray*}
\end{proposition}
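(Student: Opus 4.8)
The plan is to apply Theorem~\ref{theo::main} with $V = (T)(L,\Sigma M)$, the space of morphisms $L \to \Sigma M$ that factor through an object of $\add T$. First I need to check that this is indeed a vector subspace of $\Hom{\cC}(L,\Sigma M)$: it is the image of the natural composition map $\bigoplus_j \Hom{\cC}(L,T_j') \otimes \Hom{\cC}(T_j',\Sigma M) \to \Hom{\cC}(L,\Sigma M)$ ranging over summands of $T$, hence a subspace, and it is nonzero precisely when $\Hom{\cC}(L,\Sigma M)$ meets the ideal $(T)$ nontrivially; if this space is zero there is nothing to prove, so assume $V \neq 0$. Plugging this $V$ into Theorem~\ref{theo::main} immediately gives the left-hand side and the first sum on the right-hand side of the proposition verbatim, as well as the second sum with coefficient $\chi(\cR_{\langle Y\rangle})$ where $\cR = \bP\Hom{\cC}(M,\Sigma L) \setminus \bP\Ker\beta_{L,M}(V,?)$.

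The heart of the argument is therefore to identify $\Ker\beta_{L,M}(V,?)$ with the subspace $(T)(M,\Sigma L)$ of $\Hom{\cC}(M,\Sigma L)$. This is where the $2$-Calabi--Yau property interacts with the fact that $T$ is cluster-tilting. Concretely, I would show that for $\eta \in \Hom{\cC}(M,\Sigma L)$ one has $\beta_{L,M}(\varepsilon,\eta) = 0$ for every $\varepsilon \in (T)(L,\Sigma M)$ if and only if $\eta$ factors through $\add T$. For the ``if'' direction: if $\eta = \beta' \circ \alpha$ with $\alpha: M \to \Sigma T_j'$ and... — more carefully, write $\eta$ as a composite $M \xrightarrow{a} \Sigma^{-1}(\Sigma T_j') = T_j' \xrightarrow{} \cdots$; using bifunctoriality of $\beta$ one moves the factor through $T_j$ across the pairing and is left with $\beta_{?,?}(?, ?)$ evaluated on a morphism landing in or emanating from $\add T$, which vanishes because $\Hom{\cC}(T,\Sigma T) = 0$ (rigidity of $T$). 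For the ``only if'' direction: the pairing $\beta_{L,M}$ is non-degenerate, so $\Ker\beta_{L,M}(V,?)$ has dimension $\dim\Hom{\cC}(M,\Sigma L) - \dim V$; it then suffices to check that $\dim (T)(M,\Sigma L) = \dim\Hom{\cC}(M,\Sigma L) - \dim (T)(L,\Sigma M)$, i.e. that $(T)(L,\Sigma M)$ and $(T)(M,\Sigma L)$ have complementary dimensions under $\beta$. This is exactly the statement that the pairing $\beta$ induces a \emph{non-degenerate} pairing between $\underline{\Hom}_{\cC}(L,\Sigma M) = \Hom{\cC}(L,\Sigma M)/(T)(L,\Sigma M)$ and $\underline{\Hom}_{\cC}(M,\Sigma L)/(T)(M,\Sigma L)$ — a standard consequence of the $2$-Calabi--Yau property and rigidity of $T$ (one such argument: $(T)(L,\Sigma M)$ is the image of $\Hom{\cC}(L,T_0) \to \Hom{\cC}(L,\Sigma M)$ where $T_0 \to \Sigma M$... and $\Hom{\cC}(L, T_0)$ pairs with $\Hom{\cC}(?, \Sigma L)$-type data that vanishes on $\add T$). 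Once the kernel is identified, $\cR = \bP\Hom{\cC}(M,\Sigma L)\setminus \bP(T)(M,\Sigma L)$, and intersecting with the cell $\bP\Hom{\cC}(M,\Sigma L)_{\langle Y\rangle}$ gives $\cR_{\langle Y\rangle} = \bP\Hom{\cC}(M,\Sigma L)_{\langle Y\rangle} \setminus \bP(T)(M,\Sigma L)_{\langle Y\rangle}$, which is precisely the coefficient appearing in the proposition.

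I expect the main obstacle to be the clean verification that $\Ker\beta_{L,M}\big((T)(L,\Sigma M),?\big) = (T)(M,\Sigma L)$, since it requires carefully tracking the bifunctoriality identities of $\beta$ through a factorization $\varepsilon = v \circ u$ with $u: L \to T^{(k)}$, $v: T^{(k)} \to \Sigma M$ (for $T^{(k)} \in \add T$) and deducing $\beta_{L,M}(\varepsilon,\eta) = \beta_{T^{(k)}, M}(v, u \text{-shifted composite with } \eta)$, then recognizing the resulting pairing as a map out of $\Hom{\cC}(T^{(k)}, \Sigma(\text{something in }\add T))$ which is zero by rigidity; combined with a dimension count via non-degeneracy of $\beta$ this forces equality. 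The remaining bookkeeping — that $V = (T)(L,\Sigma M)$ is a genuine subspace, that the cellular decomposition restricts correctly, and that the $\langle Y\rangle$-subscripts match up — is routine given the machinery already set up for Theorem~\ref{theo::main}.
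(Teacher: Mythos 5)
Your overall route is the same as the paper's: apply Theorem~\ref{theo::main} with $V=(T)(L,\Sigma M)$ and reduce everything to the identification $\Ker\beta_{L,M}(V,?)=(T)(M,\Sigma L)$, proving the inclusion $(T)(M,\Sigma L)\subseteq\Ker\beta_{L,M}(V,?)$ by rigidity of $T$ plus bifunctoriality, and the reverse inclusion by a dimension count using non-degeneracy of $\beta$. These are exactly the paper's two steps.

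There is, however, a genuine gap at the dimension count, which is the heart of the matter. You correctly reduce to the identity $\dim (T)(M,\Sigma L)=\dim\Hom{\cC}(M,\Sigma L)-\dim (T)(L,\Sigma M)$, but then assert that this ``is exactly the statement that $\beta$ induces a non-degenerate pairing between $\Hom{\cC}(L,\Sigma M)/(T)(L,\Sigma M)$ and $\Hom{\cC}(M,\Sigma L)/(T)(M,\Sigma L)$.'' That is not the right statement: $\beta_{L,M}$ does not descend to a pairing of the two stable quotients at all (well-definedness would require $\beta_{L,M}(t,\eta)=0$ for every $t\in(T)(L,\Sigma M)$ and \emph{every} $\eta$, which by non-degeneracy would force $(T)(L,\Sigma M)=0$), and even read purely numerically such a statement would only yield $\dim (T)(L,\Sigma M)=\dim (T)(M,\Sigma L)$, not the complementarity you need. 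The fact actually required --- and the one the paper invokes, namely \cite[Lemma 3.3]{Palu08} --- is that $\beta$ pairs the ideal on one side perfectly against the stable quotient on the other side: $\Hom{\cC}(L,\Sigma M)/(T)(L,\Sigma M)\cong D\big((T)(M,\Sigma L)\big)$, equivalently $(T)(L,\Sigma M)$ pairs non-degenerately with $\Hom{\cC}(M,\Sigma L)/(T)(M,\Sigma L)$. Your parenthetical sketch of a proof of this duality trails off and does not establish it, so as written the crucial step is unproved and the statement offered in its place is false. Once the correct duality is quoted (or proved), the rest of your argument goes through; note also that your ``nothing to prove'' dismissal of the case $(T)(L,\Sigma M)=0$ secretly uses the same duality, since one needs $(T)(M,\Sigma L)=\Hom{\cC}(M,\Sigma L)$ in that case to see that the second sum on the right-hand side vanishes.
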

\demo{This follows from Theorem \ref{theo::main} by taking $V=(T)(L,\Sigma M)$. To see this, we only need to prove that $\Ker \beta_{L,M}(V,?)=(T)(M,\Sigma L)$.

Notice first that $(T)(M,\Sigma L)$ is contained in $\Ker \beta_{L,M}(V,?)$; indeed, if $f\in V$ and $g\in (T)(M,\Sigma L)$, then $\Sigma g\circ f=0$ (since $T$ is rigid), so 
\begin{displaymath}
 \beta_{L,M}(f,g) = \beta_{L,\Sigma L}(\Sigma g\circ f, id_{\Sigma L}) = 0.
\end{displaymath}

Moreover, $\dim \Ker \beta_{L,M}(V,?) = \dim \Hom{\cC}(L,\Sigma M)/V$, and this last vector space is isomorphic to the dual of $(T)(M, \Sigma L)$ thanks to \cite[Lemma 3.3]{Palu08}.  Thus $\Ker \beta_{L,M}(V,?)$ and $(T)(M,\Sigma L)$ have the same (finite) dimension, and so they are equal.
}

\section*{Acknowledgements}
The first and second authors were supported by the French ANR grant CHARMS (ANR-19-CE40-0017-02).  The second author was supported by the Institut Universitaire de France (IUF).  The third author was supported by the  National Natural Science Foundation of China (Grant No. 12271347).  The final stages of this project were completed while the first and second author were participating in a trimester programme at the Isaac Newton Institute.  The authors would like to thank the Isaac Newton Institute for Mathematical Sciences, Cambridge, for support and hospitality during the programme \emph{Cluster algebras and representation theory} where work on this paper was undertaken. This work was supported by EPSRC grant no EP/R014604/1.  

We would like to thank Karin Baur, Eleonore Faber, Ana Garcia Elsener, Alastair King, Matthew Pressland and Khrystyna Serhiyenko for discussions about applications to cluster algebras and friezes.

\bibliographystyle{alpha} 
\bibliography{cluster.bib}

\end{document}